\definecolor{indigo}{HTML}{492DA5}
\providecommand{\noopsort}[1]{}
\newenvironment{acknowledgements}{\begin{quotation}\footnotesize{\textsc{Acknowledgements.}}}{\end{quotation}}
\g@addto@macro\bfseries{\boldmath}\makeatother
\let\origsection\section
\renewcommand\section{\@ifstar{\starsection}{\nostarsection}}
\newcommand\sectionspace{\vspace{0.5ex}}
\newcommand\nostarsection[1]{\sectionspace\origsection{#1}\sectionspace}
\newcommand\starsection[1]{\sectionspace\origsection*{#1}\sectionspace}
\setlist[enumerate]{font=\normalfont}
\crefname{enumi}{}{}
\crefname{enumii}{}{}
\crefname{condition}{condition}{conditions}
\newcommand\numberthis{\addtocounter{equation}{1}\tag{\theequation}}
\numberwithin{equation}{section}
\crefname{equation}{equation}{equations}
\newtheorem{theorem}{Theorem}[section]
\newtheorem{thm}[theorem]{Theorem}
\crefname{thm}{Theorem}{Theorems}
\newtheorem{lemma}[theorem]{Lemma}
\crefname{lemma}{Lemma}{Lemmas}
\newtheorem{prop}[theorem]{Proposition}
\crefname{prop}{Proposition}{Propositions}
\newtheorem{cor}[theorem]{Corollary}
\crefname{cor}{Corollary}{Corollaries}
\theoremstyle{definition}
\newtheorem{definition}[theorem]{Definition}
\crefname{definition}{Definition}{Definitions}
\theoremstyle{remark}
\newtheorem{remark}[theorem]{Remark}
\crefname{remark}{Remark}{Remarks}
\newtheorem{remarks}[theorem]{Remarks}
\crefname{remarks}{Remarks}{Remarks}
\newtheorem{example}[theorem]{Example}
\crefname{example}{Example}{Examples}
\newcommand{\hl}[1]{\textcolor{magenta}{\emph{#1}}}
\newcommand{\C}{\mathbb{C}}
\newcommand{\F}{\mathbb{F}}
\newcommand{\T}{\mathbb{T}}
\newcommand{\UU}{\mathcal{U}}
\newcommand{\Go}{{G^{(0)}}}
\newcommand{\Gc}{{G^{(2)}}}
\newcommand{\Sigmao}{{\Sigma^{(0)}}}
\newcommand{\Sigmac}{{\Sigma^{(2)}}}
\newcommand{\KP}{\operatorname{KP}}
\newcommand{\Rspan}{\operatorname{span}_R}
\newcommand{\supp}{\operatorname{supp}}
\newcommand{\id}{\operatorname{id}}
\newcommand{\Iso}{\operatorname{Iso}}
\newcommand{\lav}{\ensuremath{\lvert}}
\newcommand{\rav}{\ensuremath{\rvert}}
\newcommand{\lv}{\ensuremath{\lVert}}
\newcommand{\rv}{\ensuremath{\rVert}}
\newcommand{\medcap}{\mathbin{\scalebox{1.2}{\ensuremath{\cap}}}}
\newcommand{\medcup}{\mathbin{\scalebox{1.2}{\ensuremath{\cup}}}}
\newcommand{\stimesr}{\ensuremath{{}_s{\times}_r\,}}
\newcommand{\restr}[1]{\ensuremath{\vert_{#1}}}
\title[Twisted Steinberg algebras]{Twisted Steinberg algebras}
\author[Armstrong]{Becky Armstrong}
\author[Clark]{Lisa Orloff Clark}
\author[Courtney]{Kristin Courtney}
\author[Lin]{Ying-Fen Lin}
\author[McCormick]{Kathryn McCormick}
\author[Ramagge]{Jacqui Ramagge}
\address[B.\ Armstrong and K.\ Courtney]{Mathematical Institute, WWU M\"unster, Einsteinstr.\ 62, 48149 M\"unster, GERMANY}
\email{\href{mailto:becky.armstrong@uni-muenster.de}{becky.armstrong}, \href{mailto:kcourtne@uni-muenster.de}{kcourtne@uni-muenster.de}}
\address[L.O.\ Clark]{School of Mathematics and Statistics, Victoria University of Wellington, PO Box 600, Wellington 6140, NEW ZEALAND}
\email{\href{mailto:lisa.clark@vuw.ac.nz}{lisa.clark@vuw.ac.nz}}
\address[Y.-F.\ Lin]{Mathematical Sciences Research Centre, Queen's University Belfast, Belfast, BT7 1NN, UNITED KINGDOM}
\email{\href{mailto:y.lin@qub.ac.uk}{y.lin@qub.ac.uk}}
\address[K.\ McCormick]{Department of Mathematics and Statistics, California State University, Long Beach, CA, UNITED STATES}
\email{\href{mailto:kathryn.mccormick@csulb.edu}{kathryn.mccormick@csulb.edu}}
\address[J.\ Ramagge]{Faculty of Science, Durham University, Durham, DH1 3LE, UNITED KINGDOM}
\email{\href{mailto:jacqui.ramagge@durham.ac.uk}{jacqui.ramagge@durham.ac.uk}}
\date{\today}
\subjclass[2020]{16S99 (primary), 22A22 (secondary).}
\keywords{Steinberg algebra, topological groupoid, cohomology, graded algebra.}
\begin{document}

\begin{abstract}
We introduce twisted Steinberg algebras over a commutative unital ring $R$. These generalise Steinberg algebras and are a purely algebraic analogue of Renault's twisted groupoid C*-algebras. In particular, for each ample Hausdorff groupoid $G$ and each locally constant $2$-cocycle $\sigma$ on $G$ taking values in the units $R^\times$, we study the algebra $A_R(G,\sigma)$ consisting of locally constant compactly supported $R$-valued functions on $G$, with convolution and involution ``twisted'' by $\sigma$. We also introduce a ``discretised'' analogue of a twist $\Sigma$ over a Hausdorff \'etale groupoid $G$, and we show that there is a one-to-one correspondence between locally constant $2$-cocycles on $G$ and discrete twists over $G$ admitting a continuous global section. Given a discrete twist $\Sigma$ arising from a locally constant $2$-cocycle $\sigma$ on an ample Hausdorff groupoid $G$, we construct an associated twisted Steinberg algebra $A_R(G;\Sigma)$, and we show that it coincides with $A_R(G,\sigma^{-1})$. Given any discrete field $\F_d$, we prove a graded uniqueness theorem for $A_{\F_d}(G,\sigma)$, and under the additional hypothesis that $G$ is effective, we prove a Cuntz--Krieger uniqueness theorem and show that simplicity of $A_{\F_d}(G,\sigma)$ is equivalent to minimality of $G$.
\end{abstract}

\maketitle
\vspace{-0.5ex}

\section{Introduction}

Steinberg algebras have become a topic of great interest for algebraists and analysts alike since their independent introduction in \cite{Steinberg2010} and \cite{CFST2014}. Before Steinberg algebras were specified by name, they appeared in the details of many constructions of groupoid C*-algebras, such as those in \cite{Exel2008, KP2000, KPRR1997, Paterson1999}. Not only have these algebras provided useful insight into the analytic theory of groupoid C*-algebras, they give rise to interesting examples of $*$-algebras; for example, all Leavitt path algebras and Kumjian--Pask algebras can be realised as Steinberg algebras. Moreover, Steinberg algebras have served as a bridge to facilitate the transfer of concepts and techniques between the algebraic and analytic settings; see \cite{BCFS2014} for one such case.

Thirty years prior to the introduction of Steinberg algebras, Renault \cite{Renault1980} initiated the study of twisted groupoid C*-algebras. These are a generalisation of groupoid C*-algebras in which multiplication and involution are twisted by a $\T$-valued $2$-cocycle on the groupoid. Twisted groupoid C*-algebras have since proved extremely valuable in the study of structural properties for large classes of C*-algebras. In particular, work of Renault \cite{Renault2008}, Tu \cite{Tu1999}, and Barlak and Li \cite{BL2017} has revealed deep connections between twisted groupoid C*-algebras and the UCT problem from the classification program for C*-algebras. For more work on twisted C*-algebras associated to graphs and groupoids, see \cite{AB2018, Boenicke2021, CaHR2013, Gillaspy2015, Kumjian1986, KPS2012, KPS2013, KPS2015, KPS2016, Li2019, SWW2014}.

Given the success of non-twisted Steinberg algebras and the far-reaching significance of C*-algebraic results relating to twisted groupoid C*-algebras, we expect that a purely algebraic analogue of twisted groupoid C*-algebras will supply several versatile classes of $*$-algebras to the literature, as well as a new avenue to approach important problems in C*-algebras.

Throughout, let $R$ be a discrete commutative unital ring with units $R^\times$. Let $\C_d$ denote the ring of complex numbers endowed with the discrete topology, and let $\T_d$ denote the complex unit circle endowed with the discrete topology. In this article, we introduce the notion of a \emph{twisted Steinberg algebra} $A_R(G,\sigma)$ constructed from an ample Hausdorff groupoid $G$ and a locally constant $R^\times$-valued $2$-cocycle $\sigma$ on $G$. Our construction generalises the Steinberg algebra $A_R(G)$, and provides a purely algebraic analogue of the twisted groupoid C*-algebra $C^*(G,\sigma)$ in the case where $R = \C_d$.

In the non-twisted setting, the complex Steinberg algebra and the C*-algebra associated to an ample Hausdorff groupoid $G$ are both built from the convolution algebra $C_c(G)$ of continuous compactly supported complex-valued functions on $G$. The complex Steinberg algebra $A(G)$ is the $*$-subalgebra of $C_c(G)$ consisting of locally constant functions, and the full (or reduced) groupoid C*-algebra $C^*(G)$ (or $C_r^*(G)$) is the closure of $C_c(G)$ with respect to the full (or reduced) C*-norm (see \cite[Chapter~9]{Sims2020}). It turns out (see \cite[Proposition~4.2]{CFST2014}) that $A(G)$ sits densely inside of both the full and the reduced C*-algebras. Therefore, the definition of a twisted complex Steinberg algebra should result in the same inclusions; that is, the twisted complex involutive Steinberg algebra should sit $*$-algebraically and densely inside the twisted groupoid C*-algebra. However, to even make sense of that goal, one must first choose between two methods of constructing a twisted groupoid C*-algebra. The first involves twisting the multiplication on $C^*(G)$ by a continuous $\T$-valued $2$-cocycle $\sigma$, and was introduced by Renault in \cite{Renault1980}.

In \cite{Renault1980}, Renault also observed that the structure of a twisted groupoid C*-algebra with multiplication incorporating a $2$-cocycle $\sigma$ could be realised instead by first twisting the groupoid itself, and then constructing an associated C*-algebra. This is achieved by forming a split groupoid extension
\[
\Go \times \T \hookrightarrow G \times_\sigma \T \twoheadrightarrow G,
\]
where multiplication and inversion on the groupoid $G \times_\sigma \T$ both incorporate a $\T$-valued $2$-cocycle $\sigma$ on $G$, and then defining the twisted groupoid C*-algebra to be the completion of the algebra of $\T$-equivariant functions on $C_c(G \times \T)$ under a C*-norm. A few years later, while developing a C*-analogue of Feldman--Moore theory, Kumjian \cite{Kumjian1986} observed the need for a more general construction arising from a locally split groupoid extension
\[
\Go \times \T\hookrightarrow \Sigma \twoheadrightarrow G,
\]
where $\Sigma$ is not necessarily homeomorphic to $G \times \T$. It turns out that when $G$ is a second-countable ample Hausdorff groupoid, a folklore result (\cref{thm: folklore}) tells us that every twist over $G$ does arise from a $\T$-valued $2$-cocycle on $G$.

Therefore, our first task is to define twisted Steinberg algebras with respect to both notions of a twist. This is the focus of \cref{section: TSGs from 2-cocycles,section: discrete twists}. In \cref{section: TSGs from 2-cocycles}, we define the twisted Steinberg algebra $A_R(G,\sigma)$ by taking an ample Hausdorff groupoid $G$ and twisting the multiplication of the classical Steinberg algebra $A_R(G)$ using a \emph{locally constant} $R^\times$-valued $2$-cocycle $\sigma$ on $G$. We then show that $A_{\C_d}(G,\sigma)$ sits densely inside the twisted groupoid C*-algebra $C^*(G,\sigma)$. In \cref{section: Steinberg algebras from twists}, we give an alternative construction of a twisted Steinberg algebra built using a twist $\Sigma$ over $G$, and then verify that these two definitions of twisted Steinberg algebras agree when the twist over $G$ arises from a $2$-cocycle.

In order to construct a twisted complex Steinberg algebra using a twist over a groupoid, we are forced to first ``discretise" our groupoid extension by replacing the standard topology on $\T$ with the discrete topology. Though this may seem a little artificial to a C*-algebraist, this change is indeed necessary, as we explain in \cref{remarks: why discretise}. (Nonetheless, this should not come as too much of a surprise, given the purely algebraic nature of Steinberg algebras.) Thus, \cref{section: discrete twists over Hausdorff etale groupoids} is dedicated to introducing these discretised groupoid twists and establishing in this setting the aforementioned folklore result for an arbitrary commutative unital ring $R$ (\cref{thm: folklore}). Then in \cref{section: twists and 2-cocycles}, we flesh out the relationships between these twists over groupoids and the cohomology theory of groupoids.

\Cref{section: examples} provides several examples of twisted Steinberg algebras, including a notion of \emph{twisted Kumjian--Pask algebras}. The final two sections of the paper are devoted to proving several important results in Steinberg algebras in the twisted setting, when $R$ is a (discrete) field. In \cref{section: uniqueness and simplicity} we prove a twisted version of the Cuntz--Krieger uniqueness theorem for effective groupoids (\cref{thm: CK uniqueness}), and we show that when $R$ is a discrete field and $G$ is effective, simplicity of $A_R(G,\sigma)$ is equivalent to minimality of $G$ (\cref{thm: simplicity}). Finally, in \cref{section: gradings}, we show that twisted Steinberg algebras inherit a graded structure from the underlying groupoid, and we prove a graded uniqueness theorem for twisted Steinberg algebras (\cref{thm: graded uniqueness}).

\section{Preliminaries} \label{section: prelim}

In this section we introduce some notation, and we recall relevant background information on topological groupoids, continuous $2$-cocycles, and twisted groupoid C*-algebras. Throughout this article, $G$ will always be a locally compact Hausdorff topological groupoid with composable pairs $\Gc \subseteq G \times G$, range and source maps $r, s\colon G \to G$, and unit space $\Go \coloneqq r(G) = s(G)$. We will refer to such groupoids as \hl{Hausdorff groupoids}. For all $\gamma \in G$, we have $r(\gamma) = \gamma \gamma^{-1}$ and $s(\gamma) = \gamma^{-1} \gamma$, where multiplication (or composition) of groupoid elements is evaluated from right to left. We write $G^{(3)}$ for the set of composable triples in $G$; that is,
\[
G^{(3)} \coloneqq \{ (\alpha,\beta,\gamma) \,:\, (\alpha,\beta), \, (\beta,\gamma) \in \Gc \}.
\]
For each $x \in \Go$, we define
\[
G_x \coloneqq s^{-1}(x), \quad G^x \coloneqq r^{-1}(x), \quad \text{and } \quad G_x^x \coloneqq G_x \cap G^x.
\]
For any two subsets $U$ and $V$ of a groupoid $G$, we define
\[
U \stimesr V \coloneqq (U \times V) \cap \Gc, \,\
UV \coloneqq \{\alpha\beta : (\alpha,\beta) \in U \stimesr V \}, \, \text{ and } \
U^{-1} \coloneqq \{\alpha^{-1} : \alpha \in U \}.
\]

We call a subset $B$ of $G$ a \hl{bisection} if there exists an open subset $U$ of $G$ containing $B$ such that $r\restr{U}$ and $s\restr{U}$ are homeomorphisms onto open subsets of $G$. We say that $G$ is \hl{\'etale} if $r$ (or, equivalently, $s$) is a local homeomorphism. If $G$ is \'etale, then $\Go$ is open in $G$, and both $G_x$ and $G^x$ are discrete in the subspace topology for any $x \in \Go$. The range and source maps of an \'etale groupoid are both open, and hence so is the multiplication map (see \cite[Lemma~8.4.11]{Sims2020}\footnote{Although the argument given in \cite[Lemma~8.4.11]{Sims2020} is for second-countable groupoids, it can be adapted to work without the second-countability hypothesis by replacing sequences with nets.}). Moreover, $G$ is \'etale if and only if $G$ has a basis of open bisections (see \cite[Lemma~8.4.9]{Sims2020}). We say that $G$ is \hl{ample} if it has a basis of \emph{compact} open bisections. If $G$ is \'etale, then $G$ is ample if and only if its unit space $\Go$ is totally disconnected (see \cite[Proposition~4.1]{Exel2010}).

If $B$ and $D$ are compact open bisections of an ample Hausdorff groupoid, then $B^{-1}$ and $BD$ are also compact open bisections. In fact, the collection of compact open bisections forms an inverse semigroup under these operations (see \cite[Proposition~2.2.4]{Paterson1999}).

The \hl{isotropy} of a groupoid $G$ is the set
\[
\Iso(G) \coloneqq \{ \gamma \in G : r(\gamma) = s(\gamma) \} = \bigcup_{x \in \Go} G_x^x.
\]
We say that $G$ is \hl{principal} if $\Iso(G) = \Go$, and that $G$ is \hl{effective} if the topological interior of $\Iso(G)$ is equal to $\Go$. We say that $G$ is \hl{topologically principal} if the set $\{ x \in \Go : G_x^x = \{x\} \}$ is dense in $\Go$. Every principal \'etale groupoid is effective and topologically principal. If $G$ is a Hausdorff \'etale groupoid, then $G$ is effective if it is topologically principal, and the converse holds if $G$ is additionally second-countable (see \cite[Lemma~3.1]{BCFS2014}). We will often work with Hausdorff groupoids that are \'etale, ample, or second-countable, but we will explicitly state these assumptions.

Before we describe algebras of functions defined on a groupoid, a few remarks on preliminary point-set topology and notation are in order. Given topological spaces $X$ and $Y$, a function $f\colon X \to Y$ is said to be \hl{locally constant} if every element of $X$ has an open neighbourhood $U$ such that $f\restr{U}$ is constant. Every locally constant function is continuous, and if $Y$ has the discrete topology, then every continuous function $f\colon X \to Y$ is locally constant. Throughout, let $R$ be a commutative unital ring endowed with the discrete topology, and write $R^\times$ for the subgroup of units in $R$.

Given a topological space $X$ and a topological ring $Z$, we define the \hl{support} of a function $f\colon X \to Z$ to be the set
\[
\supp(f) \coloneqq \{ x \in X : f(x) \ne 0 \} = f^{-1}(Z {\setminus} \{0\}).
\]
If $f$ is continuous, then its support is open, and if $f$ is locally constant, then its support is clopen. We say that $f$ is \hl{compactly supported} if $\overline{\supp(f)}$ is compact.

As motivation for our definition of a twisted Steinberg algebra, it will be helpful to briefly recall the construction of groupoid C*-algebras and Steinberg algebras, and to describe the ways in which twisted groupoid C*-algebras have been defined in the literature.

We begin by describing groupoid C*-algebras, which were introduced by Renault in \cite{Renault1980}. In the discussion that follows, it will suffice to restrict our attention to the setting in which the underlying Hausdorff groupoid $G$ is second-countable and \'etale. Although the \'etale assumption is not required, this setting is general enough to include a plethora of examples, including the Cuntz--Krieger algebras of all compactly aligned topological higher-rank graphs (see \cite[Theorem~3.16]{Yeend2007}).

Given a second-countable Hausdorff \'etale groupoid $G$, the \hl{convolution algebra} $C_c(G)$ is the complex $*$-algebra
\[
C_c(G) \coloneqq \{ f\colon G \to \C \,:\, f \text{ is continuous and } \overline{\supp(f)} \text{ is compact} \},
\]
equipped with multiplication given by the \hl{convolution product}
\[
(f * g)(\gamma) \,\coloneqq\, \sum_{\substack{(\alpha,\beta) \in \Gc, \\ \alpha\beta = \gamma}} f(\alpha) \, g(\beta) \,=\, \sum_{\eta \in G^{s(\gamma)}} f(\gamma\eta) \, g(\eta^{-1}),
\]
and involution given by $f^*(\gamma) \coloneqq \overline{f(\gamma^{-1})}$. The \hl{full groupoid C*-algebra} $C^*(G)$ is defined to be the completion of $C_c(G)$ in the \hl{full C*-norm}, and the \hl{reduced groupoid C*-algebra} $C_r^*(G)$ is defined to be the completion of $C_c(G)$ in the \hl{reduced C*-norm} (see \cite[Chapter~9]{Sims2020} for the details).

The first conception of a \emph{twisted} groupoid C*-algebra was also introduced by Renault in \cite{Renault1980}. In this setting, the ``twist'' refers to a continuous $\T$-valued $2$-cocycle on $G$, which is incorporated into the definitions of the multiplication and involution of the convolution algebra $C_c(G)$. Given an arbitrary commutative unital topological ring $R$, a \hl{(continuous) $2$-cocycle} is a continuous function $\sigma\colon \Gc\to R^\times$ that satisfies the \hl{$2$-cocycle identity}:
\[
\sigma(\alpha,\beta) \, \sigma(\alpha\beta,\gamma) = \sigma(\alpha,\beta\gamma) \, \sigma(\beta,\gamma),
\]
for all $(\alpha, \beta, \gamma) \in G^{(3)}$, and is \hl{normalised}, in the sense that
\[
\sigma(r(\gamma),\gamma) = 1 = \sigma(\gamma,s(\gamma)),
\]
for all $\gamma \in G$. We say that the $2$-cocycles $\sigma, \tau\colon \Gc \to R^\times$ are \hl{cohomologous} if there is a continuous function $b\colon G \to R^\times$ satisfying $b(x) = 1$ for all $x \in \Go$, and
\[
\sigma(\alpha,\beta) \, \tau(\alpha,\beta)^{-1} = b(\alpha) \, b(\beta) \, b(\alpha\beta)^{-1}
\]
for all $(\alpha,\beta) \in \Gc$. We may also define $2$-cocycles taking values in a particular subgroup $T$ of $R^\times$, and in this case two $2$-cocycles are cohomologous if there is a function $b$ taking values in $T$ and satisfying the condition above. Cohomology of continuous $2$-cocycles on $G$ is an equivalence relation. The equivalence class of a continuous $2$-cocycle $\sigma$ under this relation is called its \hl{cohomology class}. Note that if we omit the requirement that a $2$-cocycle be normalised, it turns out that every $2$-cocycle that is \emph{not} normalised is cohomologous to one that \emph{is} normalised (see, for example, \cite[Footnote~1, Page~1262]{BaH2014}). Thus, since we show in \cref{lemma: cohomologous twisted Steinberg algebras} that cohomologous $2$-cocycles give isomorphic twisted Steinberg algebras, it makes sense for us to just assume that all $2$-cocycles are normalised.

Given a $2$-cocycle $\sigma\colon \Gc\to \T$, the \hl{twisted convolution algebra} $C_c(G,\sigma)$ is the complex $*$-algebra that is equal as a vector space to $C_c(G)$, but has multiplication given by the \hl{twisted convolution product}
\[
(f *_\sigma g)(\gamma) \coloneqq \sum_{\substack{(\alpha,\beta) \in \Gc, \\ \alpha\beta = \gamma}} \sigma(\alpha,\beta) \, f(\alpha) \, g(\beta) = \sum_{\eta \in G^{s(\gamma)}} \sigma(\gamma\eta,\eta^{-1}) \, f(\gamma\eta) \, g(\eta^{-1}),
\]
and involution given by
\[
f^*(\gamma) \coloneqq \overline{\sigma(\gamma,\gamma^{-1})} \, \overline{f(\gamma^{-1})}.
\]
The $2$-cocycle identity guarantees that the multiplication is associative, and the assumption that the $2$-cocycle is normalised implies that the twist is trivial when either multiplying or applying the involution to functions supported on $\Go$. The \hl{full twisted groupoid C*-algebra} $C^*(G,\sigma)$ is defined to be the completion of $C_c(G,\sigma)$ in the \hl{full C*-norm}, and the \hl{reduced twisted groupoid C*-algebra} $C_r^*(G,\sigma)$ is defined to be the completion of $C_c(G,\sigma)$ in the \hl{reduced C*-norm} (see \cite[Chapter~II.1]{Renault1980} for the details). There is also a $*$-algebra norm on $C_c(G,\sigma)$, called the \hl{$I$-norm}, which is given by
\[
\lv f \rv_{I, \sigma} \coloneqq \max \Big\{\sup_{u \in \Go} \big\{\sum_{\gamma \in G^u} \lav f(\gamma) \rav \big\}, \ \sup_{u \in \Go} \big\{\sum_{\gamma \in G_u } \lav f(\gamma) \rav \big\}\Big\},
\]
for all $f \in C_c(G,\sigma)$. The $I$-norm dominates the full norm on $C_c(G,\sigma)$.

Renault \cite{Renault1980} also introduced an alternative construction of these twisted groupoid C*-algebras involving twisting the groupoid itself, via a split groupoid extension
\[
\Go \times \T \hookrightarrow G \times_\sigma \T \twoheadrightarrow G,
\]
called a \hl{twist} over $G$. In 1986, Kumjian generalised this construction to give twisted groupoid C*-algebras whose twists are not induced by $\T$-valued $2$-cocycles. In particular, the extension $\Sigma$ of $G$ by $\Go \times \T$ need not admit a continuous global section $P\colon G \to \Sigma$. In \cref{section: discrete twists over Hausdorff etale groupoids} we develop a ``discretised'' version of this more general notion of a twist, whose definition is in line with \cite{Brown1982} when $G$ is a discrete group. Since our definition is almost identical to Kumjian's (with the difference being the choice of topology on $\T \le \C^\times$), we refer the reader to \cref{def: twist} for a more precise definition of a twist over a Hausdorff \'etale groupoid. Given a twist
\[
\Go \times \T \hookrightarrow \Sigma \twoheadrightarrow G,
\]
over a Hausdorff \'etale groupoid $G$, one constructs a $*$-algebra by defining an (untwisted) convolution and involution on the subspace of $C_c(\Sigma)$ consisting of $\T$-equivariant functions. Completing this $*$-algebra with respect to the full (or reduced) C*-norm yields the full (or reduced) twisted groupoid C*-algebra $C^*(G,\Sigma)$ (or $C_r^*(G,\Sigma)$). (See \cite{Renault2008} or \cite[Chapter~11]{Sims2020} for more details.)

We conclude this section with the definition of Steinberg algebras, which were originally introduced in \cite{Steinberg2010, CFST2014}, and are a purely algebraic analogue of groupoid C*-algebras. Let $G$ be an \emph{ample} Hausdorff groupoid, and let $1_B$ denote the characteristic function of $B$ from $G$ to $R$. The \hl{Steinberg algebra} associated to $G$ is
\begin{align*}
A_R(G) \,\coloneqq&~\Rspan\{ 1_B\colon G \to R \,:\, B \text{ is a compact open bisection of } G \} \\
=&~\{ f\colon G \to R \,:\, f \text{ is continuous and $\supp(f)$ is compact} \},
\end{align*}
equipped with multiplication given by the \hl{convolution product}
\[
(f * g)(\gamma) \,\coloneqq\, \sum_{\substack{(\alpha,\beta) \in \Gc, \\ \alpha\beta = \gamma}} f(\alpha) \, g(\beta) \,=\, \sum_{\eta \in G^{s(\gamma)}} f(\gamma\eta) \, g(\eta^{-1}).
\]
The \hl{complex} Steinberg algebra $A(G) \coloneqq A_{\C_d}(G)$ is a $*$-algebra with involution given by $f^*(\gamma) \coloneqq \overline{f(\gamma^{-1})}$. It is shown in shown in \cite{Steinberg2010, CFST2014} that $A(G)$ is dense in $C_c(G)$ with respect to both the full and reduced C*-norms.

\section{Twisted Steinberg algebras arising from locally constant \texorpdfstring{$2$-cocycles}{2-cocycles}} \label{section: TSGs from 2-cocycles}

In this section we introduce the twisted Steinberg algebra $A_R(G,\sigma)$ over a discrete commutative unital ring $R$ (or $A(G,\sigma)$ when $R = \C_d$) associated to an ample Hausdorff groupoid $G$ and a continuous $2$-cocycle $\sigma\colon \Gc \to T \le R^\times$. As an $R$-module, the twisted Steinberg algebra is identical to the untwisted version defined in \cref{section: prelim}. That is,
\[
A_R(G,\sigma) \coloneqq \Rspan\{ 1_B\colon G \to R \,:\, B \text{ is a compact open bisection of } G \};
\]
we now emphasise that we are viewing $R$ with the discrete topology.

\begin{lemma} \label{lemma: functions in twisted Steinberg algebras}
Let $G$ be an ample Hausdorff groupoid, and let $C_c(G,R)$ denote the collection of continuous compactly supported functions $f\colon G \to R$. For any continuous $2$-cocycle $\sigma\colon \Gc \to T \le R^\times$, we have the following:
\begin{enumerate}[label=(\alph*)]
\item \label{item: functions are C_c and LC} $A_R(G,\sigma) = C_c(G,R) = \{ f\colon G \to R : f \text{ is locally constant and } \supp(f) \text{ is compact} \}$ as $R$-modules;
\item \label{item: bisection sum} for any $f \in A_R(G,\sigma)$, there exist $\lambda_1, \dotsc, \lambda_n \in R {\setminus} \{0\}$ and mutually disjoint compact open bisections $B_1, \dotsc, B_n \subseteq G$ such that $f = \sum_{i=1}^n \lambda_i 1_{B_i}$.
\end{enumerate}
\end{lemma}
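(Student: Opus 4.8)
The plan is to establish the $R$-module identity in part \ref{item: functions are C_c and LC} by a chain of inclusions, and then leverage part \ref{item: functions are C_c and LC} together with a disjointification argument to obtain the normal form in part \ref{item: bisection sum}.

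For part \ref{item: functions are C_c and LC}, the second equality is essentially immediate from the point-set topology recalled in the preliminaries: since $R$ carries the discrete topology, a function $f\colon G \to R$ is continuous if and only if it is locally constant, and for such $f$ the support $\supp(f) = f^{-1}(R \setminus \{0\})$ is clopen, so ``$\overline{\supp(f)}$ compact'' coincides with ``$\supp(f)$ compact.'' The substance is the first equality $A_R(G,\sigma) = C_c(G,R)$. First I would prove $\subseteq$: each generator $1_B$ with $B$ a compact open bisection is locally constant with compact support $B$, hence lies in $C_c(G,R)$, and $C_c(G,R)$ is visibly an $R$-module, so it contains the $R$-span. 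For the reverse inclusion I would take an arbitrary locally constant $f$ with compact support $K \coloneqq \supp(f)$. Using that $G$ is ample, $K$ is covered by finitely many compact open bisections on each of which $f$ is constant: for each $\gamma \in K$, local constancy and the existence of a basis of compact open bisections give a compact open bisection $B_\gamma \ni \gamma$ on which $f$ takes the constant value $f(\gamma) \ne 0$; finitely many such $B_\gamma$ cover the compact set $K$. Then $f$ is a (possibly overlapping) finite $R$-linear combination of the indicator functions $1_{B_\gamma}$, witnessing $f \in A_R(G,\sigma)$. Note the cocycle $\sigma$ plays no role here, since $A_R(G,\sigma)$ and $A_R(G)$ coincide as $R$-modules by definition.

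For part \ref{item: bisection sum}, the goal is to upgrade the overlapping cover produced above to a representation over \emph{mutually disjoint} compact open bisections with nonzero coefficients. I would invoke that in an ample Hausdorff groupoid the compact open bisections form an inverse semigroup and, crucially, that they are closed under finite intersections and relative complements within the Boolean-algebra-like structure they generate (intersections $B_i \cap B_j$ are compact open bisections, and set differences of compact open subsets of the Hausdorff space $G$ are again compact open). The standard disjointification proceeds by refining the finite cover $\{B_\gamma\}$ into the finitely many nonempty ``atoms'' obtained by intersecting each $B_\gamma$ with either itself or the complement of each other $B_\delta$; these atoms are mutually disjoint compact open bisections, $f$ is constant on each, and discarding those atoms on which $f$ vanishes leaves a representation $f = \sum_{i=1}^n \lambda_i 1_{B_i}$ with each $\lambda_i \in R \setminus \{0\}$ and the $B_i$ mutually disjoint.

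The main obstacle I anticipate is the careful verification that the disjointification stays within the class of compact open bisections: one must check that intersections and relative complements of compact open bisections remain compact open bisections. Intersections are unproblematic (a subset of a bisection is a bisection, and finite intersections of compact open sets are compact open in a Hausdorff space), but relative complements require that $G$ be Hausdorff so that the compact sets $B_\delta$ are closed, making $B_\gamma \setminus B_\delta = B_\gamma \cap (G \setminus B_\delta)$ open, and hence compact open as a clopen subset of the compact set $B_\gamma$. I would state this bookkeeping cleanly rather than grind through the combinatorics of indexing the atoms. Everything else is routine once the ambient Hausdorff and ampleness hypotheses are used to guarantee the needed closure properties.
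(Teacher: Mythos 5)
Your strategy is sound, and it is worth noting that it is genuinely more self-contained than the paper's: the paper dispatches this lemma in two lines, citing the characterisations of $A_R(G)$ from Steinberg's original paper for part (a) and a disjointification lemma of Clark and Edie-Michell for part (b), with the sole observation that $\sigma$ is irrelevant since $A_R(G,\sigma)$ and $A_R(G)$ coincide as $R$-modules (a point you also make). What you have written out in full --- the compact-open-bisection cover of the support, and the atom refinement using that intersections and relative complements of compact open bisections are again compact open bisections because compact sets in a Hausdorff space are closed --- is precisely the content of those citations, and your bookkeeping for the complements is correct.

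There is, however, one step that is false as literally written. In part (a) you conclude from the finite overlapping cover $\{B_{\gamma_i}\}$ of $\supp(f)$ that ``$f$ is a (possibly overlapping) finite $R$-linear combination of the indicator functions $1_{B_\gamma}$.'' It need not be a combination of \emph{those} indicators: take compact open bisections $B_1, B_2$ with $B_1 \cap B_2$, $B_1 \setminus B_2$, and $B_2 \setminus B_1$ all nonempty, and $f = \lambda\, 1_{B_1 \cup B_2}$ with $\lambda \ne 0$ (so $f$ is constant on each $B_i$, as in your construction). If $f = a\,1_{B_1} + b\,1_{B_2}$, then evaluating on $B_1 \setminus B_2$, on $B_2 \setminus B_1$, and on the overlap gives $a = \lambda$, $b = \lambda$, and $a + b = \lambda$, forcing $\lambda = 0$, a contradiction in any nonzero ring. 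So span membership requires either extra generators (e.g.\ $1_{B_1 \cap B_2}$, via inclusion--exclusion, noting that intersections of compact open bisections are compact open bisections) or, better, the disjointified atoms. The repair is already in your own part (b): run the atom refinement \emph{first} --- each nonempty atom $\bigcap_{i \in S} B_{\gamma_i} \cap \bigcap_{j \notin S} (G \setminus B_{\gamma_j})$ is a compact open bisection on which $f$ is constant --- and then the reverse inclusion $C_c(G,R) \subseteq \Rspan\{1_B\}$ of part (a) and the disjoint normal form of part (b) drop out simultaneously. With that reordering, nothing else in your argument needs changing.
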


\begin{proof} Part~\cref{item: functions are C_c and LC} follows from the characterisations of the Steinberg algebra $A_R(G)$ given in \cite[Definition~4.1 and Remark~4.2]{Steinberg2010}, because $A_R(G,\sigma)$ and $A_R(G)$ agree as $R$-modules. Similarly, part~\cref{item: bisection sum} follows from \cite[Lemma~2.2]{CE-M2015}.
\end{proof}

From now on, we will use the characterisations of $A_R(G,\sigma)$ as an $R$-module given in \cref{lemma: functions in twisted Steinberg algebras}\cref{item: functions are C_c and LC} interchangeably with the definition.

We equip $A_R(G,\sigma)$ with a multiplication that incorporates the $2$-cocycle $\sigma$ into its definition, thereby distinguishing $A_R(G,\sigma)$ from $A_R(G)$. If we additionally assume that there is an involution $r \mapsto \overline{r}$ on the ring $R$, and that $T$ is a subgroup of $R^\times$ such that $\overline{z} = z^{-1}$ for each $z \in T$ and the $2$-cocycle $\sigma$ is $T$-valued, then we may also define an involution $*$ on $A_R(G,\sigma)$ that will make $A_R(G,\sigma)$ into a $*$-algebra. We call such an involution on $R$ a \hl{$T$-inverse involution}.

\begin{prop} \label{prop: twisted Steinberg algebra}
Let $R$ be a commutative unital ring, let $G$ be an ample Hausdorff groupoid, and let $\sigma\colon \Gc \to R^\times$ be a continuous $2$-cocycle. There is a multiplication (called \hl{(twisted) convolution}) on the $R$-module $A_R(G,\sigma)$, given by
\[
(f *_\sigma g)(\gamma) \coloneqq \sum_{\substack{(\alpha,\beta) \in \Gc, \\ \alpha\beta = \gamma}} \sigma(\alpha,\beta) \, f(\alpha) \, g(\beta) = \sum_{\eta \in G^{s(\gamma)}} \sigma(\gamma\eta,\eta^{-1})\, f(\gamma\eta) \, g(\eta^{-1}),
\]
under which $A_R(G,\sigma)$ is an $R$-algebra. Suppose additionally that $R$ has a $T$-inverse involution $r \mapsto \overline{r}$ for some $T \le R^\times$ and that $\sigma$ is $T$-valued. Then there is an involution on $A_R(G,\sigma)$, given by
\[
f^*(\gamma) \coloneqq \sigma(\gamma,\gamma^{-1})^{-1} \, \overline{f(\gamma^{-1})},
\]
under which $A_R(G,\sigma)$ is a $*$-algebra over $R$. We call $A_R(G,\sigma)$ the \hl{twisted Steinberg algebra} over $R$ associated to the pair $(G,\sigma)$.

The \hl{complex} twisted Steinberg algebra $A(G,\sigma) \coloneqq A_{\C_d}(G,\sigma)$ is a dense $*$-subalgebra of the complex twisted convolution algebra $C_c(G,\sigma)$ with respect to the $I$-norm and the full and reduced C*-norms.
\end{prop}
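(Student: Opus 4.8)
The plan is to verify in turn that $*_\sigma$ is a well-defined associative product making $A_R(G,\sigma)$ an $R$-algebra, that $*$ is a compatible involution under the stated hypotheses, and finally that for $R = \C_d$ the resulting $*$-algebra embeds densely into $C_c(G,\sigma)$. By \cref{lemma: functions in twisted Steinberg algebras}\cref{item: bisection sum} every element of $A_R(G,\sigma)$ is a finite $R$-linear combination of characteristic functions $1_B$ of compact open bisections, and the formula for $*_\sigma$ is $R$-bilinear, so for the product it suffices to analyse $1_B *_\sigma 1_D$. First I would note that the defining sums are finite: a nonzero summand for fixed $\gamma$ forces $\eta^{-1} \in \supp(g)$, and $\supp(g)^{-1} \medcap G^{s(\gamma)}$ is finite since $\supp(g)^{-1}$ is compact while $G^{s(\gamma)}$ is discrete ($G$ being \'etale). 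As $B$ and $D$ are bisections, injectivity of $r\restr{B}$ and $s\restr{D}$ gives, for each $\gamma \in BD$, a unique $(\alpha,\beta) \in B \stimesr D$ with $\alpha\beta = \gamma$; hence $1_B *_\sigma 1_D$ is supported on the compact open bisection $BD$ and there equals the map $\gamma \mapsto \sigma(\alpha,\beta)$, which is locally constant (as $\sigma$ is locally constant and the factorisation map is continuous) and so lies in $A_R(G,\sigma)$. Associativity then reduces, after expanding both iterated products as sums over factorisations $\gamma = \alpha\beta\delta$, to the equality of the scalars $\sigma(\alpha\beta,\delta)\,\sigma(\alpha,\beta)$ and $\sigma(\alpha,\beta\delta)\,\sigma(\beta,\delta)$, which is precisely the $2$-cocycle identity; distributivity and $R$-bilinearity are immediate from the formula.

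For the involution, assume the $T$-inverse involution and that $\sigma$ is $T$-valued, so that $\overline{\sigma(\cdot,\cdot)} = \sigma(\cdot,\cdot)^{-1}$. The single auxiliary identity I need beyond normalisation is obtained by applying the $2$-cocycle identity to the composable triple $(\gamma,\gamma^{-1},\gamma)$ and using $\sigma(r(\gamma),\gamma) = \sigma(\gamma,s(\gamma)) = 1$, which yields $\sigma(\gamma,\gamma^{-1}) = \sigma(\gamma^{-1},\gamma)$. Conjugate-additivity of $*$ is clear from the formula, and that $*$ is involutive follows by direct substitution using this identity together with $\overline{\overline{r}} = r$. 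The substantive computation is anti-multiplicativity $(f *_\sigma g)^* = g^* *_\sigma f^*$: here I would expand both sides, reindex the right-hand sum by $\alpha = \nu^{-1}$, $\beta = \mu^{-1}$ so that it too runs over factorisations $\alpha\beta = \gamma^{-1}$, and then match coefficients term by term. This requires showing $\sigma(\beta^{-1},\alpha^{-1})\,\sigma(\beta^{-1},\beta)^{-1}\,\sigma(\alpha^{-1},\alpha)^{-1} = \sigma(\gamma,\gamma^{-1})^{-1}\,\sigma(\alpha,\beta)^{-1}$ with $\gamma = \beta^{-1}\alpha^{-1}$, which follows from a handful of applications of the cocycle identity to triples assembled from $\alpha,\beta$ and their inverses, combined with normalisation. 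I expect this bookkeeping to be the main obstacle; the remaining verifications are formal.

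For the density claim, specialise to $R = \C_d$ with $\T$-valued $\sigma$, so that $\overline{\sigma(\gamma,\gamma^{-1})} = \sigma(\gamma,\gamma^{-1})^{-1}$ and the product and involution on $A(G,\sigma)$ coincide with the twisted operations of $C_c(G,\sigma)$; the inclusion $A(G,\sigma) \subseteq C_c(G,\sigma)$ holds because locally constant compactly supported functions are in particular continuous and compactly supported. Since the $I$-norm formula does not involve $\sigma$, and $A(G,\sigma)$ and $C_c(G,\sigma)$ coincide as function spaces with $A(G)$ and $C_c(G)$, density in the $I$-norm is exactly the assertion that $A(G)$ is $I$-norm dense in $C_c(G)$. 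I would prove this directly: given $f \in C_c(G)$ and $\varepsilon > 0$, cover $\supp(f)$ by finitely many compact open bisections, refine to a partition into mutually disjoint compact open bisections $D_1,\dotsc,D_m$ on each of which $f$ varies by less than $\varepsilon$ (possible by continuity, compactness, and the ample basis), choose $\gamma_j \in D_j$, and set $g = \sum_{j=1}^m f(\gamma_j)\,1_{D_j} \in A(G)$; then $f - g$ has support meeting each fibre $G^u$ (respectively $G_u$) in at most $m$ points, so $\lv f - g \rv_{I,\sigma} \le m\varepsilon$. Finally, because the $I$-norm dominates the full C*-norm, which in turn dominates the reduced C*-norm, this $I$-norm density upgrades at once to density in both the full and reduced C*-norms.
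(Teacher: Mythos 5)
Your proposal is correct and follows the same overall skeleton as the paper's proof---decompose into characteristic functions of compact open bisections via \cref{lemma: functions in twisted Steinberg algebras}\cref{item: bisection sum}, exploit local constancy of $\sigma$ for closure, reduce associativity and anti-multiplicativity to the $2$-cocycle identity, and upgrade $I$-norm density to the full and reduced C*-norms---but it differs in two local respects, both defensible. For closure under convolution, the paper refines the bisections $B_i, C_j$ so that $\sigma$ is constant (say $\nu_{i,j}$) on each $B_i \stimesr C_j$ and computes $f *_\sigma g = \sum_{i,j} \nu_{i,j} \, \lambda_i \, \mu_j \, 1_{B_i C_j}$ directly; you instead use that each $\gamma \in BD$ factors uniquely as $\alpha\beta$ with the factorisation map $\gamma \mapsto (\alpha,\beta)$ continuous (via $(r\restr{B})^{-1}$ and $(s\restr{D})^{-1}$), so that $1_B *_\sigma 1_D = \sigma(\alpha,\beta)$ on $BD$ is locally constant---an equivalent mechanism that avoids the refinement step. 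The scalar identity you isolate for anti-multiplicativity is exactly the one the paper derives in \cref{eqn: 2-cocycle terms}, and it does follow by the chain of cocycle and normalisation manipulations you describe, so leaving it as ``bookkeeping'' is safe. For density, the paper simply cites \cite[Proposition~2.2.7]{Paterson1999} for $I$-norm density of $A(G)$ in $C_c(G)$, whereas you give a self-contained $\varepsilon$-partition argument; this is sound (note only that after disjointifying a finite cover on whose members $f$ varies by less than $\varepsilon$, the variation on each resulting piece may degrade to $2\varepsilon$, which is harmless since $m$ is fixed) and makes the result independent of that citation. One small omission: you never explicitly verify that $A_R(G,\sigma)$ is closed under the involution, i.e.\ that $f^*$ is again locally constant with compact support; the paper handles this by refining so that $\sigma(\gamma,\gamma^{-1}) = \kappa_i$ on each $B_i$, and in your framework it follows at once from your own factorisation-map trick applied to the continuous map $\gamma \mapsto (\gamma,\gamma^{-1})$, so this is a one-line fix rather than a genuine gap.
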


\begin{remarks} \leavevmode
\begin{enumerate}[label=(\arabic*)]
\item If the $2$-cocycle $\sigma$ is trivial (in the sense that $\sigma\!\left(\Gc\right) = \{1\}$), then $A_R(G,\sigma)$ is identical to $A_R(G)$ as an $R$-algebra.
\item We often write $f * g$ or $fg$ to denote the convolution product $f *_\sigma g$ of functions $f, g \in A_R(G,\sigma)$ if the intended meaning is clear.
\item If $f, g \in A_R(G,\sigma)$, then $\supp(fg) \subseteq \supp(f)\supp(g)$. If $B$ and $D$ are compact open bisections of $G$ such that $\supp(f) = B$ and $\supp(g) = D$, then $\supp(fg) = BD$, and when $A_R(G,\sigma)$ is a $*$-algebra, $\supp(f^*) = B^{-1}$.
\item From the $2$-cocycle identity, one can readily verify that $\sigma(\gamma,\gamma^{-1}) = \sigma(\gamma^{-1},\gamma)$ for any $\gamma \in G$.
\end{enumerate}
\end{remarks}

\begin{proof}[Proof of \cref{prop: twisted Steinberg algebra}]
As $R$-modules, $A_R(G,\sigma) \cong A_R(G)$. We first show that $A_R(G,\sigma)$ is closed under the twisted convolution. Fix $f, g \in A_R(G,\sigma)$. By \cref{lemma: functions in twisted Steinberg algebras}\cref{item: bisection sum}, there exist mutually disjoint compact open bisections $B_1, \dotsc, B_m, C_1, \dotsc, C_n \subseteq G$ and scalars $\lambda_1, \dotsc, \lambda_m, \mu_1, \dotsc, \mu_n \in R {\setminus} \{0\}$ such that
\[
f = \sum_{i=1}^m \lambda_i 1_{B_i} \quad \text{ and } \quad g = \sum_{j=1}^n \mu_j 1_{C_j}.
\]
We claim that $fg \in A_R(G,\sigma)$. Since $G$ is \'etale and $f$ and $g$ have compact support, for each $\gamma \in G$, the set
\[
\left\{ (\alpha,\beta) \in \Gc \,:\, \alpha\beta = \gamma \, \text{ and } \, \sigma(\alpha,\beta) \, f(\alpha) \, g(\beta) \ne 0 \right\}
\]
is finite (see \cite[Proposition~9.1.1]{Sims2020}). Since $\sigma$ is locally constant, we can assume that for all $i \in \{1, \dotsc, m\}$ and $j \in \{1, \dotsc, n\}$, there exists $\nu_{i,j} \in R^\times$ such that $\sigma(\alpha,\beta) = \nu_{i,j}$ for all $(\alpha,\beta) \in (B_i) \stimesr (C_j)$ (because otherwise we can further refine the bisections to ensure that this is true). Thus, for all $\gamma \in G$, we have
\begin{align*}
(f *_\sigma g)(\gamma) &= \sum_{\substack{(\alpha,\beta) \in \Gc, \\ \alpha\beta=\gamma}} \sigma(\alpha,\beta) \, f(\alpha) \, g(\beta) \\
&= \sum_{\substack{(\alpha,\beta) \in \Gc, \\ \alpha\beta=\gamma}} \sigma(\alpha,\beta) \left(\sum_{i=1}^m \lambda_i 1_{B_i}(\alpha)\right) \! \left(\sum_{j=1}^n \mu_j 1_{C_j}(\beta)\right) \\
&= \sum_{\substack{(\alpha,\beta) \in \Gc, \\ \alpha\beta=\gamma}} \sum_{i=1}^m \sum_{j=1}^n \nu_{i,j} \, \lambda_i \, \mu_j \, 1_{B_i}(\alpha) \, 1_{C_j}(\beta) \\
&= \sum_{i=1}^m \sum_{j=1}^n \nu_{i,j} \, \lambda_i \, \mu_j \, 1_{B_i C_j}(\gamma).
\end{align*}
Hence $f *_\sigma g \in A_R(G,\sigma)$. The remainder of the verification that $A_R(G,\sigma)$ is an $R$-algebra is similar to \cite[Proposition~II.1.1]{Renault1980}.

Suppose now that $R$ has a $T$-inverse involution $r \mapsto \overline{r}$ for some $T \le R^\times$. We show that $f^* \in A_R(G,\sigma)$. Since $\sigma$ is locally constant, we can assume that for all $i \in \{1, \dotsc, m\}$, there exists $\kappa_i \in T$ such that $\sigma(\gamma,\gamma^{-1}) = \kappa_i$ for all $\gamma \in B_i$ (because otherwise we can further refine the bisections to ensure that this is true). Thus, for all $\gamma \in G$, we have
\[
f^*(\gamma) \,=\, \sigma(\gamma,\gamma^{-1})^{-1} \, \overline{f(\gamma^{-1})} \,=\, \overline{\sigma(\gamma,\gamma^{-1})} \left(\overline{\sum_{i=1}^m \lambda_i \, 1_{B_i}(\gamma^{-1})}\right) \,=\, \sum_{i=1}^m \, \overline{\kappa_i} \, \overline{\lambda_i} \, 1_{B_i^{-1}}(\gamma).
\]
Hence $f^* \in A_R(G,\sigma)$.

Clearly the proposed involution distributes across sums, and $(\lambda f)^* = \overline{\lambda} f^*$ for all $\lambda \in R$. Fix $\gamma \in G$. Since the involution on $R$ restricts to inversion on $T$, we see that
\[
\left(f^*\right)^*(\gamma) \,=\, \sigma(\gamma,\gamma^{-1})^{-1} \, \overline{f^*(\gamma^{-1})} \,=\, \sigma(\gamma,\gamma^{-1})^{-1} \, \overline{\sigma(\gamma^{-1},\gamma)^{-1}} \, f(\gamma) \,=\, f(\gamma).
\]
Furthermore, we have
\begin{align*}
( f *_\sigma g)^*(\gamma) \,&=\, \sigma(\gamma,\gamma^{-1})^{-1} \, \overline{(f *_\sigma g)(\gamma^{-1})} \\
&=\, \sum_{\substack{(\alpha,\beta) \in \Gc, \\ \alpha\beta = \gamma^{-1} }} \sigma(\gamma,\gamma^{-1})^{-1} \, \sigma(\alpha,\beta)^{-1} \, \overline{f(\alpha)} \, \overline{g(\beta)}, \numberthis \label{eqn: inv of conv}
\end{align*}
and
\begin{align*}
(g^* *_\sigma f^*) (\gamma) \,& =\, \sum_{\substack{(\eta,\zeta) \in \Gc, \\ \eta\zeta = \gamma}} \sigma(\eta,\zeta) \, \sigma(\eta,\eta^{-1})^{-1} \, \overline{g(\eta^{-1})} \, \sigma(\zeta,\zeta^{-1})^{-1} \, \overline{f(\zeta^{-1})} \\
&=\, \sum_{\substack{(\alpha,\beta) \in \Gc, \\ \alpha\beta = \gamma^{-1}}} \sigma(\beta^{-1},\alpha^{-1}) \, \sigma(\beta^{-1},\beta)^{-1} \, \sigma(\alpha^{-1},\alpha)^{-1} \, \overline{f(\alpha)} \, \overline{g(\beta)}. \numberthis \label{eqn: conv of invs}
\end{align*}
Using several applications of the $2$-cocycle identity and that $\sigma$ is normalised, we see that
\begin{align*}
\sigma(\alpha,\beta) \, \sigma(\gamma,\gamma^{-1}) \,&=\, \sigma(\alpha,\beta) \, \sigma(\alpha\beta, \beta^{-1}\alpha^{-1}) \\
&=\, \sigma(\alpha, \beta\beta^{-1}\alpha^{-1}) \, \sigma(\beta, \beta^{-1}\alpha^{-1}) \\
&=\, \sigma(\alpha,\alpha^{-1}) \, \sigma(\beta, \beta^{-1}\alpha^{-1}) \, \sigma(\beta^{-1},\alpha^{-1}) \, \sigma(\beta^{-1},\alpha^{-1})^{-1} \\
&=\, \sigma(\alpha,\alpha^{-1}) \, \sigma(\beta,\beta^{-1}) \, \sigma(\beta\beta^{-1}, \alpha^{-1}) \, \sigma(\beta^{-1},\alpha^{-1})^{-1} \\
&=\, \sigma(\alpha^{-1},\alpha) \, \sigma(\beta^{-1},\beta) \, \sigma(\beta^{-1},\alpha^{-1})^{-1}. \numberthis \label{eqn: 2-cocycle terms}
\end{align*}
Thus, we deduce from \cref{eqn: inv of conv,eqn: conv of invs,eqn: 2-cocycle terms} that $(f *_\sigma g)^* = g^* *_\sigma f^*$, and so $A_R(G,\sigma)$ is a $*$-algebra over $R$.

Finally, since $A_{\C_d}(G,\sigma)$ and $A_{\C_d}(G)$ agree as vector spaces, it follows from \cite[Proposition~2.2.7]{Paterson1999} that $A_{\C_d}(G,\sigma)$ is dense in $C_c(G,\sigma)$ with respect to the $I$-norm, and hence also with respect to the full and reduced C*-norms, since they are both dominated by the $I$-norm.
\end{proof}

Note that we used that $\sigma$ is locally constant in order to show that $A_R(G,\sigma)$ is closed under the twisted convolution and involution.

In the untwisted Steinberg algebra setting, given compact open bisections $B$ and $D$ of $G$, we have $1_B 1_D = 1_{BD}$. This is not the case in the twisted setting, due to the presence of the $2$-cocycle in the convolution formula. Instead, we have the following properties concerning the generators $1_B$ of the twisted Steinberg algebra $A_R(G,\sigma)$.

\begin{lemma} \label{lemma: characteristic function properties}
Let $G$ be an ample Hausdorff groupoid, and let $\sigma\colon \Gc \to R^\times$ be a continuous $2$-cocycle. Suppose that $B$ and $D$ are compact open bisections of $G$.
\begin{enumerate}[label=(\alph*)]
\item \label{item: 1_B 1_D general} For all $(\alpha,\beta) \in B \stimesr D$, we have
\[
(1_B1_D)(\alpha\beta) = \sigma(\alpha,\beta) \, 1_B(\alpha) \, 1_D(\beta) = \sigma(\alpha,\beta) \, 1_{BD}(\alpha\beta) = \sigma(\alpha,\beta).
\]
\item \label{item: 1_B 1_D unit space} If $B \subseteq \Go$ or $D \subseteq \Go$, then $1_B 1_D = 1_{BD}$.
\end{enumerate}
Suppose that $R$ has a $T$-inverse involution $r \mapsto \overline{r}$ for some $T \le R^\times$ and that $\sigma$ is $T$-valued.
\begin{enumerate}[label=(\alph*),resume]
\item \label{item: 1_B involution} For all $\gamma \in G$, we have $1_B^*(\gamma) = \sigma(\gamma,\gamma^{-1})^{-1} \, 1_{B^{-1}}(\gamma)$.
\item \label{item: 1_B range and source} We have $1_B1_B^* = 1_{r(B)}$ and $1_B^*1_B = 1_{s(B)}$.
\item \label{item: 1_B conjugation} We have $1_B1_B^*1_B = 1_B$ and $1_B^*1_B 1_B^* = 1_B^*$.
\end{enumerate}
\end{lemma}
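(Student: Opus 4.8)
The plan is to exploit the bisection hypothesis to collapse each convolution sum to at most a single nonzero term, and then to track the resulting $2$-cocycle factors using normalisation and the symmetry $\sigma(\gamma,\gamma^{-1}) = \sigma(\gamma^{-1},\gamma)$ noted in Remark~(4). For part~\cref{item: 1_B 1_D general}, I would fix $(\alpha,\beta) \in B \stimesr D$ and expand $(1_B1_D)(\alpha\beta)$ via the convolution formula. A summand $\sigma(\alpha',\beta') \, 1_B(\alpha') \, 1_D(\beta')$ is nonzero only when $\alpha' \in B$ and $\beta' \in D$ with $\alpha'\beta' = \alpha\beta$; since $r\restr{B}$ is injective and $r(\alpha') = r(\alpha\beta) = r(\alpha)$, this forces $\alpha' = \alpha$, whence $\beta' = \beta$. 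Thus the sum reduces to the single term $\sigma(\alpha,\beta)$, and as $\alpha\beta \in BD$ this equals $\sigma(\alpha,\beta) \, 1_{BD}(\alpha\beta) = \sigma(\alpha,\beta)$, giving the chain of equalities.

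For part~\cref{item: 1_B 1_D unit space}, suppose $B \subseteq \Go$ (the case $D \subseteq \Go$ being symmetric). For $\gamma \notin BD$ both sides vanish, so it suffices to take $\gamma = \alpha\beta \in BD$ with $\alpha \in B$, $\beta \in D$. Then $\alpha$ is a unit, so $\alpha = r(\beta)$, and normalisation gives $\sigma(\alpha,\beta) = \sigma(r(\beta),\beta) = 1$; combined with part~\cref{item: 1_B 1_D general} this yields $(1_B1_D)(\gamma) = 1 = 1_{BD}(\gamma)$. Part~\cref{item: 1_B involution} is immediate from the definition of the involution on $A_R(G,\sigma)$ together with the facts that $1_B(\gamma^{-1}) \in \{0,1\}$ is fixed by the involution on $R$ and that $1_B(\gamma^{-1}) = 1_{B^{-1}}(\gamma)$.

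For part~\cref{item: 1_B range and source}, I would compute $(1_B 1_B^*)(\gamma)$ directly. Using part~\cref{item: 1_B involution}, a summand is nonzero only for $\alpha \in B$ and $\beta \in B^{-1}$ (equivalently $\beta^{-1} \in B$) with $\alpha\beta = \gamma$; composability forces $s(\alpha) = r(\beta) = s(\beta^{-1})$, and injectivity of $s\restr{B}$ gives $\alpha = \beta^{-1}$, so $\gamma = r(\alpha) \in r(B)$ and the decomposition is unique. The surviving term is $\sigma(\alpha,\alpha^{-1}) \, \sigma(\alpha^{-1},\alpha)^{-1}$, which equals $1$ by the symmetry of Remark~(4); hence $1_B 1_B^* = 1_{r(B)}$, and $1_B^* 1_B = 1_{s(B)}$ follows by the symmetric computation. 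Finally, for part~\cref{item: 1_B conjugation}, I would combine the previous parts: $1_B 1_B^* 1_B = 1_{r(B)} 1_B$, and since $r(B) \subseteq \Go$, part~\cref{item: 1_B 1_D unit space} gives $1_{r(B)} 1_B = 1_{r(B)B} = 1_B$ using $r(B)B = B$; the second identity then follows by applying the involution, since $*$ is an anti-homomorphism and $(1_B^*)^* = 1_B$, so $1_B^* 1_B 1_B^* = (1_B 1_B^* 1_B)^* = 1_B^*$.

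The main obstacle I anticipate is the bookkeeping in part~\cref{item: 1_B range and source}: one must verify both that the bisection property pins down a unique decomposition $\gamma = \alpha\alpha^{-1}$ and that the two resulting cocycle factors cancel, which is precisely where Remark~(4) and normalisation are essential. Everywhere else the content is a routine application of parts~\cref{item: 1_B 1_D general}--\cref{item: 1_B involution} once the single-term reduction is in hand.
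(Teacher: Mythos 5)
Your proof is correct and, for parts (a)--(d), follows essentially the same route as the paper: the paper likewise reduces each convolution to a single term via the bisection property (it phrases part (a) as ``immediate from the definition''), uses normalisation for (b), the definition of the involution for (c), and the symmetry $\sigma(\gamma,\gamma^{-1}) = \sigma(\gamma^{-1},\gamma)$ to cancel the two cocycle factors in (d) --- exactly the cancellation you flagged as the main bookkeeping point. The one genuine divergence is the second identity of part (e): the paper proves $1_B^* 1_B 1_B^* = 1_B^*$ by a direct computation, first showing $1_B^* 1_B 1_B^* = 1_{s(B)} 1_B^*$ (note your part (b) does not apply here, since $1_B^*$ is not a characteristic function) and then evaluating $(1_{s(B)} 1_B^*)(\gamma^{-1}) = \sigma(s(\gamma),\gamma^{-1}) \, 1_B^*(\gamma^{-1}) = 1_B^*(\gamma^{-1})$ using normalisation. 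You instead apply the involution to the first identity, $1_B^* 1_B 1_B^* = (1_B 1_B^* 1_B)^* = 1_B^*$, which is valid since \cref{prop: twisted Steinberg algebra} has already established that $(fg)^* = g^* f^*$ and $(f^*)^* = f$, and the hypotheses of parts (c)--(e) supply the $T$-inverse involution; your route is slightly cleaner, trading the paper's explicit cocycle evaluation for the $*$-algebra axioms, while the paper's computation has the minor virtue of being self-contained at the level of the convolution formula.
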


\begin{proof}
\begin{itemize}
\item[\cref{item: 1_B 1_D general}] This follows immediately from the definition of the twisted convolution product because $B$ and $D$ are bisections.
\item[\cref{item: 1_B 1_D unit space}] Suppose that $B \subseteq \Go$ or $D \subseteq \Go$, and fix $\gamma \in G$. If $\gamma \in BD$, then $\gamma = \alpha\beta$ for some pair $(\alpha,\beta) \in B \stimesr D$. Since $\sigma$ is normalised, we have $\sigma(\alpha,\beta) = 1$, and so
\[
(1_B 1_D)(\gamma) = \sigma(\alpha,\beta) \, 1_B(\alpha) \, 1_D(\beta) = 1_B(\alpha) \, 1_D(\beta) = 1_{BD}(\gamma).
\]
If $\gamma \notin BD$, then $(1_B 1_D)(\gamma) = 0 = 1_{BD}(\gamma)$. Thus $1_B 1_D = 1_{BD}$.
\item[\cref{item: 1_B involution}] If $\gamma \in B^{-1}$, then we have
\[
1_B^*(\gamma) = \sigma(\gamma,\gamma^{-1})^{-1} \, \overline{1_B(\gamma^{-1})} = \sigma(\gamma,\gamma^{-1})^{-1} \, 1_{B^{-1}}(\gamma).
\]
If $\gamma \notin B^{-1} = \supp(1_B^*)$, then
\[
1_B^*(\gamma) = 0 = 1_{B^{-1}}(\gamma) = \sigma(\gamma,\gamma^{-1})^{-1} \, 1_{B^{-1}}(\gamma).
\]
\item[\cref{item: 1_B range and source}] We know that $\supp(1_B1_B^*) = B B^{-1} = r(B)$, and for all $\gamma \in B$, we have
\begin{align*}
(1_B1_B^*)(r(\gamma)) &= (1_B1_B^*)(\gamma\gamma^{-1}) \\
&= \sigma(\gamma,\gamma^{-1}) \, 1_B(\gamma) \, 1_B^*(\gamma^{-1}) \\
&= \sigma(\gamma,\gamma^{-1}) \, 1_B(\gamma)\, \sigma(\gamma^{-1},\gamma)^{-1} \, 1_{B^{-1}}(\gamma^{-1}) \qquad \text{(using part~\cref{item: 1_B involution})} \\
&= 1 \\
&= 1_{r(B)}(r(\gamma)).
\end{align*}
Similarly, we have $\supp(1_B^*1_B) = B^{-1} B = s(B)$, and so for all $\gamma \in B$, we have
\begin{align*}
(1_B^*1_B)(s(\gamma)) &= (1_B^*1_B)(\gamma^{-1}\gamma) \\
&= \sigma(\gamma^{-1},\gamma) \, 1_B^*(\gamma^{-1}) \, 1_B(\gamma) \\
&= \sigma(\gamma^{-1},\gamma) \, \sigma(\gamma^{-1},\gamma)^{-1} \, 1_{B^{-1}}(\gamma^{-1}) \, 1_B(\gamma) \qquad \text{(using part~\cref{item: 1_B involution})} \\
&= 1 \\
&= 1_{s(B)}(s(\gamma)).
\end{align*}
\item[\cref{item: 1_B conjugation}] Parts \cref{item: 1_B 1_D unit space} and \cref{item: 1_B range and source} imply that
\[
1_B1_B^*1_B = 1_{r(B)} 1_B = 1_{r(B)B} = 1_B, \quad \text{and } \quad 1_B^*1_B1_B^* = 1_{s(B)} 1_B^*.
\]
Hence $\supp(1_B^*1_B1_B^*) = s(B) B^{-1} = B^{-1}$. For all $\gamma \in B$, we have
\[
(1_B^*1_B1_B^*)(\gamma^{-1}) = \sigma(s(\gamma),\gamma^{-1}) \, 1_{s(B)}(s(\gamma)) \, 1_B^*(\gamma^{-1}) = 1_B^*(\gamma^{-1}),
\]
and so $1_B^*1_B1_B^* = 1_B^*$. \qedhere
\end{itemize}
\end{proof}

The proof of the following result is inspired by the proof of \cite[Proposition~II.1.2]{Renault1980}.

\begin{lemma} \label{lemma: cohomologous twisted Steinberg algebras}
Let $G$ be an ample Hausdorff groupoid, and let $\sigma, \tau\colon \Gc \to T \le R^\times$ be two continuous $2$-cocycles whose cohomology classes coincide. Then $A_R(G,\sigma)$ is isomorphic to $A_R(G,\tau)$. If $R$ has a $T$-inverse involution, then $A_R(G,\sigma)$ is $*$-isomorphic to $A_R(G,\tau)$.
\end{lemma}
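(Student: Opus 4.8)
The plan is to adapt Renault's argument for \cite[Proposition~II.1.2]{Renault1980} to the algebraic setting. Let $b\colon G \to R^\times$ be the continuous function witnessing that $\sigma$ and $\tau$ are cohomologous, so that $b(x) = 1$ for all $x \in \Go$ and
\[
\sigma(\alpha,\beta) \, \tau(\alpha,\beta)^{-1} = b(\alpha) \, b(\beta) \, b(\alpha\beta)^{-1}
\]
for all $(\alpha,\beta) \in \Gc$. I would define a map $\Phi\colon A_R(G,\sigma) \to A_R(G,\tau)$ by $(\Phi f)(\gamma) \coloneqq b(\gamma) \, f(\gamma)$; that is, $\Phi$ is pointwise multiplication by $b$. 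The direction of this map matters: one checks below that with the cohomology relation written as above, it is $\Phi\colon A_R(G,\sigma) \to A_R(G,\tau)$ (rather than the reverse) that is multiplicative.

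First I would verify that $\Phi$ is a well-defined $R$-module isomorphism. Since $R$ is discrete, the continuous function $b$ is locally constant, so $b f$ is again locally constant; and because $b$ takes values in $R^\times$ we have $\supp(\Phi f) = \supp(f)$, so $\Phi f$ remains compactly supported and hence lies in $A_R(G,\tau) = C_c(G,R)$ by \cref{lemma: functions in twisted Steinberg algebras}\cref{item: functions are C_c and LC}. Clearly $\Phi$ is $R$-linear, and it is bijective with inverse given by pointwise multiplication by $b^{-1}$ (which is again locally constant and $R^\times$-valued). The heart of the argument is multiplicativity: for $\gamma \in G$, evaluating both sides gives
\[
\Phi(f *_\sigma g)(\gamma) = b(\gamma) \sum_{\alpha\beta = \gamma} \sigma(\alpha,\beta) \, f(\alpha) \, g(\beta), \qquad
(\Phi f *_\tau \Phi g)(\gamma) = \sum_{\alpha\beta = \gamma} \tau(\alpha,\beta) \, b(\alpha) \, b(\beta) \, f(\alpha) \, g(\beta),
\]
and these agree term-by-term precisely because, for $\alpha\beta = \gamma$, the cohomology relation rearranges to $b(\alpha\beta)\,\sigma(\alpha,\beta) = \tau(\alpha,\beta)\,b(\alpha)\,b(\beta)$. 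Thus $\Phi(f *_\sigma g) = \Phi f *_\tau \Phi g$, so $\Phi$ is an $R$-algebra isomorphism.

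For the involutive statement, assume $R$ has a $T$-inverse involution and that $b$ is $T$-valued, so $\overline{b(\gamma)} = b(\gamma)^{-1}$. I would compare $\Phi(f^*)(\gamma) = b(\gamma)\,\sigma(\gamma,\gamma^{-1})^{-1}\,\overline{f(\gamma^{-1})}$ with $(\Phi f)^*(\gamma) = \tau(\gamma,\gamma^{-1})^{-1}\,\overline{b(\gamma^{-1})}\,\overline{f(\gamma^{-1})}$; these coincide exactly when $\sigma(\gamma,\gamma^{-1})\,\tau(\gamma,\gamma^{-1})^{-1} = b(\gamma)\,b(\gamma^{-1})$. This is the cohomology relation evaluated at the composable pair $(\gamma,\gamma^{-1})$, using the normalisation $b(r(\gamma)) = 1$. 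Hence $\Phi$ is a $*$-isomorphism. I do not expect a serious obstacle here: the only subtleties are fixing the correct direction of $\Phi$ so that the cocycle boundary cancels with the right orientation, and invoking discreteness of $R$ to ensure $b$ is locally constant so that $\Phi$ genuinely maps the Steinberg algebra into itself; the two central identities then fall straight out of the cohomology relation.
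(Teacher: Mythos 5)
Your proposal is correct and takes essentially the same route as the paper's proof: both define the isomorphism as pointwise multiplication by $b$ in the same direction $A_R(G,\sigma) \to A_R(G,\tau)$, prove multiplicativity from the rearranged relation $b(\alpha\beta)\,\sigma(\alpha,\beta) = \tau(\alpha,\beta)\,b(\alpha)\,b(\beta)$, invert via pointwise multiplication by $b^{-1}$, and verify compatibility with the involution by evaluating the cohomology relation at the composable pair $(\gamma,\gamma^{-1})$ together with $\overline{b(\gamma^{-1})} = b(\gamma^{-1})^{-1}$. Your observation that discreteness of $R$ forces $b$ to be locally constant is a harmless refinement that the paper leaves implicit (the paper simply takes $b\colon G \to T$ continuous, which by its convention for $T$-valued cocycles is automatically $T$-valued, covering the point you flag before the involutive part).
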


\begin{proof}
For this proof, we will use $*$ to denote convolution, in order to distinguish it from the pointwise product.

Since $\sigma$ is cohomologous to $\tau$, there is a continuous function $b\colon G \to T$ satisfying $b(x) = 1$ for all $x \in \Go$, and
\begin{equation} \label{eqn: cohomologous}
\sigma(\alpha,\beta) \, \tau(\alpha,\beta)^{-1} = b(\alpha) \, b(\beta) \, b(\alpha\beta)^{-1},
\end{equation}
for all $(\alpha,\beta) \in \Gc$.

For each $f \in A_R(G,\sigma) = C_c(G,R)$, let $\theta(f)$ denote the pointwise product $bf$. Since $bf\colon G \to R$ is continuous and satisfies $\supp(bf) = \supp(f)$, we have $bf \in C_c(G,R) = A_R(G,\tau)$. We claim that $\theta\colon A_R(G,\sigma) \to A_R(G,\tau)$ is an $R$-algebra isomorphism. It is clear that $\theta$ is $R$-linear. We must show that $\theta$ respects the twisted convolution operation, and that it respects the involution in the case where $R$ has a $T$-inverse involution.

For all $(\alpha,\beta) \in \Gc$, \cref{eqn: cohomologous} implies that
\begin{equation} \label{eqn: b convolution property}
\sigma(\alpha,\beta) \, b(\alpha\beta) = \tau(\alpha,\beta) \, b(\alpha) \, b(\beta).
\end{equation}
Hence, for all $f, g \in A_R(G,\sigma)$ and $\gamma \in G$, we have
\begin{align*}
\big(\theta(f) *_\tau \theta(g)\big)(\gamma) &= \sum_{\substack{(\alpha,\beta) \in \Gc, \\ \alpha\beta = \gamma}} \tau(\alpha,\beta) \, \theta(f)(\alpha) \, \theta(g)(\beta) \\
&= \sum_{\substack{(\alpha,\beta) \in \Gc, \\ \alpha\beta = \gamma}} \tau(\alpha,\beta) \, b(\alpha) f(\alpha) \, b(\beta) \, g(\beta) \\
&= \sum_{\substack{(\alpha,\beta) \in \Gc, \\ \alpha\beta = \gamma}} \sigma(\alpha,\beta) \, b(\alpha\beta) \, f(\alpha) \, g(\beta) \qquad \text{(using \cref{eqn: b convolution property})} \\
&= b(\gamma) \, \sum_{\substack{(\alpha,\beta) \in \Gc, \\ \alpha\beta = \gamma}} \sigma(\alpha,\beta) \, f(\alpha) \, g(\beta) \\
&= \big(b(f *_\sigma g)\big)(\gamma) \\
&= \theta(f *_\sigma g)(\gamma).
\end{align*}
Therefore, $\theta$ is an $R$-algebra homomorphism.

We now show that $\theta$ is a bijection. Define $b^{-1}\colon G \to T$ by $b^{-1}(\gamma) \coloneqq b(\gamma)^{-1}$. For each $h \in A_R(G,\tau)$, we have $b^{-1}h \in A_R(G,\sigma)$, and so $\theta(b^{-1}h) = bb^{-1}h = h$. Hence $\theta$ is surjective. To see that $\theta$ is injective, suppose that $f, g \in A_R(G,\sigma)$ satisfy $\theta(f) = \theta(g)$. Then $f = b^{-1}bf = b^{-1}\theta(f) = b^{-1}\theta(g) = b^{-1}bg = g$. Therefore, $\theta$ is an $R$-algebra isomorphism.

Now suppose that $R$ has a $T$-inverse involution $r \mapsto \overline{r}$. For all $\gamma \in G$, letting $\alpha = \gamma$ and $\beta = \gamma^{-1}$ in \cref{eqn: cohomologous} gives
\[
\sigma(\gamma,\gamma^{-1}) \, \tau(\gamma,\gamma^{-1})^{-1} = b(\gamma) \, b(\gamma^{-1}) \, b(\gamma\gamma^{-1})^{-1} = b(\gamma) \, b(\gamma^{-1}),
\]
and hence
\begin{equation} \label{eqn: b involution property}
b(\gamma) \, \sigma(\gamma,\gamma^{-1})^{-1} = \tau(\gamma,\gamma^{-1})^{-1} \, \overline{b(\gamma^{-1})}.
\end{equation}
Thus, for all $f \in A_R(G,\sigma)$ and $\gamma \in G$, we have
\begin{align*}
\theta(f^*)(\gamma) &= b(\gamma) \, f^*(\gamma) \\
&= b(\gamma) \, \sigma(\gamma,\gamma^{-1})^{-1} \, \overline{f(\gamma^{-1})} \\
&= \tau(\gamma,\gamma^{-1})^{-1} \, \overline{b(\gamma^{-1})} \, \overline{f(\gamma^{-1})} \qquad \text{(using \cref{eqn: b involution property})} \\
&= (bf)^*(\gamma) \\
&= \theta(f)^*(\gamma),
\end{align*}
and so $\theta$ is a $*$-isomorphism.
\end{proof}

\begin{prop}
Let $G$ be an ample Hausdorff groupoid, and let $\sigma\colon \Gc \to T \le R^\times$ be a continuous $2$-cocycle. The set
\[
\{1_B\colon G \to R \,:\, B \text{ is a nonempty compact open subset of } \Go \}
\]
forms a local unit for $A_R(G,\sigma)$. That is, for any finite collection $f_1, \dotsc, f_n \in A_R(G,\sigma)$, there exists a compact open subset $E$ of $\Go$ such that \[
1_E \, f_i = f_i = f_i \, 1_E,
\]
for each $i \in \{1, \dotsc, n\}$.
\end{prop}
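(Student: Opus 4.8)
The plan is to pin down how multiplication by $1_E$ acts on a single generator $1_B$ when $E$ is a compact open subset of $\Go$, and then to choose $E$ large enough to fix every $f_i$ simultaneously. The crucial simplification is that the twist plays no role here: by \cref{lemma: characteristic function properties}\cref{item: 1_B 1_D unit space}, whenever $E \subseteq \Go$ we have $1_E 1_B = 1_{EB}$ and $1_B 1_E = 1_{BE}$, with no factor of $\sigma$ surviving (this is precisely where normalisation of $\sigma$ enters).

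First I would compute the sets $EB$ and $BE$ for a compact open bisection $B$. Since every element of $E$ is a unit, a composable pair $(\alpha,\beta) \in E \stimesr B$ forces $\alpha = r(\beta)$ and $\alpha\beta = \beta$, so $EB = \{\beta \in B : r(\beta) \in E\} = B \cap r^{-1}(E)$; symmetrically $BE = B \cap s^{-1}(E)$. In particular, if $r(B) \subseteq E$ then $EB = B$ and hence $1_E 1_B = 1_B$, while if $s(B) \subseteq E$ then $1_B 1_E = 1_B$. Using \cref{lemma: functions in twisted Steinberg algebras}\cref{item: bisection sum} to write each $f_i = \sum_j \lambda_{i,j} 1_{B_{i,j}}$ as a finite $R$-combination of characteristic functions of compact open bisections, $R$-linearity reduces the desired identities $1_E f_i = f_i = f_i 1_E$ to the containments $r(B_{i,j}) \subseteq E$ and $s(B_{i,j}) \subseteq E$ for all $i,j$; equivalently, $r(\supp f_i) \cup s(\supp f_i) \subseteq E$ for every $i$.

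It therefore remains to build a single compact open $E \subseteq \Go$ containing $\bigcup_{i=1}^n \big(r(\supp f_i) \cup s(\supp f_i)\big)$, and the natural choice is to take this union as $E$ itself. Each $\supp f_i$ is compact (indeed clopen, as $f_i$ is locally constant with compact support), so its images under the continuous maps $r$ and $s$ are compact; since $G$ is \'etale these maps are also open, so the images are open, and as $G$ is Hausdorff they are in fact clopen subsets of $\Go$. Hence $E$, being a finite union of compact open subsets of $\Go$, is itself compact and open, and the computation above yields $1_E f_i = f_i = f_i 1_E$ for each $i$. (If all the $f_i$ vanish, so that this $E$ is empty, one replaces it by any nonempty compact open subset of $\Go$, which still fixes the zero functions.)

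I do not expect a serious obstacle here; the only points demanding care are the vanishing of the twist on unit-space products (already isolated in \cref{lemma: characteristic function properties}\cref{item: 1_B 1_D unit space}) and the verification that $r(\supp f_i)$ and $s(\supp f_i)$ are compact \emph{open} subsets of $\Go$, which rests on $r$ and $s$ being simultaneously continuous and open for an ample Hausdorff groupoid.
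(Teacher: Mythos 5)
Your proposal is correct, and it shares its pivotal observation with the paper: multiplication by $1_E$ for $E \subseteq \Go$ is untouched by the $2$-cocycle, because $\sigma$ is normalised (this is exactly \cref{lemma: characteristic function properties}\cref{item: 1_B 1_D unit space}). Where you diverge is in what happens next. The paper's entire proof is a one-line reduction: having noted that the twist is invisible on unit-space products, it simply cites the analogous non-twisted result (\cite[Lemma~2.6]{CEP2018}) and stops. You instead reconstruct that untwisted lemma from scratch: decomposing each $f_i$ via \cref{lemma: functions in twisted Steinberg algebras}\cref{item: bisection sum}, computing $EB = B \cap r^{-1}(E)$ and $BE = B \cap s^{-1}(E)$ from the fact that units act trivially, and taking $E = \bigcup_{i=1}^n \big(r(\supp f_i) \cup s(\supp f_i)\big)$, which is compact because $r$ and $s$ are continuous and each $\supp f_i$ is compact, and open because $r$ and $s$ are open maps on an \'etale groupoid. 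All of these steps are sound (note that your appeal to disjointness of the $B_{i,j}$ with nonzero coefficients is what justifies the ``equivalently'' identifying $\bigcup_j B_{i,j}$ with $\supp f_i$, though the weaker containments $r(B_{i,j}), s(B_{i,j}) \subseteq E$ already suffice on their own), and your handling of the degenerate case $f_1 = \dots = f_n = 0$ is appropriately careful. What your route buys is self-containedness — the reader sees the explicit construction of $E$ and precisely where \'etale-ness and Hausdorffness enter — at the cost of redoing an argument the paper deliberately outsources; what the paper's route buys is brevity and a clean illustration of its recurring theme that unit-space phenomena in $A_R(G,\sigma)$ reduce verbatim to the untwisted Steinberg algebra.
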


\begin{proof}
Since multiplication by $1_E$ for $E \subseteq \Go$ is not affected by the $2$-cocycle, this follows from the analogous non-twisted result \cite[Lemma~2.6]{CEP2018}.
\end{proof}

\section{Twisted Steinberg algebras arising from discrete twists} \label{section: discrete twists}

There is another (often more general) notion of a twisted groupoid C*-algebra which is constructed from a ``twist" over the groupoid itself; that is, from a locally split groupoid extension of a Hausdorff \'etale groupoid $G$ by $\Go \times \T$. In this section, we define a discretised algebraic analogue of this twist and its associated twisted Steinberg algebra. The primary modification is to replace the topological group $\T$ with a discrete subgroup $T$ of $R^\times$. Many of the results in \cref{section: discrete twists over Hausdorff etale groupoids,section: twists and 2-cocycles} have roots or inspiration in Kumjian's study of groupoid C*-algebras built from groupoid extensions in \cite{Kumjian1986}.

The results in \cref{section: discrete twists over Hausdorff etale groupoids,section: twists and 2-cocycles} also hold in the classical setting with the same proofs. If one is interested in $\T$-valued $2$-cocycles, replacing $T$ with $\T$ (endowed with the standard topology) will not change any of the algebraic arguments therein, and the topological arguments carry through mutatis mutandis. As our ultimate focus is algebraic, we present all of our results in terms of $T$.

\subsection{Discrete twists over Hausdorff \'etale groupoids} \label{section: discrete twists over Hausdorff etale groupoids}

The definition of a \emph{twist} over a Hausdorff \'etale groupoid, which we refer to as a \emph{classical twist}, can be found in \cite[Definition~11.1.1]{Sims2020}. The following is our discretised version.

\begin{definition} \label{def: twist}
Let $G$ be a Hausdorff \'etale groupoid, let $R$ be a commutative unital ring, and let $T \le R^\times$. A \hl{discrete twist} by $T$ over $G$ is a sequence
\[
\displaystyle \Go \times T \overset{i} \hookrightarrow \Sigma \overset{q} \twoheadrightarrow G,
\]
where the groupoid $\Go \times T$ is regarded as a trivial group bundle with fibres $T$, $\Sigma$ is a Hausdorff groupoid with $\Sigmao = i\big(\Go \times \{1\}\big)$, and $i$ and $q$ are continuous groupoid homomorphisms that restrict to homeomorphisms of unit spaces, such that the following conditions hold.
\begin{enumerate}[label=(\alph*)]
\item \label{item: exactness} The sequence is exact, in the sense that $i(\{x\} \times T) = q^{-1}(x)$ for every $x \in \Go$, $i$ is injective, and $q$ is a quotient map.\footnote{Although it is not explicitly stated in \cite[Definition~11.1.1]{Sims2020} that the groupoid homomorphism $q\colon \Sigma \to G$ is a quotient map and satisfies $q(i(x,z)) = x$ for every $(x,z) \in \Go \times \T$, it follows from the definition.}
\item \label{item: P_alpha} The groupoid $\Sigma$ is a locally trivial $G$-bundle, in the sense that for each $\alpha \in G$, there is an open bisection $B_\alpha$ of $G$ containing $\alpha$, and a continuous map $P_\alpha\colon B_\alpha \to \Sigma$ such that
\begin{enumerate}[label=(\roman*), ref=\cref{item: P_alpha}(\roman*)]
\item \label[condition]{cond: P_alpha section} $q \circ P_\alpha = \id_{B_\alpha}$; and
\item \label[condition]{cond: P_alpha homeo} the map $(\beta,z) \mapsto i(r(\beta),z) \, P_\alpha(\beta)$ is a homeomorphism from $B_\alpha \times T$ to $q^{-1}(B_\alpha)$.
\end{enumerate}
\item The image of $i$ is central in $\Sigma$, in the sense that $i(r(\varepsilon),z) \, \varepsilon = \varepsilon \, i(s(\varepsilon),z)$ for all $\varepsilon \in \Sigma$ and $z \in T$.
\end{enumerate}
We denote a discrete twist over $G$ either by $(\Sigma, i, q)$, or simply by $\Sigma$. We identify $\Sigmao$ with $\Go$ via $q\restr{\Sigmao}$. A continuous map $P_\alpha\colon B_\alpha \to \Sigma$ is called a \hl{(continuous) local section} if it satisfies \cref{cond: P_alpha section}. A (classical) twist over $G$ has the same definition as above, with the exception that $T$ is replaced by $\T$.
\end{definition}

In brief, we think of a discrete twist by $T$ over $G$ as a locally split extension $\Sigma$ of $G$ by $\Go \times T$, where the image of $\Go \times T$ under $i$ is central in $\Sigma$.

\begin{example}
If $G$ is a discrete group, then a discrete twist over $G$ as defined above is a central extension of $G$.
\end{example}

The following result contains several additional properties of discrete twists, which are consequences of \cref{def: twist}.

\begin{lemma} \label{lemma: twist properties}
Let $G$ be a Hausdorff \'etale groupoid, and let $(\Sigma, i, q)$ be a discrete twist by $T \le R^\times$ over $G$. Then the following conditions hold.
\begin{enumerate}[label=(\alph*)]
\item \label{item: Sigma is etale} The groupoid $\Sigma$ is \'etale.
\item \label{item: i is open} The map $i$ is a homeomorphism onto an open subset of $\Sigma$.
\item \label{item: P_alpha units} The open bisections and continuous local sections in \cref{def: twist}\cref{item: P_alpha} can be chosen so that $P_\alpha(\Go \cap B_\alpha) \subseteq \Sigmao$ for each $\alpha \in G$.
\item \label{item: ample G twist} If $G$ is ample, then the open bisections in \cref{def: twist}\cref{item: P_alpha} can be chosen to be compact.
\end{enumerate}
\end{lemma}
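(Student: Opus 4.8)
The plan is to draw everything out of the local triviality condition \cref{cond: P_alpha homeo} together with the discreteness of $T$, proving the four parts in turn and feeding the earlier ones into the later ones. For each $\alpha \in G$, write $\Phi_\alpha\colon B_\alpha \times T \to q^{-1}(B_\alpha)$ for the homeomorphism $(\beta,z) \mapsto i(r(\beta),z)\,P_\alpha(\beta)$ supplied by \cref{cond: P_alpha homeo}. For \cref{item: Sigma is etale}: since $q$ is a groupoid homomorphism and $q \circ P_\alpha = \id_{B_\alpha}$, a direct computation gives $q(\Phi_\alpha(\beta,z)) = \beta$, so $q$ restricted to $q^{-1}(B_\alpha)$ is the first-coordinate projection composed with $\Phi_\alpha^{-1}$. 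As $T$ is discrete this projection is a local homeomorphism, and the sets $q^{-1}(B_\alpha)$ cover $\Sigma$, so $q$ is a local homeomorphism. The range map $r_\Sigma$ of $\Sigma$ then satisfies $r_\Sigma = (q\restr{\Sigmao})^{-1} \circ r \circ q$ (as $q$ preserves ranges and $q\restr{\Sigmao}$ is a homeomorphism onto $\Go$), exhibiting $r_\Sigma$ as a composite of local homeomorphisms; hence $\Sigma$ is étale.

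The crux of the remaining parts is a single observation measuring how far a section fails to carry units to units. For $y \in \Go \cap B_\alpha$ we have $q(P_\alpha(y)) = y \in \Go$, so by exactness $P_\alpha(y) = i(y, c_\alpha(y))$ for a unique $c_\alpha(y) \in T$. Since $\Phi_\alpha(y, c_\alpha(y)^{-1}) = i(y,1)$ is the unit over $y$, applying the continuous map $\Phi_\alpha^{-1}$ to the continuous assignment $y \mapsto (q\restr{\Sigmao})^{-1}(y)$ shows that $y \mapsto (y, c_\alpha(y)^{-1})$ is continuous; as $T$ is discrete, $c_\alpha$ is locally constant. I expect this to be the main obstacle, since it is precisely where the bundle chart must be combined with the discreteness of $T$, and it is what would behave differently for a genuinely continuous (non-discrete) twist.

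For \cref{item: i is open}: by exactness the image of $i$ is $q^{-1}(\Go)$, which is open because $\Go$ is open in the étale groupoid $G$; and $i$ is continuous and injective by hypothesis, so it suffices to show $i$ is open onto its image. For $y \in \Go \cap B_\alpha$ and $z \in T$, the key computation gives $i(y,z) = \Phi_\alpha(y, z\,c_\alpha(y)^{-1})$, and since $(y,z) \mapsto (y, z\,c_\alpha(y)^{-1})$ is a homeomorphism of $(\Go \cap B_\alpha) \times T$, the map $i$ coincides near each point with a homeomorphism onto the open set $q^{-1}(\Go \cap B_\alpha)$. Hence $i$ is open, and therefore a homeomorphism onto the open set $q^{-1}(\Go)$.

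For \cref{item: P_alpha units} and \cref{item: ample G twist}: first note that $\Go$ is clopen in $G$, being open because $G$ is étale and closed because a net in $\Go$ converging to $\gamma \in G$ forces $r(\gamma) = \gamma$. For \cref{item: P_alpha units}, fix $\alpha$. If $\alpha \notin \Go$, replace $B_\alpha$ by the open bisection $B_\alpha \setminus \Go$, on which the requirement is vacuous. If $\alpha \in \Go$, use local constancy of $c_\alpha$ to shrink $B_\alpha$ to an open bisection $B_\alpha' \ni \alpha$ with $c_\alpha \equiv t_0$ on $\Go \cap B_\alpha'$, and set $\hat P_\alpha(\beta) \coloneqq i(r(\beta), t_0^{-1})\,P_\alpha(\beta)$; one checks that $\hat P_\alpha$ is a continuous section, that its chart map $(\beta,z) \mapsto i(r(\beta), z\,t_0^{-1})\,P_\alpha(\beta)$ is $\Phi_\alpha$ precomposed with a homeomorphism of $B_\alpha' \times T$ (hence still a homeomorphism), and that $\hat P_\alpha(y) = i(y,1) \in \Sigmao$ for $y \in \Go \cap B_\alpha'$. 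Finally, for \cref{item: ample G twist}, when $G$ is ample its compact open bisections form a basis, so we may choose a compact open bisection containing $\alpha$ inside the bisection already selected and restrict the section to it; restriction preserves \cref{cond: P_alpha section,cond: P_alpha homeo}, and this restriction can be combined with the modification above to keep units mapping into $\Sigmao$.
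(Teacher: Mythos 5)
Your proposal is correct, and it takes a genuinely different route from the paper's. The paper argues part by part: for \cref{item: Sigma is etale} it constructs, around each $\varepsilon \in \Sigma$, the open slice $U_\varepsilon = \phi_{q(\varepsilon)}\big(B_{q(\varepsilon)} \times \{z_\varepsilon\}\big)$ and checks directly that $r\restr{U_\varepsilon}$ is injective with open image; for \cref{item: i is open} it writes $i(B_x \times \{z\}) = \phi_x(B_x \times \{z\}) \, P_x(B_x)^{-1}$ and invokes openness of multiplication in $\Sigma$ (a consequence of \cref{item: Sigma is etale}); and for \cref{item: P_alpha units} it splits into cases, defining the section at a unit to be $(q\restr{\Sigmao})^{-1}$ on $\Go \cap D_\alpha$ (so the bisection shrinks into $\Go$) and then re-verifying injectivity, surjectivity, and bicontinuity of the chart map from scratch. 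You instead isolate a single lemma --- the defect function $c_\alpha$ with $P_\alpha(y) = i(y, c_\alpha(y))$ is locally constant, obtained by pushing the continuous unit section through $\Phi_\alpha^{-1}$ and using discreteness of $T$ --- and the rest becomes bookkeeping: $i$ agrees on $(\Go \cap B_\alpha) \times T$ with $\Phi_\alpha$ composed with the shear $(y,z) \mapsto (y, z\,c_\alpha(y)^{-1})$, giving \cref{item: i is open} without the multiplication trick; and in \cref{item: P_alpha units} the corrected section $\hat P_\alpha(\beta) = i(r(\beta), t_0^{-1})P_\alpha(\beta)$ has chart equal to $\Phi_\alpha$ precomposed with a homeomorphism, so \cref{cond: P_alpha homeo} needs no fresh verification, and your bisection remains a full neighbourhood of $\alpha$ in $G$ rather than a subset of $\Go$. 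Both approaches are valid; yours buys economy and makes visible exactly where discreteness of $T$ enters (as you correctly flag), while the paper's is more self-contained at each step. Your computations check out: $\Phi_\alpha(y, c_\alpha(y)^{-1}) = i(y,1)$, $i(y,z) = \Phi_\alpha(y, z\,c_\alpha(y)^{-1})$, $q \circ \hat P_\alpha = \id$, and $\hat P_\alpha(y) = i(y,1)$ on $\Go \cap B_\alpha'$ are all as claimed, and restriction of charts to smaller bisections preserves \cref{cond: P_alpha section,cond: P_alpha homeo} since $q \circ \Phi_\alpha$ is the first-coordinate projection.

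One ordering issue you should repair. In \cref{item: Sigma is etale}, the factorisation $r_\Sigma = (q\restr{\Sigmao})^{-1} \circ r \circ q$ exhibits $r_\Sigma$ as a local homeomorphism of $\Sigma$ only if its images --- which lie in $\Sigmao$ --- are open \emph{in $\Sigma$}, and a priori $(q\restr{\Sigmao})^{-1}$ takes open sets only to sets open in the subspace topology of $\Sigmao$. Openness of $\Sigmao$ in $\Sigma$ is not one of the twist axioms; under your chart it corresponds to openness of $\{(y, c_\alpha(y)^{-1}) : y \in \Go \cap B_\alpha\}$ in $B_\alpha \times T$, which is precisely local constancy of $c_\alpha$. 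Since your proof of that key observation uses only continuity of $(q\restr{\Sigmao})^{-1}$ and of $\Phi_\alpha^{-1}$ --- not \'etaleness of $\Sigma$ --- there is no circularity: simply prove the key observation first, deduce that $\Sigmao$ is open in $\Sigma$, and then run \cref{item: Sigma is etale} as written. (The paper's own proof of \cref{item: Sigma is etale} asserts that $r(U_\varepsilon) = (q\restr{\Sigmao})^{-1}\big(r(B_{q(\varepsilon)})\big)$ is open in $\Sigma$ with a similarly terse justification, so your reordered version would in fact make this point more transparent than the original.)
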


\begin{proof}
For part~\cref{item: Sigma is etale}, we will show that the range map on $\Sigma$ is a local homeomorphism. For this, fix $\varepsilon \in \Sigma$. It suffices to find an open neighbourhood $U_\varepsilon \subseteq \Sigma$ of $\varepsilon$ such that $r\restr{U_\varepsilon}$ is a homeomorphism onto an open subset of $\Sigma$. By \cref{def: twist}\cref{item: P_alpha} there exist an open bisection $B_{q(\varepsilon)}$ of $G$ containing ${q(\varepsilon)}$, and a continuous local section $P_{q(\varepsilon)}\colon B_{q(\varepsilon)} \to \Sigma$, such that the map $\phi_{q(\varepsilon)}\colon B_{q(\varepsilon)} \times T \to q^{-1}(B_{q(\varepsilon)})$ given by $\phi_{q(\varepsilon)}(\beta,z) \coloneqq i(r(\beta),z) \, P_{q(\varepsilon)}(\beta)$ is a homeomorphism. For each $(\beta,z) \in B_{q(\varepsilon)} \times T$, we have $q\big(\phi_{q(\varepsilon)}(\beta,z)\big) = q\big(P_{q(\varepsilon)}(\beta)\big) = \beta$. Since $\varepsilon \in q^{-1}(B_{q(\varepsilon)})$, there is a unique $z_\varepsilon \in T$ such that $\phi_{q(\varepsilon)}(q(\varepsilon),z_\varepsilon) = \varepsilon$. Define $U_\varepsilon \coloneqq \phi_{q(\varepsilon)}\big(B_{q(\varepsilon)} \times \{z_\varepsilon\}\big)$. Then $\varepsilon \in U_\varepsilon$, and since $T$ has the discrete topology and $\phi_{q(\varepsilon)}$ is an open map onto an open subset of $\Sigma$, $U_\varepsilon$ is an open subset of $\Sigma$. Since $q(U_\varepsilon) = B_{q(\varepsilon)}$, we have $r(U_\varepsilon) = (q\restr{\Sigmao})^{-1}\big(r(q(U_\varepsilon))\big) = (q\restr{\Sigmao})^{-1}(r(B_{q(\varepsilon)}))$. Thus $r(U_\varepsilon)$ is open in $\Sigma$, because the range map in $G$ is open and $q\restr{\Sigmao}$ is continuous. To see that $r\restr{U_\varepsilon}$ is injective, suppose that $r(\zeta) = r(\eta)$ for some $\zeta, \eta \in U_\varepsilon$. Then $q(\zeta), q(\eta) \in B_{q(\varepsilon)}$ and $r(q(\zeta)) = q(r(\zeta)) = q(r(\eta)) = r(q(\eta))$, and so $q(\zeta) = q(\eta)$ since $r\restr{B_{q(\varepsilon)}}$ is injective. Thus, we have $\zeta = \phi_{q(\varepsilon)}(q(\zeta),z_\varepsilon) = \phi_{q(\varepsilon)}(q(\eta),z_\varepsilon) = \eta$, and so $r\restr{U_\varepsilon}$ is injective. Therefore, $\Sigma$ is \'etale.

For part~\cref{item: i is open}, note that the image of $i$ is $q^{-1}(\Go)$, which is open in $\Sigma$ because $q$ is continuous and $\Go$ is an open subset of $G$. Since $i$ is injective and continuous by definition, we need only show that $i$ is an open map. Fix $z \in T$ and an open set $U \subseteq \Go$. Then $U$ is open in $G$ because $\Go$ is open in $G$. Since $T$ has the discrete topology, it suffices to show that $i(U \times \{z\})$ is open in $\Sigma$. Fix $x \in U$. By \cref{def: twist}\cref{item: P_alpha} there exist an open bisection $B_x$ of $G$ containing $x$, and a continuous local section $P_x\colon B_x \to \Sigma$, such that the map $\phi_x\colon B_x \times T \to q^{-1}(B_x)$ given by $\phi_x(\gamma,w) \coloneqq i(r(\gamma),w) \, P_x(\gamma)$ is a homeomorphism. Since $\phi_x\restr{(B_x \cap U) \times T}$ is a homeomorphism onto $q^{-1}(B_x \cap U)$, we may assume that $B_x \subseteq U \subseteq \Go$. For each $y \in B_x$, we have $\phi_x(y,1) = i(y,1) \, P_x(y) = P_x(y)$, and so $P_x(B_x) = \phi_x(B_x \times \{1\})$. Since $T$ has the discrete topology and $\phi_x$ is an open map onto an open subset of $\Sigma$, we deduce that $P_x(B_x)$ is an open subset of $\Sigma$. For each $y \in B_x \subseteq \Go$, we have $\phi_x(y,z) = i(y,z) \, P_x(y)$, and hence
\[
i(B_x \times \{z\}) = \phi_x(B_x \times \{z\}) \, P_x(B_x)^{-1}.
\]
Since $\phi_x$ and inversion in $\Sigma$ are homeomorphisms and part~\cref{item: Sigma is etale} implies that multiplication in $\Sigma$ is an open map, we deduce that $i(B_x \times \{z\})$ is an open subset of $\Sigma$. Therefore,
\[
i(U \times \{z\}) = \bigcup_{x \in U} i(B_x \times \{z\})
\]
is an open subset of $\Sigma$, and hence $i$ is a homeomorphism.

For part~\cref{item: P_alpha units}, fix $\alpha \in G$. By \cref{def: twist}\cref{item: P_alpha} there exist an open bisection $D_\alpha$ of $G$ containing $\alpha$, and a continuous local section $S_\alpha\colon D_\alpha \to \Sigma$, such that the map $\phi_{S_\alpha}\colon (\beta, z) \mapsto i(r(\beta),z) \, S_\alpha(\beta)$ is a homeomorphism from $D_\alpha \times T$ to $q^{-1}(D_\alpha)$.

There are two cases to consider. First, suppose that $\alpha \in G {\setminus} \Go$. Define $B_\alpha \coloneqq D_\alpha {\setminus} \Go$ and $P_\alpha \coloneqq S_\alpha\restr{B_\alpha}$. Since $G$ is Hausdorff, $\Go$ is closed, and hence $B_\alpha$ is open. It follows from the definitions of $D_\alpha$ and $S_\alpha$ that $B_\alpha$ is a bisection of $G$ containing $\alpha$, and that $P_\alpha$ is a continuous map satisfying $q \circ P_\alpha = \id_{B_\alpha}$. Since $\Go \cap B_\alpha = \varnothing$, we trivially have $P_\alpha(\Go \cap B_\alpha) \subseteq \Sigmao$. Alternatively, suppose that $\alpha \in \Go$. Define $B_\alpha \coloneqq \Go \cap D_\alpha$ and $P_\alpha \coloneqq (q\restr{\Sigmao})^{-1}\restr{B_\alpha}$. Since $G$ is \'etale, $\Go$ is open, and hence $B_\alpha$ is open. It follows from the definition of $D_\alpha$ that $B_\alpha$ is a bisection of $G$ containing $\alpha$. Since $B_\alpha \subseteq \Go$ and $q$ restricts to a homeomorphism of unit spaces, $P_\alpha$ is a continuous map satisfying $q \circ P_\alpha = \id_{B_\alpha}$ and $P_\alpha(\Go \cap B_\alpha) = (q\restr{\Sigmao})^{-1}(B_\alpha) \subseteq \Sigmao$.

We now show that \cref{cond: P_alpha homeo} of \cref{def: twist} is still satisfied in both cases. Define $\phi_{P_\alpha}(\beta,z) \coloneqq i(r(\beta),z) \, P_\alpha(\beta)$ for all $(\beta,z) \in B_\alpha \times T$. To see that $\phi_{P_\alpha}$ is injective, suppose that $\phi_{P_\alpha}(\beta,z) = \phi_{P_\alpha}(\gamma,w)$ for some $(\beta,z), (\gamma,w) \in B_\alpha \times T$. Since $i(\Go \times T) = q^{-1}(\Go)$ and $q \circ P_\alpha = \id_{B_\alpha}$, we have $\beta = q\big(\phi_{P_\alpha}(\beta,z)\big) = q\big(\phi_{P_\alpha}(\gamma,w)\big) = \gamma$, and hence
\[
i(r(\beta),z) = \phi_{P_\alpha}(\beta,z) \, P_\alpha(\beta)^{-1} = \phi_{P_\alpha}(\gamma,w) \, P_\alpha(\beta)^{-1} = i(r(\gamma),w) = i(r(\beta),w).
\]
It follows from the injectivity of $i$ that $z = w$, and hence $\phi_{P_\alpha}$ is injective. To see that $\phi_{P_\alpha}$ is surjective, fix $\varepsilon \in q^{-1}(B_\alpha)$, and let $\beta \coloneqq q(\varepsilon)$. Then $q\big(\varepsilon \, P_\alpha(\beta)^{-1}\big) = q(\varepsilon) \beta^{-1} = r(\beta)$, and so $\varepsilon \, P_\alpha(\beta)^{-1} \in q^{-1}(r(\beta))$. Hence \cref{def: twist}\cref{item: exactness} implies that there exists $z \in T$ such that $\varepsilon \, P_\alpha(\beta)^{-1} = i(r(\beta),z)$. Thus $\phi_{P_\alpha}(\beta,z) = i(r(\beta),z) \, P_\alpha(\beta) = \varepsilon$, and so $\phi_{P_\alpha}$ is surjective.

If $\alpha \in G {\setminus} \Go$, then $\phi_{P_\alpha} = \phi_{S_\alpha}\restr{B_\alpha \times T}$, and it follows that $\phi_{P_\alpha}$ is open and continuous. If $\alpha \in \Go$, then $B_\alpha \subseteq \Go$, and $\phi_{P_\alpha}(y,z) = i(y,z) \, (q\restr{\Sigmao})^{-1}(y)$ for all $(y,z) \in B_\alpha \times T$. Part~\cref{item: Sigma is etale} implies that multiplication in $\Sigma$ is open, and it follows from the fact that the maps $i$ and $(q\restr{\Sigmao})^{-1}$ and multiplication in $\Sigma$ are all open and continuous that $\phi_{P_\alpha}$ is also open and continuous. Therefore, in either case, $\phi_{P_\alpha}\colon B_\alpha \times T \to q^{-1}(B_\alpha)$ is a homeomorphism.

Part~\cref{item: ample G twist} is immediate, because every ample groupoid has a basis of compact open bisections.
\end{proof}

We define a notion of an isomorphism of discrete twists in an analogous way to the classical version.

\begin{definition} \label{def: isomorphic twists}
Let $G$ be a Hausdorff \'etale groupoid. We say that two discrete twists $(\Sigma, i, q)$ and $(\Sigma', i', q')$ by $T \le R^\times$ over $G$ are \hl{isomorphic} if there exists a groupoid isomorphism\footnote{We say that $\psi\colon \Sigma \to \Sigma'$ is a \hl{groupoid isomorphism} if it is a homeomorphism such that $\psi(\delta\varepsilon) = \psi(\delta) \psi(\varepsilon)$ for all $(\delta,\varepsilon) \in \Sigmac$.} $\psi\colon \Sigma \to \Sigma'$ such that the following diagram commutes.
\[
\begin{tikzcd}
\Go \times T \arrow{r}{i} \arrow[equal]{d} & \Sigma\arrow{r}{q}\arrow{d}{\psi} & G\arrow[equal]{d} \\
\Go \times T \arrow{r}{i'} & \Sigma'\arrow{r}{q'} & G
\end{tikzcd}
\]
\end{definition}

It is natural to ask whether there is a correspondence between discrete twists over a groupoid and locally constant $2$-cocycles which can be used to ``twist'' the multiplication in Steinberg algebras, given the shared terminology. As one familiar with the literature would expect, we can readily build a twist over a Hausdorff \'etale groupoid from a locally constant $2$-cocycle. To demonstrate this, we adapt the construction outlined in \cite[Example~11.1.5]{Sims2020} to the setting where the continuous $2$-cocycle maps into a discrete group $T \le R^\times$ (rather than $\T$).

\begin{example} \label{eg: twisted groupoid from cocycle}
Let $G$ be a Hausdorff \'etale groupoid, and let $\sigma\colon \Gc \to T \le R^\times$ be a continuous $2$-cocycle. Let $G \times_\sigma T$ be the set $G \times T$ endowed with the product topology, with multiplication given by
\[
(\alpha,z)(\beta,w) \coloneqq (\alpha\beta, \, \sigma(\alpha,\beta) \, zw),
\]
and inversion given by
\[
(\alpha,z)^{-1} \coloneqq (\alpha^{-1}, \, \sigma(\alpha,\alpha^{-1})^{-1} \, z^{-1}) = (\alpha^{-1}, \, \sigma(\alpha^{-1},\alpha)^{-1} \, z^{-1}),
\]
for all $(\alpha,\beta) \in \Gc$ and $z, w \in T$. Then $G \times_\sigma T$ is a Hausdorff groupoid. In fact, unlike in the classical setting, $G$ being \'etale implies that $G \times_\sigma T$ is \'etale, because for each $z \in T$ and bisection $U$ of $G$, $r\restr{U \times \{z\}}$ is a homeomorphism onto $r(U) \times \{1\}$. Define $i\colon \Go \times T \to G \times_\sigma T$ by $i(x,z) \coloneqq (x,z)$, and $q\colon G \times_\sigma T\to G$ by $q(\gamma,z) \coloneqq \gamma$. Then $q$ is easily verified to be a quotient map, and since $\sigma$ is normalised, $i$ is an injective groupoid homomorphism. Just as in \cite[Example~11.1.5]{Sims2020}, it is routine to then check that $(G \times_\sigma T, i,q)$ is a discrete twist by $T$ over $G$.
\end{example}

\Cref{eg: twisted groupoid from cocycle} shows that any locally constant $2$-cocycle on a Hausdorff \'etale groupoid $G$ gives rise to a discrete twist over $G$; the converse is true when $G$ is additionally second-countable and ample. The proof of this fact and its consequences will be the focus of the remainder of this subsection.

Before we proceed, we need two technical results regarding the left and right group actions of $T$ on $\Sigma$ that are induced by the map $i\colon \Go \times T \to \Sigma$. Identifying $\Sigmao$ with $\Go$, these actions are given by
\[
z \cdot \varepsilon \coloneqq i(r(\varepsilon),z) \, \varepsilon \quad \text{ and } \quad \varepsilon \cdot z \coloneqq \varepsilon \, i(s(\varepsilon),z),
\]
for each $z \in T$ and $\varepsilon \in \Sigma$. Since the image of $i$ is central in $\Sigma$, we have $z \cdot \varepsilon = \varepsilon \cdot z$, and $(z \cdot \varepsilon)(w \cdot \delta) = (zw) \cdot (\varepsilon\delta)$ for all $(\varepsilon,\delta) \in \Sigmac$ and $z, w \in T$.

\begin{lemma} \label{lemma: twist isomorphisms respect T-action}
Let $G$ be a Hausdorff \'etale groupoid. Suppose that $(\Sigma_1, i_1, q_1)$ and $(\Sigma_2, i_2, q_2)$ are discrete twists by $T \le R^\times$ over $G$, and that $\psi\colon \Sigma_1 \to \Sigma_2$ is an isomorphism of twists, as defined in \cref{def: isomorphic twists}. Then $\psi$ respects the action of $T$, in the sense that $\psi(z \cdot \varepsilon) = z \cdot \psi(\varepsilon)$ for all $z \in T$ and $\varepsilon \in \Sigma_1$.
\end{lemma}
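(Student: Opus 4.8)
The plan is to reduce the statement to the two commutativity relations encoded in the defining diagram of \cref{def: isomorphic twists}, namely $\psi \circ i_1 = i_2$ and $q_2 \circ \psi = q_1$, together with the fact that $\psi$ is a groupoid homomorphism. First I would fix $z \in T$ and $\varepsilon \in \Sigma_1$ and unwind the definition of the left action, writing $z \cdot \varepsilon = i_1(r(\varepsilon),z)\,\varepsilon$, where $r(\varepsilon)$ is the range of $\varepsilon$ in $\Sigma_1$, identified with an element of $\Go$ via $q_1\restr{\Sigma_1^{(0)}}$. The goal is then to move $\psi$ past the product $i_1(r(\varepsilon),z)\,\varepsilon$ and recognise the result as $z \cdot \psi(\varepsilon)$.

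Before applying $\psi$, I would verify that $\big(i_1(r(\varepsilon),z),\,\varepsilon\big)$ is a composable pair in $\Sigma_1$: since $i_1$ is a groupoid homomorphism from the group bundle $\Go \times T$, the element $i_1(r(\varepsilon),z)$ has both range and source equal to $i_1(r(\varepsilon),1) \in \Sigma_1^{(0)}$, which is identified with $r(\varepsilon)$ and hence coincides with the range of $\varepsilon$. This legitimises the use of the homomorphism property of $\psi$. The core computation is then short: applying $\psi$ gives $\psi(z \cdot \varepsilon) = \psi\big(i_1(r(\varepsilon),z)\big)\,\psi(\varepsilon)$; the relation $\psi \circ i_1 = i_2$ turns the first factor into $i_2(r(\varepsilon),z)$; and assembling these yields $\psi(z \cdot \varepsilon) = i_2(r(\varepsilon),z)\,\psi(\varepsilon)$, which I want to identify with $z \cdot \psi(\varepsilon) = i_2\big(r(\psi(\varepsilon)),z\big)\,\psi(\varepsilon)$.

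There is no substantive obstacle; the only point requiring care is the bookkeeping around the identifications $\Sigma_1^{(0)} \cong \Go \cong \Sigma_2^{(0)}$, which is exactly what is needed to match $r(\varepsilon)$ with $r(\psi(\varepsilon))$ in the last display. I would make this explicit by noting that since $q_2 \circ \psi = q_1$ and each $q_j$ restricts to a homeomorphism of unit spaces onto $\Go$, the map $\psi$ restricts to the identity on unit spaces under these identifications; applying $q_2$ to $\psi(\varepsilon)$ and using $q_2 \circ \psi = q_1$ then shows that $r(\varepsilon)$ and $r(\psi(\varepsilon))$ name the same point of $\Go$. Hence $i_2(r(\varepsilon),z) = i_2\big(r(\psi(\varepsilon)),z\big)$, and the computation closes to give $\psi(z \cdot \varepsilon) = z \cdot \psi(\varepsilon)$, as required.
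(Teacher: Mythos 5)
Your proposal is correct and takes essentially the same route as the paper's proof, which likewise applies the homomorphism property of $\psi$ together with $i_2 = \psi \circ i_1$ to compute $\psi(z \cdot \varepsilon) = \psi\big(i_1(r(\varepsilon),z)\,\varepsilon\big) = i_2(r(\varepsilon),z)\,\psi(\varepsilon) = z \cdot \psi(\varepsilon)$. The only difference is that you spell out the composability check and the unit-space identification $r(\varepsilon) = r(\psi(\varepsilon))$ via $q_2 \circ \psi = q_1$, which the paper leaves implicit.
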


\begin{proof}
Since $\psi\colon \Sigma_1 \to \Sigma_2$ is an isomorphism of twists, we have $i_2 = \psi \circ i_1$. Thus, for all $z \in T$ and $\varepsilon \in \Sigma_1$, we have $\psi(z \cdot \varepsilon) = \psi\big(i_1(r(\varepsilon),z) \, \varepsilon\big) = i_2(r(\varepsilon),z) \, \psi(\varepsilon) = z \cdot \psi(\varepsilon)$.
\end{proof}

The following result is inspired by \cite[Lemma~11.1.3]{Sims2020}.

\begin{lemma} \label{lemma: q and action}
Let $G$ be a Hausdorff \'etale groupoid, and let $(\Sigma, i, q)$ be a discrete twist by $T \le R^\times$ over $G$. Suppose that $\delta, \varepsilon \in \Sigma$ satisfy $q(\delta) = q(\varepsilon)$. Then $r(\delta) = r(\varepsilon)$, and there is a unique $z \in T$ such that $\varepsilon = z \cdot \delta$.
\end{lemma}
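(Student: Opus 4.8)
The plan is to exploit the fact that $q$ restricts to a homeomorphism---hence an injection---on unit spaces, so that an equality of units in $G$ lifts back to an equality of units in $\Sigma$, and then to use exactness (\cref{def: twist}\cref{item: exactness}) to realise $\varepsilon \delta^{-1}$ as a point in the image of $i$. This mirrors the structure of the classical argument in \cite[Lemma~11.1.3]{Sims2020}.

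First I would establish both $r(\delta) = r(\varepsilon)$ and $s(\delta) = s(\varepsilon)$. Since $q$ is a groupoid homomorphism, $q(r(\delta)) = r(q(\delta)) = r(q(\varepsilon)) = q(r(\varepsilon))$, and similarly for the sources; as $r(\delta), r(\varepsilon) \in \Sigmao$ and $q\restr{\Sigmao}$ is injective, this forces $r(\delta) = r(\varepsilon)$, and likewise $s(\delta) = s(\varepsilon)$. The first assertion of the lemma is then immediate.

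Next, since $s(\varepsilon) = s(\delta) = r(\delta^{-1})$, the product $\varepsilon \delta^{-1}$ is defined in $\Sigma$, and $q(\varepsilon \delta^{-1}) = q(\varepsilon) \, q(\delta)^{-1} = q(\delta) \, q(\delta)^{-1} = r(q(\delta))$, a unit $x \coloneqq r(q(\delta)) \in \Go$. Hence $\varepsilon \delta^{-1} \in q^{-1}(x)$, and exactness gives $q^{-1}(x) = i(\{x\} \times T)$, so there exists $z \in T$ with $\varepsilon \delta^{-1} = i(x, z)$; this $z$ is unique because $i$ is injective. Under the identification of $\Sigmao$ with $\Go$ via $q\restr{\Sigmao}$, the unit $r(\delta) = r(\varepsilon)$ corresponds to $q(r(\delta)) = x$, so $i(x,z) = i(r(\delta), z)$. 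Finally, right-multiplying $\varepsilon \delta^{-1} = i(r(\delta), z)$ by $\delta$ and using $s(\varepsilon) = s(\delta)$ yields $\varepsilon = i(r(\delta), z)\,\delta = z \cdot \delta$, giving existence. For uniqueness, if $z \cdot \delta = z' \cdot \delta$ then $i(r(\delta), z)\,\delta = i(r(\delta), z')\,\delta$; right-multiplying by $\delta^{-1}$ and appealing again to the injectivity of $i$ forces $z = z'$.

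The steps are all routine once the groupoid arithmetic is set up correctly; the only point requiring genuine care---and the main potential obstacle---is the consistent handling of the identification $\Sigmao \cong \Go$, which is precisely what lets us pass between the unit $x = r(q(\delta)) \in \Go$ produced after applying $q$ and the element $r(\delta) \in \Sigmao$ appearing in the definition of the action $z \cdot \delta = i(r(\delta), z)\,\delta$. Verifying that $(\varepsilon, \delta^{-1})$ and $(i(r(\delta),z), \delta)$ are genuinely composable pairs, rather than merely formally writing the products, is the other place to proceed attentively.
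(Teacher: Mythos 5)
Your proof is correct and takes essentially the same route as the paper's: both deduce $r(\delta) = r(\varepsilon)$ from the injectivity of $q\restr{\Sigmao}$, apply exactness to $\varepsilon\delta^{-1} \in q^{-1}\big(r(q(\varepsilon))\big)$ to write $\varepsilon\delta^{-1} = i(r(\varepsilon), z)$ for a unique $z \in T$, and recover $\varepsilon = z \cdot \delta$ under the identification of $\Sigmao$ with $\Go$. Your explicit verifications of composability and of the uniqueness of $z$ in the equation $\varepsilon = z \cdot \delta$ (via right-multiplication by $\delta^{-1}$ and injectivity of $i$) are details the paper leaves implicit, and are harmless additions.
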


\begin{proof}
Fix $\delta, \varepsilon \in \Sigma$ such that $q(\delta) = q(\varepsilon)$. Then $q(r(\delta)) = r(q(\delta)) = r(q(\varepsilon)) = q(r(\varepsilon))$, and hence $r(\delta) = r(\varepsilon)$, because $q$ restricts to a homeomorphism of unit spaces. Thus $q(\varepsilon\delta^{-1}) = q(\varepsilon) q(\varepsilon)^{-1} = r(q(\varepsilon)) \in \Go$, and hence there is a unique element $z \in T$ such that $\varepsilon\delta^{-1} = i\big(r(q(\varepsilon)),z\big)$. By identifying $\Sigmao$ with $\Go$, we obtain $\varepsilon = i(r(\varepsilon),z) \, \delta = z \cdot \delta$.
\end{proof}

Notice that in the case where $\Sigma$ is the twist $G \times_\sigma T$ described in \cref{eg: twisted groupoid from cocycle}, we can check \cref{lemma: q and action} directly. Identifying $\Sigmao = \Go \times \{1\}$ with $\Go$, we have
\[
z \cdot (\alpha,w) = i(r(\alpha),z) (\alpha,w) = (r(\alpha),z)(\alpha,w) = (\alpha,zw),
\]
for all $z \in T$ and $(\alpha,w) \in \Sigma$. If $q(\delta) = q(\varepsilon)$ for some $\delta, \varepsilon \in \Sigma$, then $\delta = (\alpha,w_1)$ and $\varepsilon = (\alpha,w_2)$ for some $\alpha \in G$ and unique $w_1, w_2 \in T$. Since $T$ is a group, there is a unique $z \in T$ such that $z w_1 = w_2$, and hence $z \cdot \delta = (\alpha,zw_1) = \varepsilon$.

Our key tool in what follows will be a \hl{(continuous) global section}; that is, a continuous map $P\colon G\to \Sigma$ satisfying $q \circ P = \id_G$ and $P(\Go) \subseteq \Sigmao = i(\Go\times\{1\})$. Our next result shows that every discrete twist admitting a continuous global section is isomorphic to a discrete twist coming from a locally constant $2$-cocycle, as described in \cref{eg: twisted groupoid from cocycle}. Parts of this result are inspired by the analogous classical versions in \cite[Section~4]{Kumjian1986} and \cite[Chapter~11]{Sims2020}.

\begin{prop} \label{prop: section induces cocycle}
Let $G$ be a Hausdorff \'etale groupoid, and let $(\Sigma, i, q)$ be a discrete twist by $T \le R^\times$ over $G$. Suppose that $\Sigma$ is \hl{topologically trivial}, in the sense that it admits a continuous global section $P\colon G \to \Sigma$. Then the following conditions hold.
\begin{enumerate}[label=(\alph*)]
\item \label{item: P induces sigma} The continuous global section $P$ preserves composability, and induces a continuous $2$-cocycle $\sigma\colon \Gc \to T$ satisfying
\[
P(\alpha) P(\beta) P(\alpha\beta)^{-1} = i\big(r(\alpha), \, \sigma(\alpha,\beta)\big),
\]
for all $(\alpha,\beta) \in \Gc$.
\item \label{item: P properties} For all $(\alpha,\beta) \in \Gc$, we have
\[
P(\alpha) P(\beta) = \sigma(\alpha,\beta) \cdot P(\alpha\beta) \quad \text{ and } \quad P(\alpha)^{-1} = \sigma(\alpha,\alpha^{-1})^{-1} \cdot P(\alpha^{-1}).
\]
\item \label{item: P induces phi_P} Let $(G \times_\sigma T, i_\sigma, q_\sigma)$ be the discrete twist from \cref{eg: twisted groupoid from cocycle}. The map $\phi_P\colon G \times_\sigma T \to \Sigma$ defined by $\phi_P(\alpha,z) \coloneqq z \cdot P(\alpha)$ gives an isomorphism of the twists $G \times_\sigma T$ and $\Sigma$.
\end{enumerate}
\end{prop}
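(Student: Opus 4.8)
The plan is to take the three parts in order, using the defining relation in part~\cref{item: P induces sigma} as the organising principle and then propagating it through the central $T$-action.

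For part~\cref{item: P induces sigma}, I would first check that $P$ preserves composability: if $(\alpha,\beta) \in \Gc$, then $q(s(P(\alpha))) = s(\alpha) = r(\beta) = q(r(P(\beta)))$, and since $q$ restricts to a bijection of unit spaces this forces $s(P(\alpha)) = r(P(\beta))$, so $(P(\alpha),P(\beta)) \in \Sigmac$; the same argument shows $P(\alpha)P(\beta)$ is composable with $P(\alpha\beta)^{-1}$. Applying the homomorphism $q$ to $P(\alpha)P(\beta)P(\alpha\beta)^{-1}$ yields $\alpha\beta(\alpha\beta)^{-1} = r(\alpha) \in \Go$, so by \cref{def: twist}\cref{item: exactness} this product lies in $q^{-1}(r(\alpha)) = i(\{r(\alpha)\} \times T)$; injectivity of $i$ then produces a unique $\sigma(\alpha,\beta) \in T$ with $P(\alpha)P(\beta)P(\alpha\beta)^{-1} = i(r(\alpha),\sigma(\alpha,\beta))$. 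Continuity of $\sigma$ follows because $(\alpha,\beta) \mapsto P(\alpha)P(\beta)P(\alpha\beta)^{-1}$ is continuous with image inside $i(\Go \times T)$, and $i$ is a homeomorphism onto an open set by \cref{lemma: twist properties}\cref{item: i is open}, so post-composing with $i^{-1}$ and projecting to $T$ is continuous, hence locally constant since $T$ is discrete.

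Rearranging the defining relation and using $r(P(\alpha\beta)) = r(\alpha)$ to rewrite $i(r(\alpha),\cdot)$ as an action gives $P(\alpha)P(\beta) = \sigma(\alpha,\beta) \cdot P(\alpha\beta)$, which is the first identity of part~\cref{item: P properties}. The $2$-cocycle identity then drops out of associativity in $\Sigma$: expanding $P(\alpha)P(\beta)P(\gamma)$ both as $(P(\alpha)P(\beta))P(\gamma)$ and as $P(\alpha)(P(\beta)P(\gamma))$ and repeatedly applying this identity together with $(z\cdot\varepsilon)(w\cdot\delta) = (zw)\cdot(\varepsilon\delta)$ yields $\sigma(\alpha,\beta)\sigma(\alpha\beta,\gamma) \cdot P(\alpha\beta\gamma) = \sigma(\alpha,\beta\gamma)\sigma(\beta,\gamma) \cdot P(\alpha\beta\gamma)$; since the $T$-action is free (if $z\cdot\varepsilon = w\cdot\varepsilon$, then $i(r(\varepsilon),z) = i(r(\varepsilon),w)$, so $z=w$ by injectivity of $i$), the identity follows. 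Normalisation comes from specialising: $P(r(\gamma))P(\gamma)P(\gamma)^{-1} = P(r(\gamma)) = i(r(\gamma),1)$ because $P(\Go) \subseteq \Sigmao$, forcing $\sigma(r(\gamma),\gamma) = 1$, and symmetrically $\sigma(\gamma,s(\gamma)) = 1$. For the second identity of part~\cref{item: P properties}, I would put $\beta = \alpha^{-1}$ in the first identity to get $P(\alpha)P(\alpha^{-1}) = \sigma(\alpha,\alpha^{-1}) \cdot r(P(\alpha))$, multiply on the left by $P(\alpha)^{-1}$, apply $\varepsilon(w\cdot\delta)=w\cdot(\varepsilon\delta)$, and then act by $\sigma(\alpha,\alpha^{-1})^{-1}$.

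For part~\cref{item: P induces phi_P}, I would verify the three requirements of \cref{def: isomorphic twists} in turn. Multiplicativity: $\phi_P((\alpha,z)(\beta,w)) = \sigma(\alpha,\beta)zw \cdot P(\alpha\beta)$ equals $(z\cdot P(\alpha))(w\cdot P(\beta)) = (zw)\cdot(P(\alpha)P(\beta)) = \sigma(\alpha,\beta)zw \cdot P(\alpha\beta)$ by part~\cref{item: P properties}. The diagram commutes because $q(\phi_P(\alpha,z)) = r(\alpha)\alpha = \alpha = q_\sigma(\alpha,z)$ and, using $P(x) = i(x,1)$ for $x \in \Go$, $\phi_P(i_\sigma(x,z)) = z\cdot i(x,1) = i(x,z)$. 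Bijectivity follows from \cref{lemma: q and action}: every $\varepsilon \in \Sigma$ admits a unique $z$ with $\varepsilon = z\cdot P(q(\varepsilon)) = \phi_P(q(\varepsilon),z)$, giving surjectivity, while injectivity uses $q\circ\phi_P = q_\sigma$ to recover $\alpha$ and freeness of the action to recover $z$. Finally $\phi_P$ is continuous since $(\alpha,z)\mapsto i(r(\alpha),z)P(\alpha)$ is, and its inverse is continuous because $i(r(\varepsilon),z_\varepsilon) = \varepsilon\,P(q(\varepsilon))^{-1}$ depends continuously on $\varepsilon$ while $i^{-1}$ is continuous on the open image of $i$. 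The main obstacle is the careful bookkeeping in part~\cref{item: P induces sigma}, namely checking that all the relevant products are composable and consistently identifying $\Sigmao$ with $\Go$; once that is in place, the cocycle identity and the isomorphism in part~\cref{item: P induces phi_P} are essentially forced by associativity in $\Sigma$ and freeness of the $T$-action.
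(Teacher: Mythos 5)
Your proposal is correct and takes essentially the same route as the paper's proof: composability of the lifts via the unit-space homeomorphism, existence and uniqueness of $\sigma(\alpha,\beta)$ from exactness (the paper packages this as \cref{lemma: q and action}, which you also invoke in part~(c)), continuity via $\pi_2 \circ i^{-1}$ using \cref{lemma: twist properties}\cref{item: i is open}, normalisation from $P(\Go) \subseteq \Sigmao$, and the identical battery of checks for $\phi_P$. The only cosmetic difference is the $2$-cocycle identity: you first establish the part~(b) identity $P(\alpha)P(\beta) = \sigma(\alpha,\beta) \cdot P(\alpha\beta)$ and derive the identity from associativity in $\Sigma$ together with freeness of the $T$-action, whereas the paper telescopes $P(\alpha)P(\beta)P(\gamma)P(\beta\gamma)^{-1}P(\alpha)^{-1}$ and invokes centrality explicitly --- the same computation, differently packaged (your use of $(z\cdot\varepsilon)(w\cdot\delta) = (zw)\cdot(\varepsilon\delta)$ absorbs the centrality step).
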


\begin{proof}
For \cref{item: P induces sigma}, fix $(\alpha,\beta) \in \Gc$. Since $q \circ P = \id_G$ and $q$ is a groupoid homomorphism that restricts to a homeomorphism of unit spaces, we have
\[
q(s(P(\alpha))) = s(q(P(\alpha))) = s(\alpha) = r(\beta) = r(q(P(\beta))) = q(r(P(\beta))),
\]
and hence $(P(\alpha), P(\beta)) \in \Sigmac$. We have
\[
q\big(P(\alpha) P(\beta) P(\alpha\beta) ^{-1}\big) = q(P(\alpha)) \, q(P(\beta)) \, q(P(\alpha\beta))^{-1} = \alpha\beta (\alpha\beta)^{-1} = r(\alpha) = q\big(P(r(\alpha))\big),
\]
and so \cref{lemma: q and action} implies that there is a unique element $\sigma(\alpha,\beta) \in T$ such that
\begin{equation} \label{eqn: action of cocycle}
P(\alpha) P(\beta) P(\alpha\beta)^{-1} = \sigma(\alpha,\beta) \cdot P(r(\alpha)) = i\big(r(\alpha), \, \sigma(\alpha,\beta)\big).
\end{equation}
Therefore, $\sigma(\alpha,\beta) = (\pi_2 \circ i^{-1})\big(P(\alpha) P(\beta) P(\alpha\beta)^{-1}\big)$, where $\pi_2$ is the projection of $\Go \times T$ onto the second coordinate. Since $i$ is a homeomorphism onto its image by \cref{lemma: twist properties}\cref{item: i is open}, we deduce that $\sigma$ is continuous because it is a composition of continuous functions.

To check that $\sigma$ satisfies the $2$-cocycle identity, we fix $(\alpha,\beta,\gamma) \in G^{(3)}$ and show that
\[
\sigma(\beta,\gamma) = \sigma(\alpha,\beta) \, \sigma(\alpha\beta,\gamma) \, \sigma(\alpha,\beta\gamma)^{-1}.
\]
Since the image of $i$ is central in $\Sigma$, we have
\begin{equation} \label{eqn: P(alpha) commutes with i term}
i\big(r(\alpha), \, \sigma(\beta,\gamma)\big) \, P(\alpha) = P(\alpha) \, i\big(s(\alpha), \, \sigma(\beta,\gamma)\big) = P(\alpha) \, i\big(r(\beta), \, \sigma(\beta,\gamma)\big).
\end{equation}
Using \cref{eqn: P(alpha) commutes with i term} for the first equality below and \cref{eqn: action of cocycle} for the second and fourth equalities, we obtain
\begin{align*}
i\big(r(\alpha), \, \sigma(\beta,\gamma)\big) &= P(\alpha) \, i\big(r(\beta), \, \sigma(\beta,\gamma)\big) \, P(\alpha)^{-1} \\
&= P(\alpha) P(\beta) P(\gamma) P(\beta\gamma)^{-1} P(\alpha)^{-1} \\
&= \big(P(\alpha) P(\beta) P(\alpha\beta)^{-1}\big) \big(P(\alpha\beta) P(\gamma) P(\alpha\beta\gamma)^{-1}\big) \big(P(\alpha\beta\gamma) P(\beta\gamma)^{-1} P(\alpha)^{-1}\big) \\
&= i\big(r(\alpha), \, \sigma(\alpha,\beta)\big) \, i\big(r(\alpha\beta), \, \sigma(\alpha\beta,\gamma)\big) \, i\big(r(\alpha), \, \sigma(\alpha, \beta\gamma)\big)^{-1} \\
&= i\big(r(\alpha), \, \sigma(\alpha,\beta) \, \sigma(\alpha\beta,\gamma) \, \sigma(\alpha,\beta\gamma)^{-1}\big).
\end{align*}
Thus, by the injectivity of $i$, we deduce that $\sigma$ satisfies the $2$-cocycle identity.

To see that $\sigma$ is normalised, first note that for all $\alpha$ in $G$,
\begin{equation} \label{eqn: showing that sigma is normalised}
q\big(i\big(r(\alpha), \, \sigma(r(\alpha),\alpha)\big)\big) = q\big(i\big(r(\alpha), \, \sigma(\alpha,s(\alpha))\big)\big) = q\big(i(r(\alpha),1)\big) = r(\alpha),
\end{equation}
and $i(r(\alpha),1) \in \Sigmao$. Moreover, by \cref{eqn: action of cocycle}, we have
\[
i\big(r(\alpha), \, \sigma(r(\alpha),\alpha)\big) = P(r(\alpha)) P(\alpha) P(r(\alpha) \alpha)^{-1} = P(r(\alpha)) \in \Sigmao,
\]
and, since $P(s(\alpha)) \in \Sigmao$,
\[
i\big(r(\alpha), \, \sigma(\alpha,s(\alpha))\big) = P(\alpha) P(s(\alpha)) P(\alpha s(\alpha))^{-1} = P(\alpha) P(\alpha)^{-1} = r(P(\alpha)) \in \Sigmao.
\]
Since $q$ restricts to a homeomorphism of unit spaces and $i$ is injective, we deduce from \cref{eqn: showing that sigma is normalised} that for all $\alpha \in G$,
\[
\sigma(r(\alpha),\alpha) = \sigma(\alpha,s(\alpha)) = 1.
\]

For \cref{item: P properties}, fix $(\alpha,\beta) \in \Gc$. Then \cref{eqn: action of cocycle} implies that
\[
P(\alpha) P(\beta) = i\big(r(\alpha\beta), \, \sigma(\alpha,\beta)\big) \, P(\alpha\beta) = \sigma(\alpha,\beta) \cdot P(\alpha\beta),
\]
and also that
\[
P(\alpha) P(\alpha^{-1}) P(\alpha\alpha^{-1})^{-1} = i\big(r(\alpha), \, \sigma(\alpha,\alpha^{-1})\big).
\]
Since $P(\alpha\alpha^{-1})^{-1} = P(r(\alpha)) \in \Sigmao$, we deduce that
\[
P(\alpha)^{-1} = P(\alpha^{-1}) \, i\big(r(\alpha), \, \sigma(\alpha,\alpha^{-1})\big)^{-1} = P(\alpha^{-1}) \cdot \sigma(\alpha,\alpha^{-1})^{-1} = \sigma(\alpha,\alpha^{-1})^{-1} \cdot P(\alpha^{-1}).
\]

For \cref{item: P induces phi_P}, define $\phi_P\colon G \times_\sigma T \to \Sigma$ by $\phi_P(\alpha,z) \coloneqq z \cdot P(\alpha) = i(r(\alpha),z) \, P(\alpha)$. Then $\phi_P$ is continuous, because it is the pointwise product of the continuous maps $i \circ (r \times \id)$ and $P \circ \pi_1$ from $G \times_\sigma T$ to $\Sigma$, where $\pi_1$ is the projection of $G \times_\sigma T$ onto the first coordinate. To see that $\phi_P$ is injective, suppose that $(\alpha,z), (\beta,w) \in \Gc$ satisfy $\phi_P(\alpha,z) = \phi_P(\beta,w)$. Then
\[
\alpha = q(i(r(\alpha),z)) \, q(P(\alpha)) = q(\phi_P(\alpha,z)) = q(\phi_P(\beta,w)) = q(i(r(\beta),w)) \, q(P(\beta)) = \beta.
\]
Therefore,
\[
i(r(\alpha),z) = \phi_P(\alpha,z) \, P(\alpha)^{-1} = \phi_P(\beta,w) \, P(\beta)^{-1} = i(r(\beta),w) = i(r(\alpha),w),
\]
and since $i$ is injective, we have $z = w$. Thus $\phi_P$ is injective. To see that $\phi_P$ is surjective, fix $\varepsilon \in \Sigma$. Since $q(\varepsilon) = q\big(P(q(\varepsilon))\big)$, \cref{lemma: q and action} implies that there exists a unique element $z_\varepsilon \in T$ such that
\[
\phi_P(q(\varepsilon),z_\varepsilon) = z_\varepsilon \cdot P(q(\varepsilon)) = i(r(\varepsilon),z_\varepsilon) \, P(q(\varepsilon)) = \varepsilon.
\]
Thus $\phi_P$ is surjective, and we have $z_\varepsilon = \pi_2\big(i^{-1}\big(\varepsilon \, P(q(\varepsilon))^{-1}\big)\big)$, where $\pi_2$ is the projection of $\Go \times T$ onto the second coordinate. Since $\phi_P^{-1}(\varepsilon) = (q(\varepsilon),z_\varepsilon)$ and \cref{lemma: twist properties}\cref{item: i is open} implies that $i^{-1}$ is continuous on the image of $i$, we deduce that $\phi_P^{-1}$ is continuous, because it is a composition of continuous maps. Hence $\phi_P$ is a homeomorphism.

To see that $\phi_P$ is also a groupoid homomorphism, fix $(\alpha,\beta) \in \Gc$ and $z, w \in T$. Then, using part~\cref{item: P properties} for the third equality, we have
\begin{align*}
\phi_P(\alpha,z) \, \phi_P(\beta,w) &= (z \cdot P(\alpha)) (w \cdot P(\beta)) \\
&= (zw) \cdot (P(\alpha) P(\beta)) \\
&= (zw) \cdot \big(\sigma(\alpha,\beta) \cdot P(\alpha\beta)\big) \\
&= \big(\sigma(\alpha,\beta) zw\big) \cdot P(\alpha\beta) \\
&= \phi_P\big(\alpha\beta, \, \sigma(\alpha,\beta) zw\big) \\
&= \phi_P\big((\alpha,z)(\beta,w)\big).
\end{align*}
Hence $\phi_P$ is a groupoid isomorphism.

We conclude by showing that $\phi_P\circ i_\sigma = i$ and $q \circ \phi_P = q_\sigma$. Recall from \cref{eg: twisted groupoid from cocycle} that $i_\sigma\colon \Go \times T \to G \times_\sigma T$ is the inclusion map and $q_\sigma\colon G \times_\sigma T \to G$ is the projection onto the first coordinate. Fix $\alpha \in G$ and $w \in T$. Since $P(r(\alpha)) \in \Sigmao$, we have
\[
(\phi_P \circ i_\sigma)(r(\alpha),w) = \phi_P(r(\alpha),w) = i(r(\alpha),w) \, P(r(\alpha)) = i(r(\alpha),w),
\]
and
\[
(q \circ \phi_P)(\alpha,w) = q\big(i(r(\alpha),w) \, P(\alpha)\big) = r(\alpha) \alpha = \alpha = q_\sigma(\alpha,w).
\]
Therefore, $\Sigma$ and $G \times_\sigma T$ are isomorphic as twists over $G$.
\end{proof}

As one might expect, all discrete twists constructed from locally constant $2$-cocycles (as in \cref{eg: twisted groupoid from cocycle}) are topologically trivial, as we now prove.

\begin{lemma} \label{lemma: trivial section induces sigma}
Let $G$ be a Hausdorff \'etale groupoid, and let $\sigma\colon \Gc \to T \le R^\times$ be a continuous $2$-cocycle. The twist $(G \times_\sigma T, i, q)$ described in \cref{eg: twisted groupoid from cocycle} is topologically trivial, and the map $S\colon \gamma \mapsto (\gamma,1)$ is a continuous global section from $G$ to $G \times_\sigma T$ that induces $\sigma$.
\end{lemma}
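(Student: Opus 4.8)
The plan is to dispatch the statement in two stages: first check directly that $S$ is a continuous global section (which is exactly what ``topologically trivial'' means), and then compute that the $2$-cocycle induced by $S$ via \cref{prop: section induces cocycle} is $\sigma$ itself.

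For the first stage, recall from \cref{eg: twisted groupoid from cocycle} that $G \times_\sigma T$ carries the product topology, that $q(\gamma,z) = \gamma$, and that $i(x,z) = (x,z)$, so that $\Sigmao = i(\Go \times \{1\}) = \{(x,1) : x \in \Go\}$. The map $S\colon \gamma \mapsto (\gamma,1)$ is continuous since its two coordinate functions ($\id_G$ and the constant map $1$) are continuous. It satisfies $q \circ S = \id_G$, and for each $x \in \Go$ we have $S(x) = (x,1) = i(x,1) \in \Sigmao$. Hence $S$ is a continuous global section and $G \times_\sigma T$ is topologically trivial.

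For the second stage, \cref{prop: section induces cocycle}\cref{item: P induces sigma} tells us that $S$ induces a continuous $2$-cocycle, which I will temporarily denote $\sigma_S\colon \Gc \to T$, characterised by $S(\alpha) S(\beta) S(\alpha\beta)^{-1} = i\big(r(\alpha),\sigma_S(\alpha,\beta)\big)$ for all $(\alpha,\beta) \in \Gc$. I would compute the left-hand side using the multiplication and inversion formulas of \cref{eg: twisted groupoid from cocycle}. Multiplying gives $S(\alpha) S(\beta) = (\alpha,1)(\beta,1) = \big(\alpha\beta,\sigma(\alpha,\beta)\big)$, while inversion gives $S(\alpha\beta)^{-1} = \big((\alpha\beta)^{-1},\sigma(\alpha\beta,(\alpha\beta)^{-1})^{-1}\big)$. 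Forming the triple product and using $r(\alpha\beta) = r(\alpha)$ then yields
\[
S(\alpha) S(\beta) S(\alpha\beta)^{-1} = \big(r(\alpha\beta),\,\sigma(\alpha\beta,(\alpha\beta)^{-1})\,\sigma(\alpha,\beta)\,\sigma(\alpha\beta,(\alpha\beta)^{-1})^{-1}\big) = \big(r(\alpha),\sigma(\alpha,\beta)\big) = i\big(r(\alpha),\sigma(\alpha,\beta)\big).
\]
Comparing this with the defining relation for $\sigma_S$ and invoking the injectivity of $i$ gives $\sigma_S(\alpha,\beta) = \sigma(\alpha,\beta)$ for all $(\alpha,\beta) \in \Gc$, so $S$ induces $\sigma$, as claimed.

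There is no genuine obstacle here; the only point requiring care is bookkeeping of the $T$-valued factors in the triple product, specifically noticing that the factor $\sigma(\alpha\beta,(\alpha\beta)^{-1})$ contributed by the inversion of $S(\alpha\beta)$ is cancelled exactly by the factor produced when multiplying by $S(\alpha\beta)^{-1}$, leaving precisely $\sigma(\alpha,\beta)$.
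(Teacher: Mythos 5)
Your proposal is correct and follows essentially the same route as the paper's proof: verify directly that $S$ is a continuous global section, then compute $S(\alpha)S(\beta)S(\alpha\beta)^{-1} = \big(r(\alpha),\sigma(\alpha,\beta)\big)$ using the multiplication and inversion formulas of \cref{eg: twisted groupoid from cocycle}, with the factor $\sigma(\alpha\beta,(\alpha\beta)^{-1})$ cancelling exactly as in the paper's computation. The only (harmless) difference is that you spell out the continuity and section properties of $S$, which the paper dismisses as clear, and you explicitly invoke the injectivity of $i$ at the final comparison.
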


\begin{proof}
It is clear that $S\colon G \to G \times_\sigma T$ is a continuous global section, and so $G \times_\sigma T$ is topologically trivial. By \cref{prop: section induces cocycle}\cref{item: P induces sigma}, $S$ induces a continuous $2$-cocycle $\omega\colon \Gc \to T$ satisfying $S(\alpha) S(\beta) S(\alpha\beta)^{-1} = i\big(r(\alpha), \, \omega(\alpha,\beta)\big) = (r(\alpha), \, \omega(\alpha,\beta))$, for all $(\alpha,\beta) \in \Gc$. To see that $S$ induces $\sigma$, fix $(\alpha,\beta) \in \Gc$. Then
\begin{align*}
(r(\alpha), \, \omega(\alpha,\beta)) &= S(\alpha) S(\beta) S(\alpha\beta)^{-1} \\
&= (\alpha,1) (\beta,1) (\alpha\beta,1)^{-1} \\
&= (\alpha\beta, \, \sigma(\alpha,\beta)) \, \big((\alpha\beta)^{-1}, \, \sigma(\alpha\beta, (\alpha\beta)^{-1})^{-1}\big) \\
&= \big(r(\alpha\beta), \, \sigma(\alpha\beta, (\alpha\beta)^{-1}) \, \sigma(\alpha,\beta) \, \sigma(\alpha\beta, (\alpha\beta)^{-1})^{-1}\big) \\
&= (r(\alpha), \, \sigma(\alpha,\beta)).
\end{align*}
Therefore, $\sigma = \omega$, and so $S$ induces $\sigma$.
\end{proof}

Together, \cref{prop: section induces cocycle,lemma: trivial section induces sigma} give us a one-to-one correspondence between discrete twists over a Hausdorff \'etale groupoid $G$ that admit a continuous global section and discrete twists over $G$ arising from locally constant $2$-cocycles on $G$.

As we shall see in \cref{thm: folklore}, it turns out that all discrete twists over a second-countable ample Hausdorff groupoid $G$ admit a continuous global section. We are grateful to Elizabeth Gillaspy for alerting us to this folklore fact for $T = \T_d$, citing conversations with Alex Kumjian. Because we know of no proofs in the literature, we give a detailed proof here in the discrete setting.

\begin{thm} \label{thm: folklore}
Let $G$ be a second-countable ample Hausdorff groupoid, and let $(\Sigma, i, q)$ be a discrete twist by $T \le R^\times$ over $G$. Then $\Sigma$ is topologically trivial.
\end{thm}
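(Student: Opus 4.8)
The plan is to patch together the local sections supplied by the locally trivial structure of $\Sigma$ into a single continuous global section. Since $G$ is ample, \cref{lemma: twist properties}\cref{item: ample G twist,item: P_alpha units} let us choose, for each $\alpha \in G$, a \emph{compact} open bisection $B_\alpha$ of $G$ containing $\alpha$ together with a continuous local section $P_\alpha\colon B_\alpha \to \Sigma$ satisfying $q \circ P_\alpha = \id_{B_\alpha}$ and $P_\alpha(\Go \cap B_\alpha) \subseteq \Sigmao$. The family $\{B_\alpha\}_{\alpha \in G}$ is then an open cover of $G$, and the task is to glue the $P_\alpha$ into a globally defined continuous section.

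Next I would invoke second-countability: a second-countable space is Lindel\"of, so the cover $\{B_\alpha\}$ admits a countable subcover $\{B_n\}_{n \in \mathbb{N}}$ with associated local sections $P_n\colon B_n \to \Sigma$. The crucial step is to disjointify this countable cover into clopen pieces. Because $G$ is Hausdorff, each compact open bisection $B_n$ is in fact clopen, so finite unions of the $B_n$ are clopen as well. Setting $D_1 \coloneqq B_1$ and $D_n \coloneqq B_n {\setminus} (B_1 \cup \dots \cup B_{n-1})$ for $n \ge 2$, we obtain mutually disjoint compact open sets $D_n \subseteq B_n$ whose union is $G$.

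I would then define $P\colon G \to \Sigma$ by $P\restr{D_n} \coloneqq P_n\restr{D_n}$, which is unambiguous since the $D_n$ are disjoint. As each $D_n$ is open and $P$ agrees there with the continuous map $P_n$, the pasting lemma for open covers shows that $P$ is continuous. The identities $q \circ P\restr{D_n} = q \circ P_n\restr{D_n} = \id_{D_n}$ assemble to give $q \circ P = \id_G$, and for $x \in \Go$, choosing $n$ with $x \in D_n \subseteq B_n$ yields $P(x) = P_n(x) \in P_n(\Go \cap B_n) \subseteq \Sigmao$. Hence $P$ is a continuous global section, so $\Sigma$ is topologically trivial.

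I expect the main obstacle to be the disjointification step, which is precisely where both standing hypotheses are needed: second-countability (via the Lindel\"of property) reduces the cover to a countable one, so that the inductive subtraction $D_n = B_n {\setminus} \bigcup_{k<n} B_k$ involves only finitely many sets at each stage, while ampleness together with Hausdorffness guarantees that the bisections are clopen, ensuring each $D_n$ is open and that the piecewise map $P$ is genuinely continuous. With an uncountable cover this scheme breaks down, since an arbitrary union of the closed sets $B_k$ need not be closed and the resulting pieces could fail to be open. Notably, no gluing or cocycle-compatibility between the $P_n$ on overlaps is required, precisely because disjointification removes all overlaps.
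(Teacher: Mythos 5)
Your proof is correct and takes essentially the same approach as the paper: it likewise uses \cref{lemma: twist properties} to arrange compact bisections and unit-respecting local sections, passes to a countable cover via the Lindel\"of property, disjointifies it into mutually disjoint compact open (hence clopen) pieces, and pastes the restricted sections together. The only cosmetic difference is that the paper isolates the disjointification step as a separate result (\cref{lemma: disjointify}), phrased as a countable disjoint refinement of an arbitrary open cover.
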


In order to prove \cref{thm: folklore}, we need the following lemma.

\begin{lemma} \label{lemma: disjointify}
Let $G$ be a second-countable ample Hausdorff groupoid, and suppose that $\UU$ is an open cover of $G$. Then $\UU$ has a countable refinement $\{B_j\}_{j=1}^\infty$ of mutually disjoint compact open bisections that form a cover of $G$.
\end{lemma}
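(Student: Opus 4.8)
The plan is to first replace $\UU$ by a refinement consisting of compact open bisections, then thin this out to a countable subcover using second-countability, and finally disjointify the resulting countable family by the standard set-theoretic subtraction, using the Hausdorff hypothesis throughout to keep everything clopen and compact.

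First I would invoke ampleness: since $G$ has a basis of compact open bisections, each $\gamma \in G$ lies in some $U \in \UU$ and hence in a compact open bisection contained in $U$. Thus the collection $\mathcal{B}$ of all compact open bisections contained in some member of $\UU$ is an open cover of $G$ refining $\UU$. As $G$ is second-countable it is Lindel\"of, so $\mathcal{B}$ has a countable subcover $\{C_k\}_{k=1}^\infty$; by construction each $C_k$ is a compact open bisection contained in some member of $\UU$.

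The key step is the disjointification. Setting $B_k \coloneqq C_k \setminus \bigcup_{i=1}^{k-1} C_i$, I would argue as follows. Each finite union $\bigcup_{i<k} C_i$ is open (a union of opens) and compact (a finite union of compacts), hence clopen because $G$ is Hausdorff; since $C_k$ is itself clopen, $B_k$ is an intersection of clopen sets and so is clopen, and being a closed subset of the compact set $C_k$ it is compact. Moreover $B_k \subseteq C_k$, and any subset of a bisection is a bisection (the same ambient open set witnessing that $C_k$ is a bisection also witnesses it for $B_k$), so each $B_k$ is a compact open bisection. The sets are mutually disjoint by construction, and since a disjointification of a cover is again a cover we get $\bigcup_k B_k = \bigcup_k C_k = G$; as each $B_k$ lies in the same member of $\UU$ as $C_k$, the refinement property is inherited. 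Discarding any empty $B_k$ (and reindexing, appending empty sets if an infinite indexing is desired) yields the claimed family.

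I do not expect a serious obstacle here: the only points requiring care are that finite unions of compact open sets are clopen---which is exactly where Hausdorffness enters---and that the subtraction preserves the bisection property, both of which are routine. (If $G$ is compact the family will be finite, which we may regard as a countable family as noted above.)
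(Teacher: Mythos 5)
Your proposal is correct and follows essentially the same route as the paper's proof: refine to compact open bisections via ampleness, extract a countable subcover via the Lindel\"of property, and disjointify by iterated set subtraction. The paper states these steps tersely, whereas you also supply the details it leaves implicit---that finite unions of compact open sets are clopen in a Hausdorff space (so the differences stay compact, open, and bisections) and that empty sets may need discarding or appending to match the stated indexing---all of which is accurate.
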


\begin{proof}
Let $\UU$ be an open cover of $G$. By possibly passing to a refinement, we may assume that $\UU$ consists of compact open bisections. Since $G$ is second-countable, it is Lindel\"of, and so we may assume that $\UU = \{D_j\}_{j=1}^\infty$, where each $D_j$ is a compact open bisection of $G$. Define $B_1 \coloneqq D_1$, and for each $n \ge 2$, define $B_n \coloneqq D_n {\setminus} \medcup_{i=1}^{n-1} B_i$. Then each $B_j$ is a compact open bisection contained in $D_j$, and $\{B_j\}_{i=j}^\infty$ forms a disjoint cover of $G$.
\end{proof}

\begin{proof}[Proof of \cref{thm: folklore}]
Recall from \cref{def: twist}\cref{item: P_alpha} that for each $\alpha \in G$, there exists an open bisection $D_\alpha \subseteq G$ containing $\alpha$, and a continuous local section $P_\alpha\colon D_\alpha \to \Sigma$ such that the map $\phi_\alpha\colon D_\alpha \times T \to q^{-1}(D_\alpha)$ given by $\phi_\alpha(\beta,z) \coloneqq i(r(\beta),z) \, P_\alpha(\beta) = z \cdot P_\alpha(\beta)$ is a homeomorphism. Since $G$ is ample, we may assume that each $D_\alpha$ is compact, by \cref{lemma: twist properties}\cref{item: ample G twist}. By \cref{lemma: twist properties}\cref{item: P_alpha units}, we may assume that $P_\alpha(\Go \cap D_\alpha) \subseteq \Sigmao$ for each $\alpha \in G$. By \cref{lemma: disjointify}, $\{D_\alpha\}_{\alpha \in G}$ has a countable refinement $\{B_j\}_{j=1}^\infty$ consisting of mutually disjoint compact open bisections that form a cover of $G$. For each $j \ge 1$, choose $\alpha_j \in G$ such that $B_j \subseteq D_{\alpha_j}$, and define $P_j \coloneqq P_{\alpha_j}\restr{B_j}$. For each $\beta \in G$, there is a unique $j_{\beta} \ge 1$ such that $\beta \in B_{j_\beta}$, and hence the map $P\colon G \to \Sigma$ given by $P(\beta) \coloneqq P_{j_\beta}(\beta)$ is well-defined. Since $q(P(\beta)) = q(P_{j_\beta}(\beta)) = \beta = \id_G(\beta)$ for all $\beta \in G$, and $P_j(\Go \cap B_j) \subseteq \Sigmao$ for each $j \ge 1$, $P$ is a global section. To see that $P$ is continuous, let $U$ be an open subset of $\Sigma$. Then $P^{-1}(U) = \medcup_{j=1}^\infty \, P_j^{-1}(U) = \medcup_{j=1}^\infty \, \big(P_{\alpha_j}^{-1}(U) \cap B_j\big)$. Since each $P_{\alpha_j}$ is continuous and each $B_j$ is open, $P^{-1}(U)$ is open in $G$. Hence $P$ is a continuous global section, and $\Sigma$ is topologically trivial.
\end{proof}

\subsection{Twists and \texorpdfstring{$2$-cocycles}{2-cocycles}} \label{section: twists and 2-cocycles}

In this section we restrict our attention to discrete twists arising from locally constant $2$-cocycles, and we investigate the relationships between such twists. In particular, we prove the following theorem.

\begin{thm} \label{thm: cohomology}
Let $G$ be a Hausdorff \'etale groupoid, and let $\sigma, \tau\colon \Gc \to T \le R^\times$ be continuous $2$-cocycles. The following are equivalent:
\begin{enumerate}[label=(\arabic*)]
\item \label{item: thm isomorphic} $G \times_\sigma T \cong G \times_\tau T$;
\item \label{item: thm cohomologous} $\sigma$ is cohomologous to $\tau$; and
\item \label{item: thm induced} $\sigma$ is induced by a continuous global section $P\colon G \to G \times_\tau T$.
\end{enumerate}
\end{thm}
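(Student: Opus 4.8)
The plan is to prove the cyclic chain $\ref{item: thm isomorphic} \Rightarrow \ref{item: thm induced} \Rightarrow \ref{item: thm cohomologous} \Rightarrow \ref{item: thm isomorphic}$, using throughout the canonical section machinery: by \cref{lemma: trivial section induces sigma}, each $G \times_\nu T$ ($\nu \in \{\sigma,\tau\}$) is topologically trivial via the section $S_\nu\colon \gamma \mapsto (\gamma,1)$, which induces $\nu$ in the sense of \cref{prop: section induces cocycle}\cref{item: P induces sigma}. Recall also from the discussion preceding \cref{lemma: twist isomorphisms respect T-action} that the $T$-action satisfies $(z \cdot \varepsilon)(w \cdot \delta) = (zw)\cdot(\varepsilon\delta)$; from centrality of $i$ one likewise gets $(z\cdot\varepsilon)^{-1} = z^{-1}\cdot\varepsilon^{-1}$, which I would record first.

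For $\ref{item: thm isomorphic} \Rightarrow \ref{item: thm induced}$, given a twist isomorphism $\psi\colon G \times_\sigma T \to G \times_\tau T$, I would simply set $P \coloneqq \psi \circ S_\sigma$. The commuting-diagram relations $q_\tau \circ \psi = q_\sigma$ and $\psi \circ i_\sigma = i_\tau$ from \cref{def: isomorphic twists}, together with the fact that $\psi$ carries units to units, show at once that $P$ is a continuous global section into $G \times_\tau T$. Since $\psi$ is a groupoid homomorphism and $S_\sigma$ induces $\sigma$, I compute $P(\alpha)P(\beta)P(\alpha\beta)^{-1} = \psi\big(S_\sigma(\alpha)S_\sigma(\beta)S_\sigma(\alpha\beta)^{-1}\big) = \psi\big(i_\sigma(r(\alpha),\sigma(\alpha,\beta))\big) = i_\tau(r(\alpha),\sigma(\alpha,\beta))$, so by injectivity of $i_\tau$ the cocycle induced by $P$ (via \cref{prop: section induces cocycle}\cref{item: P induces sigma}) is exactly $\sigma$.

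For $\ref{item: thm induced} \Rightarrow \ref{item: thm cohomologous}$, suppose $\sigma$ is induced by a continuous global section $P\colon G \to G \times_\tau T$. Since $q_\tau \circ P = q_\tau \circ S_\tau = \id_G$, \cref{lemma: q and action} yields a unique $b(\gamma) \in T$ with $P(\gamma) = b(\gamma)\cdot S_\tau(\gamma)$; concretely $P(\gamma) = (\gamma, b(\gamma))$, so $b$ is continuous (being the second-coordinate projection of $P$) and $b\restr{\Go} = 1$ because $P$ maps units to units. Using $S_\tau(\alpha)S_\tau(\beta) = \tau(\alpha,\beta)\cdot S_\tau(\alpha\beta)$ from \cref{prop: section induces cocycle}\cref{item: P properties} together with the two action identities above, I would expand $P(\alpha)P(\beta)P(\alpha\beta)^{-1}$ and collapse the three section terms onto the single unit $S_\tau(\alpha\beta)S_\tau(\alpha\beta)^{-1} = i_\tau(r(\alpha),1)$, obtaining $i_\tau\big(r(\alpha),\, b(\alpha)b(\beta)\tau(\alpha,\beta)b(\alpha\beta)^{-1}\big)$. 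Comparing with $i_\tau(r(\alpha),\sigma(\alpha,\beta))$ and invoking injectivity of $i_\tau$ gives $\sigma(\alpha,\beta) = b(\alpha)b(\beta)\tau(\alpha,\beta)b(\alpha\beta)^{-1}$; since $T \le R^\times$ is abelian this rearranges to the cohomology identity, so $\sigma$ is cohomologous to $\tau$.

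For $\ref{item: thm cohomologous} \Rightarrow \ref{item: thm isomorphic}$, given continuous $b\colon G \to T$ with $b\restr{\Go}=1$ and $\sigma(\alpha,\beta)\tau(\alpha,\beta)^{-1} = b(\alpha)b(\beta)b(\alpha\beta)^{-1}$, I would define $\psi\colon G \times_\sigma T \to G \times_\tau T$ by $\psi(\gamma,z) \coloneqq (\gamma, b(\gamma)z)$ and verify directly that it is a twist isomorphism: it is a homeomorphism with inverse $(\gamma,z)\mapsto(\gamma,b(\gamma)^{-1}z)$, the single computation checking $\psi\big((\alpha,z)(\beta,w)\big) = \psi(\alpha,z)\psi(\beta,w)$ is precisely the cohomology relation, and $b\restr{\Go}=1$ gives $\psi\circ i_\sigma = i_\tau$ while the first-coordinate formula gives $q_\tau\circ\psi = q_\sigma$. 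The main obstacle is the bookkeeping in $\ref{item: thm induced} \Rightarrow \ref{item: thm cohomologous}$: correctly tracking how the $T$-action interacts with multiplication and inversion in $\Sigma$ so that the product of three sections telescopes cleanly; the remaining implications are essentially formal once the canonical sections of \cref{lemma: trivial section induces sigma} are in hand.
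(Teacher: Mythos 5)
Your proposal is correct and takes essentially the same route as the paper, which proves the identical cyclic chain \cref{item: thm isomorphic} $\implies$ \cref{item: thm induced} $\implies$ \cref{item: thm cohomologous} $\implies$ \cref{item: thm isomorphic} via \cref{lemma: isomorphic twists each admit section inducing other cocycle,lemma: section implies cohomologous,lemma: cohomologous implies isomorphism}, with the same constructions $P \coloneqq \psi \circ S$, $b \coloneqq \pi_2 \circ P$, and $\psi(\gamma,z) \coloneqq (\gamma, \, b(\gamma) z)$. The only difference is cosmetic: in \cref{item: thm induced} $\implies$ \cref{item: thm cohomologous} you telescope abstractly using the $T$-action identities, whereas the paper computes $(\alpha, b(\alpha)) \, (\beta, b(\beta)) \, (\alpha\beta, b(\alpha\beta))^{-1}$ directly in the coordinates of $G \times_\tau T$; both yield $\sigma(\alpha,\beta) = \tau(\alpha,\beta) \, b(\alpha) \, b(\beta) \, b(\alpha\beta)^{-1}$.
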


We will split the proof of this theorem into three lemmas. This proof has notable overlap with \cite[Section~4]{Kumjian1986} for the case where $R = \C_d$ and $T = \T_d$, particularly for the equivalence of \cref{item: thm cohomologous} and \cref{item: thm induced}. However, the two formulations are sufficiently different to warrant independent treatment here.

The following lemma expands on an argument given in \cite[Remark~11.1.6]{Sims2020} showing that the cohomology class of a continuous $2$-cocycle $\sigma\colon \Gc \to T \le R^\times$ can always be recovered from the discrete twist $G \times_\sigma T$.

\begin{lemma} \label{lemma: section implies cohomologous}
Let $G$ be a Hausdorff \'etale groupoid, and let $\tau\colon \Gc \to T \le R^\times$ be a continuous $2$-cocycle. Suppose that $P\colon G \to G \times_\tau T$ is a continuous global section, and that $\sigma\colon \Gc \to T$ is the induced continuous $2$-cocycle satisfying
\[
i\big(r(\alpha), \, \sigma(\alpha,\beta)\big) = P(\alpha) P(\beta) P(\alpha\beta)^{-1}
\]
for all $(\alpha,\beta) \in \Gc$, as in \cref{prop: section induces cocycle}. Then $\sigma$ is cohomologous to $\tau$.
\end{lemma}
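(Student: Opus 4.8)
The plan is to exploit the very concrete description of the twist $G \times_\tau T$ from \cref{eg: twisted groupoid from cocycle}, in which the quotient map $q$ is simply projection onto the first coordinate. Since $q \circ P = \id_G$, any continuous global section $P\colon G \to G \times_\tau T$ must have the form $P(\gamma) = (\gamma, b(\gamma))$ for some function $b\colon G \to T$. Continuity of $P$ (with respect to the product topology on $G \times T$) forces $b$ to be continuous, being the composition of $P$ with projection onto the second coordinate; and the global-section requirement $P(\Go) \subseteq \Sigmao = i(\Go \times \{1\})$ forces $b(x) = 1$ for all $x \in \Go$. This $b$ is precisely the candidate for the continuous function witnessing that $\sigma$ is cohomologous to $\tau$.

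With $P$ written in this form, I would substitute it directly into the defining relation $i(r(\alpha), \sigma(\alpha,\beta)) = P(\alpha) P(\beta) P(\alpha\beta)^{-1}$ and compute the right-hand side using the multiplication and inversion formulas of \cref{eg: twisted groupoid from cocycle}. First, $P(\alpha) P(\beta) = (\alpha\beta, \, \tau(\alpha,\beta) \, b(\alpha) \, b(\beta))$, and $P(\alpha\beta)^{-1} = ((\alpha\beta)^{-1}, \, \tau(\alpha\beta,(\alpha\beta)^{-1})^{-1} \, b(\alpha\beta)^{-1})$. Multiplying these and noting that $r(\alpha\beta) = r(\alpha)$, the two factors of $\tau(\alpha\beta,(\alpha\beta)^{-1})$ cancel, leaving
\[
P(\alpha) P(\beta) P(\alpha\beta)^{-1} = \big(r(\alpha), \, \tau(\alpha,\beta) \, b(\alpha) \, b(\beta) \, b(\alpha\beta)^{-1}\big).
\]
Since $i(r(\alpha), \sigma(\alpha,\beta)) = (r(\alpha), \sigma(\alpha,\beta))$, comparing second coordinates yields $\sigma(\alpha,\beta) = \tau(\alpha,\beta) \, b(\alpha) \, b(\beta) \, b(\alpha\beta)^{-1}$ for every $(\alpha,\beta) \in \Gc$.

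Rearranging this identity gives $\sigma(\alpha,\beta) \, \tau(\alpha,\beta)^{-1} = b(\alpha) \, b(\beta) \, b(\alpha\beta)^{-1}$, which, together with the continuity of $b$ and the fact that $b\restr{\Go} = 1$ established in the first step, is exactly the defining condition for $\sigma$ and $\tau$ to be cohomologous. The argument is therefore essentially a direct computation, and the only real point requiring care is the bookkeeping in the inversion step, where one must verify that the terms involving $\tau(\alpha\beta,(\alpha\beta)^{-1})$ cancel so that the $2$-cocycle $\tau$ contributes only through the factor $\tau(\alpha,\beta)$. No substantive obstacle arises beyond this routine verification.
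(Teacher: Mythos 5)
Your proposal is correct and takes essentially the same route as the paper's own proof: both write $P(\gamma) = (\gamma, b(\gamma))$ (i.e.\ define $b = \pi_2 \circ P$), note that $b$ is continuous with $b\restr{\Go} \equiv 1$ because $P$ is a continuous global section, and then perform the identical computation in $G \times_\tau T$, in which the $\tau\big(\alpha\beta,(\alpha\beta)^{-1}\big)$ factors cancel to yield $\sigma(\alpha,\beta) = \tau(\alpha,\beta)\, b(\alpha)\, b(\beta)\, b(\alpha\beta)^{-1}$. There are no gaps; your verification of the cancellation in the inversion step matches the paper's line-by-line calculation.
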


\begin{proof}
To see that $\sigma$ is cohomologous to $\tau$, we will find a continuous function $b\colon G \to T$ satisfying $b(x) = 1$ for all $x \in \Go$, and
\[
\sigma(\alpha,\beta) = \tau(\alpha,\beta) \, b(\alpha) \, b(\beta) \, b(\alpha\beta)^{-1}
\]
for all $(\alpha,\beta) \in \Gc$. For each $\gamma \in G$, let $b(\gamma)$ be the unique element of $T$ such that $P(\gamma) = (\gamma, b(\gamma))$. Since $P(\Go) \subseteq \Go \times \{1\}$, we have $b(x) = 1$ for all $x \in \Go$. Since $b = \pi_2 \circ P$, where $\pi_2$ is the projection of $G \times_\tau T$ onto the second coordinate, $b$ is continuous. For all $(\alpha,\beta) \in \Gc$, we have
\begin{align*}
i\big(r(\alpha), \, \sigma(\alpha,\beta)\big) &= P(\alpha) P(\beta) P(\alpha\beta)^{-1} \\
&= (\alpha, b(\alpha)) \, (\beta, b(\beta)) \, (\alpha\beta, b(\alpha\beta))^{-1} \\
&= \big(\alpha\beta, \, \tau(\alpha,\beta) \, b(\alpha) \, b(\beta) \big) \big((\alpha\beta)^{-1}, \, \tau(\alpha\beta,(\alpha\beta)^{-1})^{-1} \, b(\alpha\beta)^{-1}\big) \\
&= \big(\alpha\beta (\alpha\beta)^{-1}, \, \tau(\alpha\beta, (\alpha\beta)^{-1}) \, \tau(\alpha,\beta) \, b(\alpha) \, b(\beta) \, \tau(\alpha\beta,(\alpha\beta)^{-1})^{-1} \, b(\alpha\beta)^{-1}\big) \\
&= \big(r(\alpha), \, \tau(\alpha,\beta) \, b(\alpha) \, b(\beta) \, b(\alpha\beta)^{-1}\big).
\end{align*}
Thus, noting that $i\colon \Go \times T \to G \times_\sigma T$ is the inclusion map, we deduce that
\[
\sigma(\alpha,\beta) = \tau(\alpha,\beta) \, b(\alpha) \, b(\beta) \, b(\alpha\beta)^{-1}
\]
for all $(\alpha,\beta) \in \Gc$, as required.
\end{proof}

We now show that cohomologous locally constant $2$-cocycles give rise to isomorphic twists.

\begin{lemma} \label{lemma: cohomologous implies isomorphism}
Let $G$ be a Hausdorff \'etale groupoid, and let $\sigma,\tau\colon \Gc \to T \le R^\times$ be continuous $2$-cocycles. If $\sigma$ is cohomologous to $\tau$, then the discrete twists $G \times_\sigma T$ and $G \times_\tau T$ are isomorphic.
\end{lemma}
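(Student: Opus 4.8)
The plan is to build the isomorphism explicitly from the transition function witnessing the cohomology. Since $\sigma$ is cohomologous to $\tau$, there is a continuous function $b\colon G \to T$ with $b(x) = 1$ for all $x \in \Go$ and
\[
\sigma(\alpha,\beta) \, \tau(\alpha,\beta)^{-1} = b(\alpha) \, b(\beta) \, b(\alpha\beta)^{-1}
\]
for all $(\alpha,\beta) \in \Gc$. I would define $\psi\colon G \times_\sigma T \to G \times_\tau T$ by $\psi(\gamma,z) \coloneqq (\gamma, \, b(\gamma) \, z)$, and claim that this is an isomorphism of twists in the sense of \cref{def: isomorphic twists}.

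The key verification is that $\psi$ is a groupoid homomorphism. Using the multiplication in $G \times_\tau T$ from \cref{eg: twisted groupoid from cocycle}, for $(\alpha,\beta) \in \Gc$ and $z,w \in T$ I would compute
\[
\psi(\alpha,z) \, \psi(\beta,w) = (\alpha, b(\alpha)z)(\beta, b(\beta)w) = \big(\alpha\beta, \, \tau(\alpha,\beta) \, b(\alpha) \, b(\beta) \, zw\big),
\]
while
\[
\psi\big((\alpha,z)(\beta,w)\big) = \psi\big(\alpha\beta, \, \sigma(\alpha,\beta) \, zw\big) = \big(\alpha\beta, \, b(\alpha\beta) \, \sigma(\alpha,\beta) \, zw\big).
\]
These agree precisely when $\tau(\alpha,\beta) \, b(\alpha) \, b(\beta) = b(\alpha\beta) \, \sigma(\alpha,\beta)$, which rearranges to the cohomology identity above. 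Here I would stress that the rearrangement uses that $T \le R^\times$ is abelian (as $R$ is commutative), so the scalars may be freely reordered; this is the one place where care is needed, and it is the crux of the argument. I would also note that $\psi$ sends composable pairs to composable pairs, since it fixes the $G$-coordinate.

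It then remains to check the routine points. The map $\psi$ is a bijection with inverse $(\gamma,z) \mapsto (\gamma, b(\gamma)^{-1} z)$, and both $\psi$ and $\psi^{-1}$ are continuous because $b$ is continuous (hence locally constant) and $T$ carries the discrete topology, so $\psi$ is a homeomorphism. Finally, to see that the diagram in \cref{def: isomorphic twists} commutes, I would use $b(x) = 1$ for $x \in \Go$ to get $\psi(i_\sigma(x,z)) = \psi(x,z) = (x, b(x)z) = (x,z) = i_\tau(x,z)$, and since $\psi$ fixes the first coordinate, $q_\tau(\psi(\gamma,z)) = \gamma = q_\sigma(\gamma,z)$. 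Thus $\psi$ is an isomorphism of twists, completing the proof. I expect no serious obstacle beyond the bookkeeping with $b$ and the abelianness of $T$; the argument is essentially the twisted analogue of the algebra isomorphism constructed in \cref{lemma: cohomologous twisted Steinberg algebras}.
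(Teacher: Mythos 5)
Your proposal is correct and matches the paper's proof essentially step for step: the same map $\psi(\gamma,z) = (\gamma, b(\gamma)z)$, the same homomorphism verification via the rearranged cohomology identity $b(\alpha\beta)\,\sigma(\alpha,\beta) = \tau(\alpha,\beta)\,b(\alpha)\,b(\beta)$, the same inverse, and the same use of $b\restr{\Go} \equiv 1$ to check the commuting diagram. The only cosmetic difference is the continuity argument, where the paper expresses $\psi$ as a pointwise product of continuous maps while you invoke the discreteness of $T$ directly; both are fine.
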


\begin{proof}
Suppose that $\sigma$ is cohomologous to $\tau$. Then there is a continuous function $b\colon G \to T$ satisfying $b(x) = 1$ for all $x \in \Go$, and
\begin{equation} \label{eqn: sigma sim tau via b}
b(\alpha\beta) \, \sigma(\alpha,\beta) = \tau(\alpha,\beta) \, b(\alpha) \, b(\beta)
\end{equation}
for all $(\alpha,\beta) \in \Gc$. Define $\psi\colon G \times_\sigma T \to G \times_\tau T$ by $\psi(\alpha,z) \coloneqq (\alpha, b(\alpha) z)$. Then $\psi$ is bijective, with inverse given by $\psi^{-1}(\alpha,z) \coloneqq (\alpha, b(\alpha)^{-1} z)$. Since $\psi(\alpha,z) = (r(\alpha), b(\alpha)) (\alpha,z)$, $\psi$ is continuous, because it is the pointwise product of the continuous map $(r \times b) \circ \pi_1$ and the identity map, where $\pi_1$ is the projection of $G \times_\sigma T$ onto the first coordinate. A similar argument shows that $\psi^{-1}$ is continuous, and thus $\psi$ is a homeomorphism.

To see that $\psi$ is a groupoid homomorphism, fix $(\alpha,\beta) \in \Gc$ and $z, w \in T$. Using \cref{eqn: sigma sim tau via b} for the third equality, we obtain
\begin{align*}
\psi((\alpha,z)(\beta,w)) &= \psi(\alpha\beta, \, \sigma(\alpha,\beta) \, zw) \\
&= (\alpha\beta, \, b(\alpha\beta) \, \sigma(\alpha,\beta) \, zw) \\
&= (\alpha\beta, \, \tau(\alpha,\beta) \, b(\alpha) \, b(\beta) \, zw) \\
&= (\alpha, b(\alpha)z) \, (\beta, b(\beta)w) \\
&= \psi(\alpha,z) \, \psi(\beta,w),
\end{align*}
as required.

We have now shown that $G \times_\sigma T$ and $G \times_\tau T$ are isomorphic as groupoids. To see that they are isomorphic as discrete twists, let $i_\sigma\colon \Go \times T \to G \times_\sigma T$ and $i_\tau\colon \Go \times T \to G \times_\tau T$ be the inclusion maps, and let $q_\sigma \colon G \times_\sigma T \to G$ and $q_\tau \colon G \times_\tau T \to G$ be the projections onto the first coordinate. Since $b(x) = 1$ for all $x \in \Go$, we have
\[
\psi(i_\sigma(x,z)) = (x, b(x) z) = (x,z) = i_\tau(x,z),
\]
and
\[
q_\tau(\psi(\alpha,z)) = q_\tau(\alpha, b(\alpha) z) = \alpha = q_\sigma(\alpha),
\]
for all $x \in \Go$, $\alpha \in G$, and $z \in T$. Therefore, $\psi$ is an isomorphism of the twists $G \times_\sigma T$ and $G \times_\tau T$.
\end{proof}

Finally, we show that if $\sigma$ and $\tau$ are locally constant $2$-cocycles on $G$ giving rise to isomorphic discrete twists $G \times_\sigma T$ and $G \times_\tau T$, then $G \times_\tau T$ admits a continuous global section that induces $\sigma$.

\begin{lemma} \label{lemma: isomorphic twists each admit section inducing other cocycle}
Let $G$ be a Hausdorff \'etale groupoid, and let $\sigma,\tau\colon \Gc \to T \le R^\times$ be continuous $2$-cocycles. If $(G \times_\sigma T, i_\sigma, q_\sigma)$ and $(G \times_\tau T, i_\tau, q_\tau)$ are isomorphic as twists, then $\sigma$ is induced by a continuous global section $P\colon G \to G \times_\tau T$.
\end{lemma}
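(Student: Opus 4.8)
The plan is to build the required section of $G \times_\tau T$ by transporting the canonical section of $G \times_\sigma T$ across the given twist isomorphism. By hypothesis and \cref{def: isomorphic twists}, there is a groupoid isomorphism $\psi\colon G \times_\sigma T \to G \times_\tau T$ satisfying $\psi \circ i_\sigma = i_\tau$ and $q_\tau \circ \psi = q_\sigma$. By \cref{lemma: trivial section induces sigma}, the map $S\colon \gamma \mapsto (\gamma,1)$ is a continuous global section of $G \times_\sigma T$ that induces $\sigma$. I would then set $P \coloneqq \psi \circ S$ and claim that $P$ is a continuous global section of $G \times_\tau T$ that induces $\sigma$.

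First I would check that $P$ is a continuous global section. Continuity is immediate, as $P$ is a composition of continuous maps. The computation $q_\tau \circ P = q_\tau \circ \psi \circ S = q_\sigma \circ S = \id_G$ shows that $P$ is a section. For $x \in \Go$ we have $S(x) = (x,1) = i_\sigma(x,1)$, and hence $P(x) = \psi\big(i_\sigma(x,1)\big) = i_\tau(x,1) \in i_\tau(\Go \times \{1\})$, which is the unit space of $G \times_\tau T$; thus $P(\Go)$ lies in the unit space, as required of a continuous global section.

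The heart of the argument is to show that $P$ induces $\sigma$. Since $S$ induces $\sigma$, for every $(\alpha,\beta) \in \Gc$ we have $S(\alpha) S(\beta) S(\alpha\beta)^{-1} = i_\sigma\big(r(\alpha), \, \sigma(\alpha,\beta)\big)$. Applying the groupoid homomorphism $\psi$ to both sides, and using that $\psi$ preserves products and inverses together with $\psi \circ i_\sigma = i_\tau$, yields
\[
P(\alpha) P(\beta) P(\alpha\beta)^{-1} = i_\tau\big(r(\alpha), \, \sigma(\alpha,\beta)\big).
\]
By \cref{prop: section induces cocycle}\cref{item: P induces sigma}, the continuous global section $P$ induces a continuous $2$-cocycle characterised by exactly this relation, and the element of $T$ appearing on the right-hand side is unique because $i_\tau$ is injective. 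Comparing the two descriptions forces the cocycle induced by $P$ to equal $\sigma$, which completes the proof.

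I do not anticipate a serious obstacle: once the section is transported across $\psi$, both the section axioms and the induced-cocycle identity follow formally from the twist-isomorphism relations and the groupoid-homomorphism property of $\psi$. The only point meriting care is the observation that a groupoid isomorphism automatically respects inversion (so the inverse terms $P(\alpha\beta)^{-1}$ and $S(\alpha\beta)^{-1}$ correspond correctly under $\psi$), which holds for any bijective groupoid homomorphism.
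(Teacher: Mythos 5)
Your proposal is correct and takes essentially the same route as the paper: both define $P \coloneqq \psi \circ S$ where $S\colon \gamma \mapsto (\gamma,1)$ is the canonical section from \cref{lemma: trivial section induces sigma}, verify the section axioms via $q_\tau \circ \psi = q_\sigma$, apply $\psi$ to the identity $S(\alpha) S(\beta) S(\alpha\beta)^{-1} = i_\sigma\big(r(\alpha),\sigma(\alpha,\beta)\big)$, and conclude from the injectivity of $i_\tau$ that the cocycle induced by $P$ (via \cref{prop: section induces cocycle}) equals $\sigma$. Your closing observation that a bijective groupoid homomorphism automatically preserves inversion is a point the paper leaves implicit, and it is correct.
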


\begin{proof}
Suppose that $\psi\colon G \times_\sigma T \to G \times_\tau T$ is an isomorphism of twists. By \cref{lemma: trivial section induces sigma}, the map $S\colon \gamma \mapsto (\gamma,1)$ is a continuous global section from $G$ to $G \times_\sigma T$ that induces $\sigma$, in the sense that
\begin{equation} \label{eqn: S induces sigma}
S(\alpha) S(\beta) S(\alpha\beta)^{-1} = i_\sigma\big(r(\alpha), \, \sigma(\alpha,\beta)\big)
\end{equation}
for all $(\alpha,\beta) \in \Gc$.

Define $P\coloneqq \psi \circ S \colon G \to G \times_\tau T$. We claim that $P$ is a continuous global section. Since $S$ is a continuous global section and $\psi$ is a groupoid isomorphism, $P$ is continuous and $P(\Go) \subseteq \Go \times \{1\}$. Recall from \cref{eg: twisted groupoid from cocycle} that $q_\sigma\colon G \times_\sigma T \to G$ and $q_\tau\colon G \times_\tau T \to G$ are the projections onto the first coordinate. Since $\psi$ is an isomorphism of twists, we have
\[
q_\tau \circ P = q_\tau \circ (\psi \circ S) = (q_\tau \circ \psi) \circ S = q_\sigma \circ S = \id_G,
\]
and hence $P$ is a continuous global section.

We now show that $P$ induces $\sigma$. By \cref{prop: section induces cocycle}\cref{item: P induces sigma}, $P$ induces a continuous $2$-cocycle $\omega\colon \Gc \to T$ satisfying
\begin{equation} \label{eqn: P induces omega}
P(\alpha) P(\beta) P(\alpha\beta)^{-1} = i_\tau\big(r(\alpha), \, \omega(\alpha,\beta)\big)
\end{equation}
for all $(\alpha,\beta) \in \Gc$. Together, \cref{eqn: P induces omega,eqn: S induces sigma} imply that
\begin{align*}
i_\tau\big(r(\alpha), \, \omega(\alpha,\beta)\big) &= P(\alpha) P(\beta) P(\alpha\beta)^{-1} \\
&= \psi\big(S(\alpha) S(\beta) S(\alpha\beta)^{-1}\big) \\
&= \psi\big(i_\sigma\big(r(\alpha), \, \sigma(\alpha,\beta)\big)\big) \\
&= i_\tau\big(r(\alpha), \, \sigma(\alpha,\beta)\big),
\end{align*}
for all $(\alpha,\beta) \in \Gc$. Since $i_\sigma$ and $i_\tau$ are both injective, we deduce that $\sigma = \omega$, and hence $\sigma$ is induced by $P$.
\end{proof}

We now combine these three lemmas to prove our main theorem for this section.

\begin{proof}[Proof of \cref{thm: cohomology}]
\Cref{lemma: isomorphic twists each admit section inducing other cocycle} gives \cref{item: thm isomorphic} \!\!$\implies$\!\! \cref{item: thm induced}, \cref{lemma: section implies cohomologous} gives \cref{item: thm induced} \!\!$\implies$\!\! \cref{item: thm cohomologous}, and \cref{lemma: cohomologous implies isomorphism} gives \cref{item: thm cohomologous} \!\!$\implies$\!\! \cref{item: thm isomorphic}.
\end{proof}

We conclude this section with a corollary of \cref{thm: cohomology}.

\begin{cor}
Let $G$ be a Hausdorff \'etale groupoid, and let $\Sigma$ be a topologically trivial discrete twist by $T \le R^\times$ over $G$. Suppose that $\sigma,\tau\colon \Gc \to T$ are continuous $2$-cocycles that are induced by continuous global sections $P_\sigma,P_\tau\colon G \to \Sigma$, as in \cref{prop: section induces cocycle}\cref{item: P induces sigma}. Then $\sigma$ is cohomologous to $\tau$.
\end{cor}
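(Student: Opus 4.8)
The plan is to deduce this corollary directly from \cref{thm: cohomology} by exhibiting an isomorphism between the two twists $G \times_\sigma T$ and $G \times_\tau T$. Since $\Sigma$ is topologically trivial and the continuous global sections $P_\sigma, P_\tau \colon G \to \Sigma$ induce $\sigma$ and $\tau$ respectively, \cref{prop: section induces cocycle}\cref{item: P induces phi_P} supplies twist isomorphisms $\phi_{P_\sigma} \colon G \times_\sigma T \to \Sigma$ and $\phi_{P_\tau} \colon G \times_\tau T \to \Sigma$, given by $\phi_{P_\sigma}(\alpha,z) = z \cdot P_\sigma(\alpha)$ and $\phi_{P_\tau}(\alpha,z) = z \cdot P_\tau(\alpha)$. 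This is the crux: both $\sigma$ and $\tau$ are realised through the \emph{same} ambient twist $\Sigma$, so each is tied to $\Sigma$ by a twist isomorphism.

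Next I would observe that isomorphism of discrete twists (in the sense of \cref{def: isomorphic twists}) is an equivalence relation: the inverse of a twist isomorphism and the composite of two twist isomorphisms are again twist isomorphisms, since both operations preserve the commuting triangles relating the maps $i$ and $q$. Consequently the composite $\phi_{P_\tau}^{-1} \circ \phi_{P_\sigma} \colon G \times_\sigma T \to G \times_\tau T$ is an isomorphism of twists, whence $G \times_\sigma T \cong G \times_\tau T$. Applying the equivalence \cref{item: thm isomorphic} $\Leftrightarrow$ \cref{item: thm cohomologous} of \cref{thm: cohomology} then yields that $\sigma$ is cohomologous to $\tau$, completing the argument.

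There is essentially no serious obstacle here; the only point requiring care is the routine verification that twist isomorphisms compose and invert within the class of twist isomorphisms, which is immediate from the diagram in \cref{def: isomorphic twists}. If one preferred a self-contained argument avoiding \cref{thm: cohomology}, one could instead build the cohomology witness directly. For each $\gamma \in G$ both $P_\sigma(\gamma)$ and $P_\tau(\gamma)$ lie in $q^{-1}(\gamma)$, so \cref{lemma: q and action} produces a unique $b(\gamma) \in T$ with $P_\sigma(\gamma) = b(\gamma) \cdot P_\tau(\gamma)$; one checks that $b$ is continuous (it equals $\pi_2 \circ i^{-1}$ applied to $P_\sigma(\gamma) P_\tau(\gamma)^{-1} = i(r(\gamma), b(\gamma))$, using \cref{lemma: twist properties}\cref{item: i is open}) and trivial on $\Go$. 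Substituting this into the defining relation $P_\sigma(\alpha) P_\sigma(\beta) P_\sigma(\alpha\beta)^{-1} = i(r(\alpha), \sigma(\alpha,\beta))$ from \cref{prop: section induces cocycle}\cref{item: P induces sigma} and its $\tau$-analogue, and invoking the centrality of the image of $i$ together with the identity $(z \cdot \varepsilon)(w \cdot \delta) = (zw) \cdot (\varepsilon\delta)$, one reads off $\sigma(\alpha,\beta) = \tau(\alpha,\beta) \, b(\alpha) \, b(\beta) \, b(\alpha\beta)^{-1}$, exhibiting $\sigma$ and $\tau$ as cohomologous. I would present the first, shorter route as the main proof and relegate this direct computation to a remark if needed.
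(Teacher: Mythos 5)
Your proposal is correct and follows essentially the same route as the paper, which also obtains $G \times_\sigma T \cong \Sigma \cong G \times_\tau T$ from \cref{prop: section induces cocycle}\cref{item: P induces phi_P} and then invokes \cref{thm: cohomology}. Your explicit verification that twist isomorphisms compose and invert (so that the two isomorphisms with $\Sigma$ combine into one between $G \times_\sigma T$ and $G \times_\tau T$) merely spells out a step the paper leaves implicit, and your alternative direct construction of the coboundary $b$ via \cref{lemma: q and action} is a valid but unnecessary addition.
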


\begin{proof}
By \cref{prop: section induces cocycle}\cref{item: P induces phi_P}, we have $G \times_\sigma T \cong \Sigma \cong G \times_\tau T$, and hence \cref{thm: cohomology} implies that $\sigma$ is cohomologous to $\tau$.
\end{proof}

\subsection{Twisted Steinberg algebras arising from discrete twists} \label{section: Steinberg algebras from twists}

In this section we give a construction of a twisted Steinberg algebra $A_R(G;\Sigma)$ coming from a topologically trivial discrete twist $\Sigma$ over an ample Hausdorff groupoid $G$. We prove that if two such twists are isomorphic, then they give rise to isomorphic twisted Steinberg algebras. We also prove that if $\Sigma \cong G \times_\sigma T$ for some continuous $2$-cocycle $\sigma\colon \Gc \to T \le R^\times$, then the twisted Steinberg algebras $A_R(G;\Sigma)$ and $A_R(G,\sigma^{-1})$ are $R$-algebraically isomorphic, where $\sigma^{-1}$ is the continuous $T$-valued $2$-cocycle $(\alpha,\beta) \mapsto \sigma(\alpha,\beta)^{-1}$.

\begin{definition} \label{def: A_R(G;Sigma)}
Let $G$ be an ample Hausdorff groupoid, and let $(\Sigma, i, q)$ be a topologically trivial discrete twist by $T \le R^\times$ over $G$. We say that $f \in C(\Sigma,R)$ is \hl{$T$-equivariant} if $f(z \cdot \varepsilon) = z \, f(\varepsilon)$ for all $z \in T$ and $\varepsilon \in \Sigma$, and we define
\[
A_R(G;\Sigma) \coloneqq \{ f \in C(\Sigma,R)\,:\, \text{$f$ is $T$-equivariant and $\overline{q(\supp(f))}$ is compact} \}.
\]
\end{definition}

We first show that $A_R(G;\Sigma)$ is an $R$-module under the pointwise operations inherited from $C(\Sigma,R)$.

\begin{lemma} \label{lemma: A_R(G;Sigma) is an R-module}
Let $G$ be an ample Hausdorff groupoid, and let $(\Sigma, i, q)$ be a topologically trivial discrete twist by $T \le R^\times$ over $G$. Then $A_R(G;\Sigma)$ is an $R$-submodule of $C(\Sigma,R)$.
\end{lemma}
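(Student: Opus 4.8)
The plan is to apply the standard submodule criterion: I will check that $A_R(G;\Sigma)$ contains the zero function and is closed under addition and under scalar multiplication by elements of $R$. For each operation I verify separately the three defining conditions, namely membership in $C(\Sigma,R)$, $T$-equivariance, and compactness of $\overline{q(\supp(f))}$. The zero function is the easy base case: it is continuous, it satisfies $0(z \cdot \varepsilon) = 0 = z \cdot 0$ for all $z \in T$ and $\varepsilon \in \Sigma$, and its support is empty.

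For closure under addition, I fix $f, g \in A_R(G;\Sigma)$ and consider $f + g$. Continuity is automatic since $C(\Sigma,R)$ is closed under pointwise sums, and $T$-equivariance follows from the direct computation $(f+g)(z \cdot \varepsilon) = f(z \cdot \varepsilon) + g(z \cdot \varepsilon) = z f(\varepsilon) + z g(\varepsilon) = z (f+g)(\varepsilon)$. For the support condition I use $\supp(f+g) \subseteq \supp(f) \cup \supp(g)$, whence $\overline{q(\supp(f+g))} \subseteq \overline{q(\supp(f))} \cup \overline{q(\supp(g))}$; the right-hand side is a finite union of compact sets and hence compact, and $\overline{q(\supp(f+g))}$ is a closed subset of it, so it too is compact.

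For closure under scalar multiplication I fix $r \in R$ and $f \in A_R(G;\Sigma)$ and argue similarly for $rf$. The only step needing care is $T$-equivariance: here I invoke commutativity of $R$ so that $r$ commutes with $z \in T \le R^\times$, giving $(rf)(z \cdot \varepsilon) = r\big(z f(\varepsilon)\big) = (rz) f(\varepsilon) = (zr) f(\varepsilon) = z (rf)(\varepsilon)$. The support condition is immediate from $\supp(rf) \subseteq \supp(f)$, so that $\overline{q(\supp(rf))}$ is a closed subset of the compact set $\overline{q(\supp(f))}$ and is therefore compact.

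No genuine obstacle arises; the proof is routine point-set topology and algebra. The only points worth flagging are the use of commutativity of $R$ in the scalar case of $T$-equivariance, and the two elementary facts---that finite unions of compact sets are compact, and that closed subsets of compact sets are compact---which together keep the compact-support condition stable under the module operations.
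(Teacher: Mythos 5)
Your proof is correct and follows essentially the same route as the paper, which simply verifies in one step that $\lambda f + g$ is continuous, $T$-equivariant, and has $q\big(\supp(\lambda f + g)\big)$ contained in the compact set $\overline{q(\supp(f))} \cup \overline{q(\supp(g))}$; your separation into zero, sums, and scalars, and your explicit flagging of the commutativity of $R$ in the equivariance check, are just a more granular presentation of the same routine verification.
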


\begin{proof}
Fix $f, g \in A_R(G;\Sigma)$ and $\lambda \in R$. Then $\lambda f + g$ is continuous and $T$-equivariant. Since $q\big(\!\supp(\lambda f + g)\big)$ is contained in the compact set $\overline{q(\supp(f))} \medcup \overline{q(\supp(g))}$, we deduce that $q\big(\!\supp(\lambda f + g)\big)$ has compact closure. Hence $\lambda f + g \in A_R(G;\Sigma)$.
\end{proof}

Since we are assuming that the twist $\Sigma$ is topologically trivial, it necessarily admits a continuous global section $P\colon G \to \Sigma$. We now show that \cref{def: A_R(G;Sigma)} can be rephrased in terms of any such $P$.

\begin{lemma} \label{lemma: A_R(G;Sigma) in terms of P}
Let $G$ be an ample Hausdorff groupoid, and let $(\Sigma, i, q)$ be a topologically trivial discrete twist by $T \le R^\times$ over $G$. Let $P\colon G \to \Sigma$ be any continuous global section. Then
\[
A_R(G;\Sigma) = \{ f \in C(\Sigma,R)\,:\, \text{$f$ is $T$-equivariant and $f \circ P \in C_c(G,R)$} \}.
\]
\end{lemma}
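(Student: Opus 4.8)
The plan is to observe that the two sets differ only in their compactness conditions: both consist of $T$-equivariant functions in $C(\Sigma,R)$, and since $P$ is continuous, $f \circ P$ is automatically continuous for any such $f$. Thus $f \circ P \in C_c(G,R)$ holds precisely when $\overline{\supp(f \circ P)}$ is compact. The whole lemma therefore reduces to showing that, for a $T$-equivariant $f \in C(\Sigma,R)$, the set $q(\supp(f))$ is compact if and only if $\supp(f \circ P)$ is. First I would record the elementary but crucial consequence of $T$-equivariance: since $f(z \cdot \varepsilon) = z \, f(\varepsilon)$ and every $z \in T \le R^\times$ is a unit, we have $f(z \cdot \varepsilon) \ne 0 \iff f(\varepsilon) \ne 0$, so $\supp(f)$ is invariant under the $T$-action.

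The heart of the argument is to prove the exact equality $\supp(f \circ P) = q(\supp(f))$. For the inclusion $\subseteq$, if $\gamma \in \supp(f \circ P)$ then $f(P(\gamma)) \ne 0$, so $P(\gamma) \in \supp(f)$, and since $q \circ P = \id_G$ we get $\gamma = q(P(\gamma)) \in q(\supp(f))$. For the reverse inclusion $\supseteq$, suppose $\gamma = q(\varepsilon)$ for some $\varepsilon \in \supp(f)$. Since $q(\varepsilon) = \gamma = q(P(\gamma))$, \cref{lemma: q and action} supplies a unique $z \in T$ with $\varepsilon = z \cdot P(\gamma)$, whence $f(\varepsilon) = z \, f(P(\gamma))$. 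As $f(\varepsilon) \ne 0$ and $z$ is a unit, this forces $f(P(\gamma)) \ne 0$, i.e.\ $\gamma \in \supp(f \circ P)$. This establishes the equality of the two supports, and hence $\overline{\supp(f \circ P)} = \overline{q(\supp(f))}$.

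With the support equality in hand, the two compactness conditions are literally the same, so the two descriptions of $A_R(G;\Sigma)$ coincide, completing the proof. I expect the only genuinely substantive step to be the $\supseteq$ inclusion in the support equality, which is where both the unit property of $T$ and \cref{lemma: q and action} are needed; the remaining points (continuity of $f \circ P$, $T$-invariance of $\supp(f)$, and the reduction of the compactness hypotheses) are routine. One small technical check worth flagging is that the equality $\supp(f \circ P) = q(\supp(f))$ is an honest set equality rather than merely an equality up to closure, so that passing to closures causes no loss; the argument above gives this directly.
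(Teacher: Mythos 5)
Your proof is correct and follows essentially the same route as the paper: both reduce the lemma to the set equality $q(\supp(f)) = \supp(f \circ P)$ for $T$-equivariant $f$, using $q \circ P = \id_G$ for one inclusion and, for the other, the fact that every element of $\Sigma$ has the form $z \cdot P(\gamma)$ (the paper cites the parametrisation from \cref{prop: section induces cocycle}, you invoke \cref{lemma: q and action} directly, which is the same underlying fact) together with $T$-equivariance and the invertibility of $z \in T \le R^\times$. Your explicit flagging that the support equality is an honest set equality, not merely one up to closure, matches the paper's computation exactly.
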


\begin{proof}
Fix $f \in C(\Sigma,R)$. Then $f \circ P$ is continuous. It suffices to show that $q(\supp(f)) = \supp(f \circ P)$, because then $\overline{q(\supp(f))}$ is compact if and only if $f \circ P \in C_c(G,R)$. By \cref{prop: section induces cocycle}\cref{item: P induces phi_P}, we know that $\Sigma = \{ z \cdot P(\alpha) : (\alpha,z) \in G \times T \}$. Therefore, we have
\begin{align*}
q(\supp(f)) &= \{ q(\varepsilon) \,:\, \varepsilon \in \Sigma, \, f(\varepsilon) \ne 0 \} \\
&= \{ q(z \cdot P(\alpha)) \,:\, (\alpha,z) \in G \times T, \, f(z \cdot P(\alpha)) \ne 0 \} \\
&= \{ \alpha \,:\, (\alpha,z) \in G \times T, \ z \, f(P(\alpha)) \ne 0 \} \\
&= \{ \alpha \in G \,:\, (f \circ P)(\alpha) \ne 0 \} \\
&= \supp(f \circ P),
\end{align*}
as required.
\end{proof}

\begin{remarks}[On the relationship with the classical setting] \label{remarks: why discretise} \leavevmode
\begin{enumerate}[label=(\arabic*), ref={\cref{remarks: why discretise}(\arabic*)}]
\item It is crucial here that we are dealing with discrete twists. Suppose that $\sigma$ is a $\T$-valued $2$-cocycle on an ample Hausdorff groupoid $G$ that is continuous with respect to the standard topology on $\T$, and consider the classical twist $G \times_\sigma \T$ over $G$. Suppose that $f \in C(G \times_\sigma \T)$ is a $\T$-equivariant function that is locally constant. Then, for any $\alpha \in G$, there is an open subset $V$ of $G$ containing $\alpha$ and an open subset $W$ of $\T$ containing $1$ such that $f$ is constant on $V \times W$. Since $W$ is open in the standard topology on $\T$, we have $W \ne \{1\}$. For each $z \in W {\setminus} \{1\}$, we have
\[
f(\alpha,1) = f(\alpha,z) = f(z \cdot (\alpha,1)) = z \, f(\alpha,1),
\]
and hence $f\restr{G \times \{1\}} \equiv 0$. But this implies that $f(\beta,w) = 0$ for all $(\beta,w) \in G \times_\sigma \T$, because $f$ is $\T$-equivariant. In other words, if singleton sets are not open in $\T$, then the only locally constant $\T$-equivariant function on $G \times_\sigma \T$ is the zero function.

\item \label{item: no C_c equivariant functions} It is also crucial that \Cref{def: A_R(G;Sigma)} differs from the C*-algebraic analogue defined in \cite[Definition~11.1.7 and Theorem~11.1.11]{Sims2020}, which is a C*-completion of the subalgebra of continuous \emph{compactly supported} $\T$-equivariant functions on a (classical) twist over $G$. To see why the compact-support condition would not be appropriate in the discrete setting, suppose that $G$ is an ample Hausdorff groupoid, and that $\sigma\colon \Gc \to \T_d$ is a continuous $2$-cocycle. Since $\T_d$ has the discrete topology, nonzero functions in $A_{\C_d}(G; \, G \times_\sigma \T_d)$ are not compactly supported. To see this, fix $f \in A_{\C_d}(G; \, G \times_\sigma \T_d)$ such that $f(\alpha,w) \ne 0$ for some $(\alpha,w) \in G\times_\sigma \T_d$. Then, for all $z \in \T_d$, we have
\[
f(\alpha,z) = f(\alpha, z \, \overline{w} \, w) = f\big((z \, \overline{w}) \cdot (\alpha,w)\big) = z \, \overline{w} \, f(\alpha,w) \ne 0.
\]
Thus $\{\alpha\} \times \T_d$ is a closed subset of $\supp(f)$ which is not compact (because $\T_d$ is not compact), and hence $f$ is not compactly supported.
\end{enumerate}
\end{remarks}

\begin{prop} \label{prop: Steinberg algebra of a twist}
Let $G$ be an ample Hausdorff groupoid, and let $(\Sigma, i, q)$ be a topologically trivial discrete twist by $T \le R^\times$ over $G$. Let $P\colon G \to \Sigma$ be any continuous global section. There is a multiplication (called \hl{convolution}) on the $R$-module $A_R(G;\Sigma)$, given by
\begin{equation} \label{eqn: multiplication *_Sigma}
(f*_\Sigma g)(\varepsilon) \coloneqq \sum_{\gamma \in G^{s(q(\varepsilon))}} f(\varepsilon P(\gamma)) \, g(P(\gamma)^{-1}),
\end{equation}
under which $A_R(G;\Sigma)$ is an $R$-algebra. We call $A_R(G;\Sigma)$ the \hl{twisted Steinberg algebra} associated to the pair $(G,\Sigma)$. If $R$ has a $T$-inverse involution $r \mapsto \overline{r}$, then there is also an involution on $A_R(G;\Sigma)$, given by
\[
f^*(\varepsilon) \coloneqq \overline{f(\varepsilon^{-1})},
\]
under which $A_R(G;\Sigma)$ is a $*$-algebra over $R$.
\end{prop}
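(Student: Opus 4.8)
The plan is to reduce everything to the already-established algebra $A_R(G,\sigma^{-1})$ of \cref{prop: twisted Steinberg algebra}, where $\sigma\colon \Gc \to T$ is the continuous $2$-cocycle induced by the fixed section $P$ via \cref{prop: section induces cocycle}\cref{item: P induces sigma}. First I would record that the map $\Phi\colon f \mapsto f \circ P$ is an $R$-module isomorphism from $A_R(G;\Sigma)$ onto $A_R(G,\sigma^{-1}) = C_c(G,R)$. That $\Phi$ is a well-defined $R$-linear injection is immediate from \cref{lemma: A_R(G;Sigma) in terms of P} together with the fact that a $T$-equivariant function is determined by its values on $P(G)$; surjectivity follows because every $\varepsilon \in \Sigma$ has the unique form $z \cdot P(\alpha)$ (\cref{prop: section induces cocycle}\cref{item: P induces phi_P}), so given $h \in C_c(G,R)$ the prescription $z \cdot P(\alpha) \mapsto z\,h(\alpha)$ defines a $T$-equivariant preimage, continuous because $\phi_P$ is a homeomorphism and $T,R$ are discrete. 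I would also isolate the principle, used repeatedly, that a $T$-equivariant $h\colon \Sigma \to R$ is continuous if and only if $h \circ P$ is continuous, again via the homeomorphism $\phi_P$.

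Next I would establish that $*_\Sigma$ is well-defined and that $\Phi$ intertwines it with $*_{\sigma^{-1}}$. For fixed $\varepsilon$ the defining sum is finite: a nonzero summand forces $\gamma^{-1} = q(P(\gamma)^{-1}) \in q(\supp g) = \supp(g \circ P)$, a set of compact closure meeting the discrete fibre $G_{s(q(\varepsilon))}$ in finitely many points since $G$ is \'etale. The $T$-equivariance of $f *_\Sigma g$ follows from $(w \cdot \varepsilon) P(\gamma) = w \cdot (\varepsilon P(\gamma))$ and equivariance of $f$. The crux is the computation
\[
(f *_\Sigma g)(P(x)) = \sum_{\gamma \in G^{s(x)}} \sigma(x,\gamma)\,\sigma(\gamma,\gamma^{-1})^{-1}\,(f \circ P)(x\gamma)\,(g \circ P)(\gamma^{-1}),
\]
obtained by substituting $P(x)P(\gamma) = \sigma(x,\gamma) \cdot P(x\gamma)$ and $P(\gamma)^{-1} = \sigma(\gamma,\gamma^{-1})^{-1} \cdot P(\gamma^{-1})$ from \cref{prop: section induces cocycle}\cref{item: P properties} and applying equivariance. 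Comparing with the formula for $*_{\sigma^{-1}}$ in \cref{prop: twisted Steinberg algebra}, it remains to verify $\sigma(x,\gamma)\,\sigma(x\gamma,\gamma^{-1}) = \sigma(\gamma,\gamma^{-1})$, which is exactly the $2$-cocycle identity at $(x,\gamma,\gamma^{-1})$ combined with normalisation $\sigma(x,s(x)) = 1$. Hence $(f *_\Sigma g) \circ P = (f \circ P) *_{\sigma^{-1}} (g \circ P) \in C_c(G,R)$, so $f *_\Sigma g$ is continuous (by the equivariance principle) and lies in $A_R(G;\Sigma)$, and $\Phi(f *_\Sigma g) = \Phi(f) *_{\sigma^{-1}} \Phi(g)$.

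With the intertwining in hand, associativity and $R$-bilinearity of $*_\Sigma$ transport directly from the corresponding properties of $*_{\sigma^{-1}}$ using injectivity of $\Phi$; for instance $\Phi\big((f *_\Sigma g) *_\Sigma h\big) = \big(\Phi f *_{\sigma^{-1}} \Phi g\big) *_{\sigma^{-1}} \Phi h$, and associativity of $*_{\sigma^{-1}}$ then yields $(f *_\Sigma g) *_\Sigma h = f *_\Sigma (g *_\Sigma h)$. For the involution I would first check that $f^* \in A_R(G;\Sigma)$: it is $T$-equivariant because $(w \cdot \varepsilon)^{-1} = w^{-1} \cdot \varepsilon^{-1}$ and $\overline{w^{-1}} = w$ on $T$, continuous because inversion on $\Sigma$ is a homeomorphism and $R$ is discrete, and $q(\supp f^*) = (q(\supp f))^{-1}$ has compact closure. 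Then, using $P(\beta)^{-1} = \sigma(\beta,\beta^{-1})^{-1} \cdot P(\beta^{-1})$ and $\overline{z} = z^{-1}$, one computes $f^*(P(\beta)) = \sigma(\beta,\beta^{-1})\,\overline{(f \circ P)(\beta^{-1})}$, which is precisely the $*_{\sigma^{-1}}$-involution of $f \circ P$; thus $\Phi$ intertwines the involutions as well, and the identities $(f^*)^* = f$, conjugate-linearity, and $(f *_\Sigma g)^* = g^* *_\Sigma f^*$ all transport from $A_R(G,\sigma^{-1})$.

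The main obstacle is the cocycle bookkeeping isolated above: verifying $\sigma(x,\gamma)\,\sigma(x\gamma,\gamma^{-1}) = \sigma(\gamma,\gamma^{-1})$ for the product, and the analogous simplification $f^*(P(\beta)) = \sigma(\beta,\beta^{-1})\,\overline{(f \circ P)(\beta^{-1})}$ for the involution. Once these two identities are in place, every remaining claim is a formal transport of structure along the module isomorphism $\Phi$, so no independent verification of associativity or of $(fg)^* = g^* f^*$ directly on $\Sigma$ is required. This computation also lays the groundwork for the later identification of $A_R(G;\Sigma)$ with $A_R(G,\sigma^{-1})$.
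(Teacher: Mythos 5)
Your proposal is correct, and its crux coincides with the paper's: both hinge on the computation that, after substituting $P(x)P(\gamma) = \sigma(x,\gamma)\cdot P(x\gamma)$ and $P(\gamma)^{-1} = \sigma(\gamma,\gamma^{-1})^{-1}\cdot P(\gamma^{-1})$ and invoking the $2$-cocycle identity at $(x,\gamma,\gamma^{-1})$ together with normalisation, one gets $(f *_\Sigma g)\circ P = (f\circ P) *_{\sigma^{-1}} (g\circ P)$. Where you differ is in organisation. You front-load the full $R$-module isomorphism $\Phi\colon f \mapsto f\circ P$ onto $A_R(G,\sigma^{-1})$ (including the surjectivity construction $z\cdot P(\alpha)\mapsto z\,h(\alpha)$ and the involution identity $f^*(P(\beta)) = \sigma(\beta,\beta^{-1})\,\overline{(f\circ P)(\beta^{-1})}$) and then transport associativity, bilinearity, and the $*$-algebra axioms along $\Phi$ by injectivity; the paper's proof of this proposition only establishes closure under $*_\Sigma$ and under $f\mapsto f^*$, declares the remaining axioms routine, and defers the isomorphism — essentially everything else you prove — to \cref{thm: same twisted Steinberg algebras}. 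Your transport argument is sound (closure is proved before $\Phi$ is applied to triple products, so every expression is defined), and it buys an explicit justification of the ``routine'' axioms at the cost of pre-proving much of the later theorem. Your finiteness argument also works with only the constraint $\gamma^{-1}\in \supp(g\circ P)$, since $G_{s(q(\varepsilon))}$ is closed and discrete and so meets a set of compact closure finitely; the paper uses both support constraints, but one suffices.

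One item the paper proves that you omit: independence of \cref{eqn: multiplication *_Sigma} from the choice of continuous global section. The statement defines $*_\Sigma$ via an \emph{arbitrary} section $P$ and then names the result the twisted Steinberg algebra associated to the \emph{pair} $(G,\Sigma)$, so one must check that two sections $P$ and $S$ yield the same product. This is the first thing the paper verifies: by \cref{lemma: q and action} there is a unique $z_\gamma \in T$ with $P(\gamma) = z_\gamma \cdot S(\gamma)$, and $T$-equivariance of $f$ and $g$ gives $f(\varepsilon P(\gamma))\, g(P(\gamma)^{-1}) = z_\gamma\, z_\gamma^{-1}\, f(\varepsilon S(\gamma))\, g(S(\gamma)^{-1})$, so the sums agree termwise. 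Since your intertwining result alone would only show that each section yields \emph{some} algebra structure, this check is genuinely needed for the proposition's content; but it is a two-line equivariance computation of exactly the kind you use elsewhere, so it is an omission rather than a gap in the method.
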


\begin{proof}
By \cref{lemma: A_R(G;Sigma) is an R-module}, $A_R(G;\Sigma)$ is an $R$-module. We first show that the multiplication formula given in \cref{eqn: multiplication *_Sigma} is well-defined. To see this, fix $f, g \in A_R(G;\Sigma)$, and suppose that $P, S\colon G \to \Sigma$ are continuous global sections. For each $\gamma \in G$, we have $q(P(\gamma)) = \gamma = q(S(\gamma))$, and hence by \cref{lemma: q and action}, there exists a unique $z_\gamma \in T$ such that $P(\gamma) = z_\gamma \cdot S(\gamma)$. Fix $\varepsilon \in \Sigma$ and $\gamma \in G^{s(q(\varepsilon))}$. Since $f$ and $g$ are $T$-equivariant, we have
\begin{align*}
f(\varepsilon P(\gamma)) \, g(P(\gamma)^{-1}) &= f\big(z_\gamma \cdot (\varepsilon S(\gamma))\big) \, g\big(z_\gamma^{-1} \cdot S(\gamma)^{-1}\big) \\
&= z_\gamma \, f(\varepsilon S(\gamma)) \, z_\gamma^{-1} \, g(S(\gamma)^{-1}) \\
&= f(\varepsilon S(\gamma)) \, g(S(\gamma)^{-1}),
\end{align*}
and so the sum defining $f *_\Sigma g$ is independent of the choice of continuous global section. To see that the sum in \cref{eqn: multiplication *_Sigma} is finite, observe that since $f$ and $g$ are $T$-equivariant, \cref{lemma: q and action} implies that $\varepsilon P(\gamma) \in \supp(f)$ if and only if $q(\varepsilon) \gamma \in q(\supp(f))$, and $P(\gamma)^{-1} \in \supp(g)$ if and only if $\gamma^{-1} \in q(\supp(g))$. Since $\overline{q(\supp(f))}$ and $\overline{q(\supp(g))}$ are compact and $G^{s(q(\varepsilon))}$ is discrete, it follows that the set
\[
\left\{ \gamma \in G^{s(q(\varepsilon))} : f(\varepsilon P(\gamma)) \, g(P(\gamma)^{-1}) \ne 0 \right\} \,\subseteq\, G^{s(q(\varepsilon))} \,\medcap\, q(\varepsilon)^{-1} q(\supp(f)) \,\medcap\, q(\supp(g))^{-1}
\]
is finite, and hence $f *_\Sigma g$ is well-defined.

To see that $A_R(G;\Sigma)$ is an $R$-algebra, we will just show that it is closed under the multiplication, as it is routine to check that the multiplication satisfies all of the other necessary properties. Recall that by \cref{prop: section induces cocycle}, $P$ induces a continuous $2$-cocycle $\sigma\colon \Gc \to T \le R^\times$ such that the map $\phi_P\colon G \times_\sigma T \to \Sigma$ given by $\phi_P(\alpha,z) \coloneqq z \cdot P(\alpha)$ is an isomorphism of twists. Fix $f, g \in A_R(G;\Sigma)$, and define $f_P \coloneqq f \circ P$ and $g_P \coloneqq g \circ P$. By \cref{lemma: A_R(G;Sigma) in terms of P}, $f_P$ and $g_P$ are elements of $C_c(G,R)$, which is equal (as an $R$-module) to $A_R(G,\sigma^{-1})$, by \cref{lemma: functions in twisted Steinberg algebras}\cref{item: functions are C_c and LC}. We will express the product $f *_\Sigma g$ in terms of $f_P *_{\sigma^{-1}} g_P$, which we know is an element of $A_R(G,\sigma^{-1})$, by \cref{prop: twisted Steinberg algebra}. Fix $(\alpha,z) \in G \times_\sigma T$. Using $T$-equivariance for the second and fourth equalities and \cref{prop: section induces cocycle}\cref{item: P properties} for the third equality below, we obtain
\begin{align*}
(f *_\Sigma g)(z \cdot P(\alpha)) \,&=\, \sum_{\beta \in G^{s(q(z \cdot P(\alpha)))}} f\big((z \cdot P(\alpha)) \, P(\beta)\big) \, g(P(\beta)^{-1}) \\
&=\, \sum_{\beta \in G^{s(\alpha)}} z \, f(P(\alpha)P(\beta)) \, g(P(\beta)^{-1}) \\
&=\, z \, \sum_{\beta \in G^{s(\alpha)}} f\big(\sigma(\alpha,\beta) \cdot P(\alpha\beta)\big) \, g\big(\sigma(\beta,\beta^{-1})^{-1} \cdot P(\beta^{-1})\big) \\
&=\, z \, \sum_{\beta \in G^{s(\alpha)}} \sigma(\alpha,\beta) \, \sigma(\beta,\beta^{-1})^{-1} \, f_P(\alpha\beta) \, g_P(\beta^{-1}). \numberthis \label{eqn: *_Sigma convolution LHS}
\end{align*}
We also have
\begin{equation} \label{eqn: * convolution RHS}
(f_P *_{\sigma^{-1}} g_P)(\alpha) \,=\, \sum_{\beta \in G^{s(\alpha)}} \sigma^{-1}(\alpha\beta,\beta^{-1}) \, f_P(\alpha\beta) \, g_P(\beta^{-1}).
\end{equation}
Since $\sigma$ is normalised and satisfies the $2$-cocycle identity, we have
\[
\sigma(\alpha,\beta) \, \sigma(\alpha\beta,\beta^{-1}) = \sigma(\alpha,\beta\beta^{-1}) \, \sigma(\beta,\beta^{-1}) = \sigma(\beta,\beta^{-1}),
\]
and hence
\begin{equation} \label{eqn: cocycle terms in *_Sigma}
\sigma(\alpha,\beta) \, \sigma(\beta,\beta^{-1})^{-1} = \sigma(\alpha\beta,\beta^{-1})^{-1} = \sigma^{-1}(\alpha\beta,\beta^{-1}),
\end{equation}
for each $\beta \in G^{s(\alpha)}$. Together, \cref{eqn: *_Sigma convolution LHS,eqn: * convolution RHS,eqn: cocycle terms in *_Sigma} imply that
\begin{equation} \label{eqn: relating *_Sigma to *}
(f *_\Sigma g)(\phi_P(\alpha,z)) = (f *_\Sigma g)(z \cdot P(\alpha)) = z \, (f_P *_{\sigma^{-1}} g_P)(\alpha).
\end{equation}
Define $\psi_P^{f,g}\colon G \times_\sigma T \to R$ by $\psi_P^{f,g}(\alpha,z) \coloneqq z \, (f_P *_{\sigma^{-1}} g_P)(\alpha)$. Since $f_P, g_P \in A_R(G,\sigma^{-1})$, we have $f_P *_{\sigma^{-1}} g_P \in A_R(G,\sigma^{-1}) \subseteq C(G,R)$. Thus $\psi_P^{f,g}$ is continuous. Since $\phi_P$ is a homeomorphism and $f *_\Sigma g = \psi_P^{f,g} \circ \phi_P^{-1}$, we deduce that $f *_\Sigma g \in C(\Sigma,R)$. Taking $z = 1$ in \cref{eqn: relating *_Sigma to *} shows that $(f *_\Sigma g) \circ P = f_P *_{\sigma^{-1}} g_P \in C_c(G,R)$, and \cref{lemma: A_R(G;Sigma) in terms of P} implies that this is equivalent to showing that $\overline{q(\supp(f *_\Sigma g))}$ is compact. Finally, to see that $f *_\Sigma g$ is $T$-equivariant, fix $z \in T$ and $\varepsilon \in \Sigma$. Then $\varepsilon = w \cdot P(\beta)$ for a unique pair $(\beta,w) \in G \times_\sigma T$. Thus, \cref{eqn: relating *_Sigma to *} implies that $(f *_\Sigma g)(\varepsilon) = w \, (f_P *_{\sigma^{-1}} g_P)(\beta)$, and hence
\[
(f *_\Sigma g)(z \cdot \varepsilon) = (f *_\Sigma g)\big((zw) \cdot P(\beta)\big) = z \, w \, (f_P *_{\sigma^{-1}} g_P)(\beta) = z \, (f *_\Sigma g)(\varepsilon).
\]
Therefore, $f *_\Sigma g \in A_R(G;\Sigma)$, and so $A_R(G;\Sigma)$ is an $R$-algebra.

Suppose now that $R$ has a $T$-inverse involution $r \mapsto \overline{r}$. We show that $f^* \in A_R(G;\Sigma)$. Since $f$ is continuous, $f^*$ is a composition of continuous maps, and so $f^* \in C(\Sigma,R)$. For all $z \in T$ and $\varepsilon \in \Sigma$, we have
\[
f^*(z \cdot \varepsilon) = \overline{f((z \cdot \varepsilon)^{-1})} = \overline{f\big((z^{-1}) \cdot (\varepsilon^{-1})\big)} = \overline{z^{-1} f(\varepsilon^{-1})} = z \, f^*(\varepsilon),
\]
and so $f^*$ is $T$-equivariant. Since $\supp(f^*) = (\supp(f))^{-1}$ and $q$ is a continuous homomorphism, we have $q(\supp(f^*)) \subseteq \big(\overline{q(\supp(f))}\big)^{-1}$, and hence $\overline{q(\supp(f^*))}$ is compact because it is a closed subset of a compact set. Thus $f^* \in A_R(G;\Sigma)$. Routine calculations show that the map $f \mapsto f^*$ satisfies all of the properties of an involution on $A_R(G;\Sigma)$, since $r \mapsto \overline{r}$ is an involution on $R$. Therefore, $A_R(G;\Sigma)$ is a $*$-algebra over $R$.
\end{proof}

We now show that isomorphic twists give rise to isomorphic twisted Steinberg algebras.

\begin{prop} \label{prop: Steinberg algebras of isomorphc twists}
Let $G$ be an ample Hausdorff groupoid. Suppose that $(\Sigma_1, i_1, q_1)$ and $(\Sigma_2, i_2, q_2)$ are topologically trivial discrete twists by $T \le R^\times$ over $G$. If $\psi\colon \Sigma_1 \to \Sigma_2$ is an isomorphism of twists, then the map $\Phi\colon f \mapsto f \circ \psi$ is an isomorphism from $A_R(G;\Sigma_2)$ to $A_R(G;\Sigma_1)$. If $R$ has a $T$-inverse involution, then $\Phi$ is a $*$-isomorphism.
\end{prop}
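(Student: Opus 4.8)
The plan is to verify directly that $\Phi\colon f \mapsto f \circ \psi$ is a well-defined $R$-algebra isomorphism with inverse $g \mapsto g \circ \psi^{-1}$, and that it intertwines the involutions. First I would check that $\Phi$ maps $A_R(G;\Sigma_2)$ into $A_R(G;\Sigma_1)$. Continuity of $f \circ \psi$ is immediate. For $T$-equivariance, \cref{lemma: twist isomorphisms respect T-action} gives $\psi(z \cdot \varepsilon) = z \cdot \psi(\varepsilon)$, so $(f \circ \psi)(z \cdot \varepsilon) = f(z \cdot \psi(\varepsilon)) = z\,(f \circ \psi)(\varepsilon)$. For the support condition, note that $\supp(f \circ \psi) = \psi^{-1}(\supp(f))$, and since $q_1 = q_2 \circ \psi$ and $\psi$ is a bijection, $q_1(\supp(f \circ \psi)) = q_2(\supp(f))$; hence $\overline{q_1(\supp(f \circ \psi))}$ is compact. $R$-linearity of $\Phi$ is clear. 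Since $\psi^{-1}\colon \Sigma_2 \to \Sigma_1$ is again an isomorphism of twists (it is a groupoid isomorphism, and the relevant diagram commutes because $i_1 = \psi^{-1} \circ i_2$ and $q_1 \circ \psi^{-1} = q_2$), the same argument shows that $g \mapsto g \circ \psi^{-1}$ maps $A_R(G;\Sigma_1)$ into $A_R(G;\Sigma_2)$, and it is a two-sided inverse for $\Phi$; thus $\Phi$ is an $R$-module isomorphism.

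The key step is multiplicativity, and here I would exploit the freedom in the convolution formula \cref{eqn: multiplication *_Sigma}. Fix any continuous global section $P_1\colon G \to \Sigma_1$ (which exists since $\Sigma_1$ is topologically trivial), and set $P_2 \coloneqq \psi \circ P_1$. Then $P_2$ is a continuous global section of $\Sigma_2$: it is continuous, $q_2 \circ P_2 = q_1 \circ P_1 = \id_G$, and $P_2(\Go) = \psi(P_1(\Go)) \subseteq \psi(\Sigma_1^{(0)}) = \Sigma_2^{(0)}$ since $\psi$ carries units to units. Computing $(f *_{\Sigma_2} g)(\psi(\varepsilon))$ with the section $P_2$, I would use $q_2(\psi(\varepsilon)) = q_1(\varepsilon)$ (so that the index set is unchanged) together with the fact that $\psi$ is a groupoid homomorphism to rewrite $\psi(\varepsilon) P_2(\gamma) = \psi(\varepsilon P_1(\gamma))$ and $P_2(\gamma)^{-1} = \psi(P_1(\gamma)^{-1})$. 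Each summand then becomes $(f \circ \psi)(\varepsilon P_1(\gamma))\,(g \circ \psi)(P_1(\gamma)^{-1})$, and summing gives exactly $(\Phi(f) *_{\Sigma_1} \Phi(g))(\varepsilon)$ computed with $P_1$. Hence $\Phi(f *_{\Sigma_2} g) = \Phi(f) *_{\Sigma_1} \Phi(g)$.

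For the $*$-claim, suppose $R$ has a $T$-inverse involution. Since $\psi$ respects inversion, I compute $\Phi(f^*)(\varepsilon) = f^*(\psi(\varepsilon)) = \overline{f(\psi(\varepsilon)^{-1})} = \overline{f(\psi(\varepsilon^{-1}))} = \overline{\Phi(f)(\varepsilon^{-1})} = \Phi(f)^*(\varepsilon)$, so $\Phi$ is a $*$-isomorphism. The main point requiring care is the multiplicativity step: the convolution \cref{eqn: multiplication *_Sigma} is defined through a choice of global section, so one must pick sections on the two twists that are compatible under $\psi$ (namely $P_1$ and $\psi \circ P_1$) in order to match the two sums term by term; the independence of the convolution from the choice of section, established in \cref{prop: Steinberg algebra of a twist}, guarantees that this introduces no ambiguity.
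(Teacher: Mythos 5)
Your proof is correct and takes essentially the same route as the paper's: both verify well-definedness using \cref{lemma: twist isomorphisms respect T-action} together with the compatible sections $P_1$ and $P_2 \coloneqq \psi \circ P_1$, and both obtain bijectivity via the inverse $g \mapsto g \circ \psi^{-1}$. The only difference is that you spell out the multiplicativity and involution computations (matching the two convolution sums term by term, justified by the section-independence established in \cref{prop: Steinberg algebra of a twist}) which the paper dismisses as routine, and your details are accurate.
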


\begin{proof}
We first show that $f \circ \psi \in A_R(G;\Sigma_1)$ for each $f \in A_R(G;\Sigma_2)$. Let $P_1\colon G \to \Sigma_1$ be a continuous global section, and define $P_2 \coloneqq \psi \circ P_1\colon G \to \Sigma_2$. Then $P_2$ is continuous, $P_2(\Go) \subseteq \psi\big(\Sigma_1^{(0)}\big) = \Sigma_2^{(0)}$, and since $q_2 \circ \psi = q_1$,
\[
q_2 \circ P_2 = q_2 \circ (\psi \circ P_1) = (q_2 \circ \psi) \circ P_1 = q_1 \circ P_1 = \id_G.
\]
Hence $P_2$ is a continuous global section. Fix $f \in A_R(G;\Sigma_2) \subseteq C(\Sigma_2,R)$. Since $\psi$ is continuous, $f \circ \psi \in C(\Sigma_1,R)$. By \cref{lemma: twist isomorphisms respect T-action}, $\psi$ respects the action of $T$, and hence the $T$-equivariance of $f$ implies that $f \circ \psi$ is $T$-equivariant. Moreover, \cref{lemma: A_R(G;Sigma) in terms of P} implies that $f \circ \psi \circ P_1 = f \circ P_2 \in C_c(G,R)$, and thus $f \circ \psi \in A_R(G;\Sigma_1)$.

Therefore, there is a map $\Phi\colon A_R(G;\Sigma_2) \to A_R(G;\Sigma_1)$ given by $\Phi(f) \coloneqq f \circ \psi$. Routine calculations show that $\Phi$ is a homomorphism, and that if $R$ has a $T$-inverse involution, then $\Phi$ is a $*$-homomorphism. Furthermore, $\Phi$ is bijective with inverse given by $\Phi^{-1}(g) = g \circ \psi^{-1}$, and hence $\Phi$ is an isomorphism (or a $*$-isomorphism).
\end{proof}

By \cref{prop: section induces cocycle}, we know that for every topologically trivial discrete twist $\Sigma$ over an ample Hausdorff groupoid $G$, there is a continuous $2$-cocycle $\sigma\colon \Gc \to T \le R^\times$ such that $\Sigma \cong G \times_\sigma T$. Hence $A_R(G;\Sigma)$ is isomorphic to $A_R(G; \, G \times_\sigma T)$, by \cref{prop: Steinberg algebras of isomorphc twists}. We now prove that $A_R(G;\Sigma)$ is also isomorphic to $A_R(G,\sigma^{-1})$.

\begin{thm} \label{thm: same twisted Steinberg algebras}
Let $G$ be an ample Hausdorff groupoid, and let $\Sigma$ be a topologically trivial discrete twist by $T \le R^\times$ over $G$. Let $P\colon G \to \Sigma$ be a continuous global section, and let $\sigma\colon \Gc \to T$ be the continuous $2$-cocycle induced by $P$, as in \cref{prop: section induces cocycle}\cref{item: P induces sigma}. The map $\Psi\colon f \mapsto f \circ P$ is an isomorphism from $A_R(G;\Sigma)$ to $A_R(G,\sigma^{-1})$. If $R$ has a $T$-inverse involution, then $\Psi$ is a $*$-isomorphism.
\end{thm}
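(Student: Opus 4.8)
The plan is to verify, in order, that $\Psi$ is a well-defined $R$-linear map into $A_R(G,\sigma^{-1})$, that it is multiplicative, that it is bijective, and finally that it intertwines the two involutions. For well-definedness, note that for $f \in A_R(G;\Sigma)$, \cref{lemma: A_R(G;Sigma) in terms of P} gives $\Psi(f) = f \circ P \in C_c(G,R)$, and $C_c(G,R) = A_R(G,\sigma^{-1})$ as $R$-modules by \cref{lemma: functions in twisted Steinberg algebras}\cref{item: functions are C_c and LC}; thus $\Psi$ maps into $A_R(G,\sigma^{-1})$, and $R$-linearity is immediate since both module structures are pointwise. Multiplicativity is essentially already contained in the proof of \cref{prop: Steinberg algebra of a twist}: writing $f_P \coloneqq f \circ P = \Psi(f)$ and $g_P \coloneqq g \circ P = \Psi(g)$ and setting $z = 1$ in \cref{eqn: relating *_Sigma to *} (so that $\phi_P(\alpha,1) = P(\alpha)$), we obtain for all $\alpha \in G$
\[
\Psi(f *_\Sigma g)(\alpha) = (f *_\Sigma g)(P(\alpha)) = (f_P *_{\sigma^{-1}} g_P)(\alpha) = \big(\Psi(f) *_{\sigma^{-1}} \Psi(g)\big)(\alpha),
\]
so $\Psi$ is an $R$-algebra homomorphism.

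For bijectivity I would exhibit a two-sided inverse. Injectivity follows from $T$-equivariance: by \cref{prop: section induces cocycle}\cref{item: P induces phi_P} every $\varepsilon \in \Sigma$ has the form $z \cdot P(\alpha)$, so if $f \circ P = g \circ P$ then $f(\varepsilon) = z\,f(P(\alpha)) = z\,g(P(\alpha)) = g(\varepsilon)$, whence $f = g$. For surjectivity, given $h \in A_R(G,\sigma^{-1}) = C_c(G,R)$, define $\tilde h\colon \Sigma \to R$ by $\tilde h(z \cdot P(\alpha)) \coloneqq z\,h(\alpha)$; this is well-defined because $\phi_P$ is a bijection, it is $T$-equivariant by construction, and it is continuous since $\tilde h = \big[(\alpha,z) \mapsto z\,h(\alpha)\big] \circ \phi_P^{-1}$ is a composition of continuous maps (compare the continuity argument for $\psi_P^{f,g}$ in the proof of \cref{prop: Steinberg algebra of a twist}). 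As $\tilde h \circ P = h \in C_c(G,R)$, \cref{lemma: A_R(G;Sigma) in terms of P} gives $\tilde h \in A_R(G;\Sigma)$, and $\Psi(\tilde h) = h$; hence the assignment $h \mapsto \tilde h$ is a two-sided inverse of $\Psi$.

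Finally, assuming $R$ has a $T$-inverse involution, I would check $\Psi(f^*) = \Psi(f)^*$ pointwise. For $\gamma \in G$, using the definition of $*$ on $A_R(G;\Sigma)$, then \cref{prop: section induces cocycle}\cref{item: P properties} (which gives $P(\gamma)^{-1} = \sigma(\gamma,\gamma^{-1})^{-1} \cdot P(\gamma^{-1})$), then $T$-equivariance of $f$, and finally $\overline{z} = z^{-1}$ for $z \in T$, we find
\[
\Psi(f^*)(\gamma) = \overline{f(P(\gamma)^{-1})} = \overline{\sigma(\gamma,\gamma^{-1})^{-1}\,f(P(\gamma^{-1}))} = \sigma(\gamma,\gamma^{-1})\,\overline{\Psi(f)(\gamma^{-1})}.
\]
Since the involution on $A_R(G,\sigma^{-1})$ sends $h$ to $h^*(\gamma) = \sigma^{-1}(\gamma,\gamma^{-1})^{-1}\,\overline{h(\gamma^{-1})} = \sigma(\gamma,\gamma^{-1})\,\overline{h(\gamma^{-1})}$, the right-hand side is exactly $\Psi(f)^*(\gamma)$, so $\Psi$ is a $*$-isomorphism. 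The substantive work lies in these last two steps: confirming that the candidate inverse $\tilde h$ genuinely lands in $A_R(G;\Sigma)$ (continuity together with the support condition via \cref{lemma: A_R(G;Sigma) in terms of P}), and correctly matching the involutions while tracking the passage between $\sigma$ and $\sigma^{-1}$ and invoking $\overline z = z^{-1}$. Multiplicativity, by contrast, is inherited almost for free from \cref{eqn: relating *_Sigma to *}.
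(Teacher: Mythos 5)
Your proposal is correct and follows essentially the same route as the paper's proof: well-definedness via \cref{lemma: functions in twisted Steinberg algebras} and \cref{lemma: A_R(G;Sigma) in terms of P}, multiplicativity by setting $z=1$ in \cref{eqn: relating *_Sigma to *}, injectivity from $T$-equivariance together with the parametrisation $\Sigma = \{z \cdot P(\alpha)\}$, surjectivity via the explicit preimage $z \cdot P(\alpha) \mapsto z\,h(\alpha)$ made continuous through $\phi_P^{-1}$, and the involution check via \cref{prop: section induces cocycle}\cref{item: P properties}. Your involution computation just moves the conjugation bar through using $\overline{z} = z^{-1}$ where the paper keeps everything under a single bar, which is an immaterial presentational difference.
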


\begin{remark}
In the C*-setting, some authors (for example, \cite{BaH2014}) define the twisted groupoid C*-algebra $C^*(G;\Sigma)$ to be a C*-completion of the set of \emph{$\T$-contravariant} functions in $C_c(\Sigma)$, rather than $\T$-equivariant functions; that is,
\[
\{ f \in C_c(\Sigma) : f(z \cdot \varepsilon) = \overline{z} \, f(\varepsilon) \text{ for all } z \in \T, \, \varepsilon \in \Sigma \},
\]
rather than
\[
\{ f \in C_c(\Sigma) : f(z \cdot \varepsilon) = z \, f(\varepsilon) \text{ for all } z \in \T, \, \varepsilon \in \Sigma \}.
\]
As a consequence of this definition, the C*-analogue of \cref{thm: same twisted Steinberg algebras} gives an isomorphism between $C^*(G;\Sigma)$ and $C^*(G,\sigma)$, rather than $C^*(G;\Sigma)$ and $C^*(G,\sigma^{-1})$. Similarly, an alternate definition of $A_R(G;\Sigma)$ consisting of $T$-contravariant functions would result in $A_R(G;\Sigma)$ being isomorphic to $A_R(G,\sigma)$.
\end{remark}

\begin{proof}[Proof of \cref{thm: same twisted Steinberg algebras}]
By \cref{lemma: functions in twisted Steinberg algebras}\cref{item: functions are C_c and LC}, $A_R(G,\sigma^{-1})$ and $C_c(G,R)$ agree as $R$-modules, and hence \cref{lemma: A_R(G;Sigma) in terms of P} implies that
\begin{equation} \label{eqn: f circ P expression of A_R(G;Sigma)}
A_R(G;\Sigma) = \{ f \in C(\Sigma,R)\,:\, f \text{ is $T$-equivariant and } f \circ P \in A_R(G,\sigma^{-1}) \}.
\end{equation}
Therefore, there is a map $\Psi\colon A_R(G;\Sigma) \to A_R(G,\sigma^{-1})$ given by $\Psi(f) \coloneqq f \circ P$.

To see that $\Psi$ is injective, suppose that $\Psi(f) = \Psi(g)$ for some $f, g \in A_R(G;\Sigma)$. Fix $(\alpha,z) \in G \times_\sigma T$. Since $f$ and $g$ are $T$-equivariant, we have
\begin{equation} \label{eqn: injectivity of psi}
f(z \cdot P(\alpha)) = z \, f(P(\alpha)) = z \, \Psi(f)(\alpha) = z \, \Psi(g)(\alpha) = z \, g(P(\alpha)) = g(z \cdot P(\alpha)).
\end{equation}
By \cref{prop: section induces cocycle}\cref{item: P induces phi_P}, we have $\Sigma = \{ z \cdot P(\alpha) : (\alpha,z) \in G \times_\sigma T \}$, and so \cref{eqn: injectivity of psi} implies that $f = g$, and hence $\Psi$ is injective.

To see that $\Psi$ is surjective, fix $h \in A_R(G,\sigma^{-1})$, and recall from \cref{prop: section induces cocycle}\cref{item: P induces phi_P} that the map $\phi_P\colon G \times_\sigma T \to \Sigma$ given by $\phi_P(\alpha,z) \coloneqq z \cdot P(\alpha)$ is an isomorphism of twists. Define $f\colon \Sigma \to R$ by $f(z \cdot P(\alpha)) \coloneqq z \, h(\alpha)$, and $\widetilde{f}\colon G \times_\sigma T \to R$ by $\widetilde{f}(\alpha,z) \coloneqq z \, h(\alpha)$. Since $h \in C(G,R)$, we have $\widetilde{f} \in C(G \times_\sigma T, \, R)$, and hence $f = \widetilde{f} \circ \phi_P^{-1} \in C(\Sigma,R)$ because $\phi_P^{-1}$ is continuous. For all $\alpha \in G$ and $z, w \in T$, we have
\[
f\big(z \cdot (w \cdot P(\alpha))\big) = f\big((zw) \cdot P(\alpha)\big) = z \, w \, h(\alpha) = z \, f(w \cdot P(\alpha)),
\]
and so $f$ is $T$-equivariant. We also have $f \circ P = h \in A_R(G,\sigma^{-1})$, and thus \cref{eqn: f circ P expression of A_R(G;Sigma)} implies that $f \in A_R(G;\Sigma)$. Since $\Psi(f) = f \circ P = h$, $\Psi$ is surjective.

It is clear that $\Psi$ is $R$-linear. We claim that $\Psi$ is an $R$-algebra isomorphism. Fix $f, g \in A_R(G;\Sigma)$. In the notation introduced in the proof of \cref{prop: Steinberg algebra of a twist}, we have $\Psi(f) = f_P$ and $\Psi(g) = g_P$, and hence \cref{eqn: relating *_Sigma to *} implies that for all $\alpha \in G$, we have
\[
\Psi(f *_\Sigma g)(\alpha) = (f *_\Sigma g)(P(\alpha)) = \big(\Psi(f) *_{\sigma^{-1}} \Psi(g)\big)(\alpha).
\]
So $\Psi(f *_\Sigma g) = \Psi(f) *_{\sigma^{-1}} \Psi(g)$, and thus $\Psi$ is an isomorphism.

Suppose now that $R$ has a $T$-inverse involution $r \mapsto \overline{r}$. To see that $\Psi$ is a $*$-isomorphism, we must show that $\Psi(f^*) = \Psi(f)^*$. Fix $\alpha \in G$. By \cref{prop: section induces cocycle}\cref{item: P properties}, we have
\[
P(\alpha)^{-1} = \sigma^{-1}(\alpha,\alpha^{-1}) \cdot P(\alpha^{-1}),
\]
and hence
\begin{equation} \label{eqn: psi(f*)}
\Psi(f^*)(\alpha) = f^*(P(\alpha)) = \overline{f\big(P(\alpha)^{-1}\big)} = \overline{f\big(\sigma^{-1}(\alpha,\alpha^{-1}) \cdot P(\alpha^{-1})\big)}.
\end{equation}
We also have
\begin{align*}
\Psi(f)^*(\alpha) &= \big(\sigma^{-1}(\alpha,\alpha^{-1})\big)^{-1} \, \overline{\Psi(f)(\alpha^{-1})} \\
&= \overline{\sigma^{-1}(\alpha,\alpha^{-1}) \, f(P(\alpha^{-1}))} \\
&= \overline{f\big(\sigma^{-1}(\alpha,\alpha^{-1}) \cdot P(\alpha^{-1})\big)}. \numberthis \label{eqn: psi(f)*}
\end{align*}
Together, \cref{eqn: psi(f*),eqn: psi(f)*} imply that $\Psi(f^*) = \Psi(f)^*$.
\end{proof}

\begin{cor}
Let $G$ be an ample Hausdorff groupoid, and let $\sigma\colon \Gc \to T \le R^\times$ be a continuous $2$-cocycle. There is an isomorphism $\Psi\colon A_R(G; \, G \times_\sigma T) \to A_R(G,\sigma^{-1})$ such that $\Psi(f)(\gamma) = f(\gamma,1)$ for all $f \in A_R(G; \, G \times_\sigma T)$ and $\gamma \in G$. If $R$ has a $T$-inverse involution, then $\Psi$ is a $*$-isomorphism.
\end{cor}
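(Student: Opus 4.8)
The plan is to obtain this as an immediate specialisation of \cref{thm: same twisted Steinberg algebras} to the concrete twist $\Sigma = G \times_\sigma T$ constructed in \cref{eg: twisted groupoid from cocycle}, using the canonical global section. The only input needed beyond the theorem is the identification of the $2$-cocycle induced by that section, and this has already been pinned down earlier.

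First I would invoke \cref{lemma: trivial section induces sigma}, which tells us that the discrete twist $(G \times_\sigma T, i, q)$ is topologically trivial and that the map $S\colon \gamma \mapsto (\gamma,1)$ is a continuous global section from $G$ to $G \times_\sigma T$. Crucially, the same lemma records that $S$ induces precisely $\sigma$ (rather than merely some cohomologous cocycle) in the sense of \cref{prop: section induces cocycle}\cref{item: P induces sigma}. This is the one point that genuinely needs to be cited, since the conclusion of \cref{thm: same twisted Steinberg algebras} is phrased in terms of the cocycle induced by the chosen section, and here we want that cocycle to be exactly $\sigma$.

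With this in hand, I would apply \cref{thm: same twisted Steinberg algebras} directly, taking the topologically trivial discrete twist to be $G \times_\sigma T$, the continuous global section to be $P \coloneqq S$, and the induced $2$-cocycle to be $\sigma$. The theorem then yields that the map $\Psi\colon f \mapsto f \circ S$ is an $R$-algebra isomorphism from $A_R(G; \, G \times_\sigma T)$ onto $A_R(G,\sigma^{-1})$, and that $\Psi$ is a $*$-isomorphism whenever $R$ carries a $T$-inverse involution. To extract the advertised formula, I would simply unwind the definition of $S$: for every $f \in A_R(G; \, G \times_\sigma T)$ and $\gamma \in G$,
\[
\Psi(f)(\gamma) = (f \circ S)(\gamma) = f(S(\gamma)) = f(\gamma,1),
\]
as claimed.

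There is no real obstacle here; the corollary is essentially a restatement of \cref{thm: same twisted Steinberg algebras} for the distinguished model twist, and all the substantive work (well-definedness, bijectivity, multiplicativity, and compatibility with the involution) has been discharged in the proof of that theorem. The only thing worth flagging is the bookkeeping in \cref{lemma: trivial section induces sigma}: one must confirm that the section $S$ induces $\sigma$ itself, so that the target algebra is $A_R(G,\sigma^{-1})$ and not $A_R(G,\omega^{-1})$ for some other representative $\omega$ of the cohomology class.
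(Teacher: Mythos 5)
Your proposal is correct and matches the paper's proof exactly: the paper likewise cites \cref{lemma: trivial section induces sigma} to obtain the section $S\colon \gamma \mapsto (\gamma,1)$ inducing $\sigma$, and then applies \cref{thm: same twisted Steinberg algebras}. Your additional remark about verifying that $S$ induces $\sigma$ itself (not merely a cohomologous cocycle) is exactly the right point to flag, and it is precisely what that lemma establishes.
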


\begin{proof}
By \cref{lemma: trivial section induces sigma}, the map $S\colon \gamma \mapsto (\gamma,1)$ is a continuous global section from $G$ to $G \times_\sigma T$ that induces $\sigma$, and so the result follows from \cref{thm: same twisted Steinberg algebras}.
\end{proof}

\begin{remark}
If $G$ is an ample Hausdorff groupoid, then $G \times_\sigma \T_d$ is also an ample Hausdorff groupoid for any continuous $2$-cocycle $\sigma\colon \Gc \to \T_d$, and hence there is an associated (untwisted) complex Steinberg algebra $A(G \times_\sigma \T_d)$. As a vector space, $A(G \times_\sigma \T_d)$ is equal to $C_c(G \times_\sigma \T_d, \, \C_d)$ and is dense in $C_r^*(G \times_\sigma \T_d)$, by \cite[Proposition~4.2]{CFST2014} and \cite[Proposition~5.7]{Steinberg2010}. Moreover, by \cref{thm: same twisted Steinberg algebras}, we have $A(G; \, G \times_\sigma \T_d) \cong A(G,\sigma^{-1})$, and we know from \cref{prop: twisted Steinberg algebra} that $A(G,\sigma^{-1})$ is dense in $C_r^*(G,\sigma^{-1})$. We saw in \cref{item: no C_c equivariant functions} that the only compactly supported function in $A(G; \, G \times_\sigma \T_d) \subseteq C(G \times_\sigma \T_d, \, \C_d)$ is the zero function, and hence
\[
A(G; \, G \times_\sigma \T_d) \,\medcap\, A(G \times_\sigma \T_d) \,=\, \{0\}.
\]
However, this does not preclude $C_r^*(G,\sigma^{-1})$ from embedding into $C_r^*(G \times_\sigma \T_d)$. It would be interesting to know how these two C*-algebras are related.
\end{remark}

\section{Examples of twisted Steinberg algebras} \label{section: examples}

In this section we discuss two important classes of examples of twisted Steinberg algebras: twisted group algebras and twisted Kumjian--Pask algebras.

\subsection{Twisted discrete group algebras}

Suppose that $R$ is a discrete commutative unital ring and that $G$ is a topological group (that is, $G$ is a group endowed with a topology with respect to which multiplication and inversion are continuous.) Then $G$ is an ample groupoid if and only if $G$ has the discrete topology, in which case, any $R^\times$-valued $2$-cocycle on $G$ is locally constant. One defines a twist over a discrete group $G$ via a split extension by an abelian group $A$, as in \cite[Chapter~IV.3]{Brown1982}. When $A = R^\times$, the twist gives rise to an $R^\times$-valued $2$-cocycle on $G$, with which one can define a twisted group $R$-algebra. The twisted convolution and involution defined in \cref{prop: twisted Steinberg algebra} generalise those of classical twisted group algebras over $R^\times$, and hence our twisted Steinberg algebras generalise these twisted (discrete) group algebras. Interesting open questions about this class of algebras still exist, even for finite groups. (See, for example, \cite{MS2018}.) Moreover, twisted group C*-algebras (as studied in \cite{PR1992}) have featured prominently in the study of C*-algebras associated with groups and group actions; in particular, they have proved essential in establishing superrigidity results for certain nilpotent groups (see \cite{ER2018}).

\subsection{Twisted Kumjian--Pask algebras}

For each finitely aligned higher-rank graph (or $k$-graph) $\Lambda$, there is both a C*-algebra $C^*(\Lambda)$ called the \hl{Cuntz--Krieger algebra} (see \cite{RSY2004}) and a dense subalgebra $\KP(\Lambda)$ called the \hl{Kumjian--Pask algebra} (see \cite{APCaHR2013, CP2017}) encoding the structure of the graph. Letting $G_\Lambda$ denote the boundary-path groupoid defined in \cite{KP2000, FMY2005, Yeend2007}, we have
\[
C^*(\Lambda) \cong C^*(G_\Lambda) \quad \text{and} \quad \KP(\Lambda) \cong A(G_\Lambda).
\]
Twisted higher-rank graph C*-algebras were introduced and studied in a series of papers by Kumjian, Pask, and Sims \cite{KPS2012, KPS2013, KPS2015, KPS2016}, and they provide a class of (somewhat) tractable examples that can be used to demonstrate more general C*-algebraic phenomena. (See also \cite{AB2018, Gillaspy2015, SWW2014}.) We introduce twisted Kumjian--Pask algebras for row-finite higher-rank graphs with no sources using a twisted Steinberg algebra approach.

Let $\Lambda$ be a row-finite higher-rank graph with no sources, and let $c$ be a continuous $\T$-valued $2$-cocycle on $\Lambda$, as defined in \cite[Definition~3.5]{KPS2015}. Then $C^*(\Lambda,c)$ is the C*-algebra generated by a universal Cuntz--Krieger $(\Lambda,c)$-family, as defined in \cite[Definition~5.2]{KPS2015}. In \cite[Theorem~6.3(iii)]{KPS2015}, the authors describe how $\Lambda$ and $c$ give rise to a $2$-cocycle $\sigma_c\colon G_\Lambda^{(2)} \to \T$ such that
\[
C^*(\Lambda,c) \cong C^*(G_\Lambda,\sigma_c).
\]
By the last two sentences of the proof of \cite[Lemma~6.3]{KPS2015}, the $2$-cocycle $\sigma_c$ is normalised and locally constant. We define
\[
\KP(\Lambda,c) \coloneqq A(G_\Lambda,\sigma_c),
\]
and call this the \hl{(complex) twisted Kumjian--Pask algebra} associated to the pair $(\Lambda,c)$. By \cref{prop: twisted Steinberg algebra}, $\KP(\Lambda,c)$ is dense in $C^*(\Lambda,c)$.

In \cite[Definition~5.2]{KPS2015}, Kumjian, Pask, and Sims construct $C^*(\Lambda,c)$ using a generators-and-relations model involving the same generating partial isometries $\{ t_\lambda : \lambda \in \Lambda \}$ as $C^*(\Lambda)$, but with the relation $t_\mu t_\nu = t_{\mu\nu}$ replaced by $t_\mu t_\nu = c(\mu,\nu) \, t_{\mu\nu}$. We expect that there is a similar construction of $\KP(\Lambda,c)$ using these generators and relations, but we do not pursue this here.

\section{A Cuntz--Krieger uniqueness theorem and simplicity of twisted Steinberg algebras of effective groupoids} \label{section: uniqueness and simplicity}

In this section we extend the Cuntz--Krieger uniqueness theorem and a part of the simplicity characterisation for Steinberg algebras from \cite{BCFS2014} to the twisted Steinberg algebra setting. Throughout this section, we will assume that $G$ is an effective ample Hausdorff groupoid, and that $R = \F_d$ is a field endowed with the discrete topology.

\begin{thm}[Cuntz--Krieger uniqueness theorem] \label{thm: CK uniqueness}
Let $\F_d$ be a discrete field, let $G$ be an effective ample Hausdorff groupoid, and let $\sigma\colon \Gc \to \F_d^\times$ be a continuous $2$-cocycle. Suppose that $Q$ is a ring and that $\pi\colon A_{\F_d}(G,\sigma) \to Q$ is a ring homomorphism. Then $\pi$ is injective if and only if $\pi(1_V) \ne 0$ for every nonempty compact open subset $V$ of $\Go$.
\end{thm}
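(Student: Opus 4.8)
The forward implication is immediate: if $\pi$ is injective and $V \subseteq \Go$ is nonempty compact open, then $1_V$ is a nonzero element of $A_{\F_d}(G,\sigma)$, so $\pi(1_V) \ne 0$. The substance is the reverse implication, which I would prove in contrapositive form: assuming $\pi(a) = 0$ for some $0 \ne a$, I would produce a nonempty compact open $W \subseteq \Go$ and elements $b, c \in A_{\F_d}(G,\sigma)$ with $bac = 1_W$; then $\pi(1_W) = \pi(b)\pi(a)\pi(c) = 0$, contradicting the hypothesis. Since $\pi$ is only a ring homomorphism (no involution on $Q$ is assumed), I would work throughout with the generators $1_{B^{-1}}$ in place of the involution $1_B^*$, and absorb all scalars into $b,c$ inside the algebra rather than pulling them through $\pi$.

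The first step is an algebraic reduction onto the unit space. Writing $a = \sum_i \lambda_i 1_{B_i}$ with disjoint compact open bisections and $\lambda_i \in \F_d^\times$ (via \cref{lemma: functions in twisted Steinberg algebras}\cref{item: bisection sum}), I would fix $\gamma_0 \in B_1$ and a compact open bisection $B \ni \gamma_0$ with $B \subseteq B_1$ chosen small enough that $\gamma \mapsto \sigma(\gamma,\gamma^{-1})$ is constant, say equal to $\kappa$, on $B$. A direct computation with the twisted convolution shows $B_1 B^{-1} = r(B)$ while $B_i B^{-1} \cap \Go = \varnothing$ for $i \ge 2$ (using $B \subseteq B_1$ and disjointness of the $B_i$), so $a\,1_{B^{-1}}$ has diagonal part exactly $\lambda_1 \kappa\, 1_{r(B)}$ and off-diagonal part supported away from $\Go$. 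Setting $V_0 \coloneqq r(B)$ and cutting down by the unit-space idempotent $1_{V_0}$ on both sides---which is untwisted multiplication by \cref{lemma: characteristic function properties}\cref{item: 1_B 1_D unit space}---I obtain $a_2 \coloneqq 1_{V_0}\,a\,1_{B^{-1}}\,1_{V_0} = \mu\,1_{V_0} + \sum_{j} \nu_j 1_{C_j}$, where $\mu = \lambda_1\kappa \in \F_d^\times$ and the $C_j$ are finitely many compact open bisections with $C_j \cap \Go = \varnothing$ and $r(C_j) \cup s(C_j) \subseteq V_0$.

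The second step uses effectiveness to delete the off-diagonal tail. The key lemma I would isolate is: for finitely many compact open bisections $C_j$ with $C_j \cap \Go = \varnothing$ and any nonempty compact open $V_0 \subseteq \Go$, there is a nonempty compact open $W \subseteq V_0$ with $W C_j W = \varnothing$ for all $j$. Iterating over $j$ (shrinking $V_0$ each time), it suffices to treat a single $C$. Writing $\theta\colon r(C) \to s(C)$ for the partial homeomorphism $r(\gamma) \mapsto s(\gamma)$ induced by the bisection $C$, the set $\{x : \theta(x) = x\}$ equals $r(C \cap \Iso(G))$; effectiveness ($\operatorname{int}(\Iso(G)) = \Go$) forces this set to have empty interior, since otherwise $(r\restr{C})^{-1}$ of its interior would be a nonempty open subset of $\Iso(G)$ disjoint from $\Go$. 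Hence I can choose $x_0 \in V_0$ with $\theta(x_0) \ne x_0$ (or $x_0 \notin r(C)$ if $V_0 \not\subseteq r(C)$) and, using Hausdorffness and the ample basis, a compact open $W \ni x_0$ inside $V_0$ with $W \cap \theta(W) = \varnothing$; this gives $WCW = \varnothing$. With such $W$, multiplication by $1_W$ is untwisted, so $1_W\,1_{C_j}\,1_W = 1_{WC_jW} = 0$ and $1_W\,a_2\,1_W = \mu\,1_W$. Absorbing $\mu^{-1}$ (available since $\F_d$ is a field) into the left factor yields $b,c$ with $bac = 1_W$, completing the argument.

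The main obstacle is the effectiveness lemma of the second step---specifically, the empty-interior argument for the fixed-point set $r(C \cap \Iso(G))$, which is the sole place the effective hypothesis enters and which must be combined with the ample (totally disconnected) structure to extract a genuine compact open $W$. By contrast, the twist $\sigma$ intervenes only in the first step through the factor $\sigma(\gamma,\gamma^{-1})$, and is rendered harmless by the field hypothesis, which lets us invert all scalars and reduce to the untwisted identity $1_W 1_{C_j} 1_W = 1_{WC_jW}$ on the unit space.
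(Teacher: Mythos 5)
Your proof is correct and takes essentially the same route as the paper's: both arguments compress a nonzero element of $\ker(\pi)$, by multiplying by the indicator of an inverted (sub-)bisection on which $\gamma \mapsto \sigma(\gamma,\gamma^{-1})$ is constant, to a function whose unit-space part is a nonzero scalar multiple of a characteristic function, and then invoke effectiveness to produce a nonempty compact open $W \subseteq \Go$ annihilating the off-diagonal part, so that cutting down by $1_W$ and absorbing the invertible scalar forces $\pi(1_W) = 0$. The only divergences are cosmetic or in your favour: the paper cites \cite[Lemma~3.1]{BCFS2014} for the compression step $VHV = \varnothing$, which your fixed-point/empty-interior argument correctly reproves bisection by bisection, and your device of exhibiting $bac = 1_W$ with $\mu^{-1}$ absorbed into $b$ is slightly more scrupulous than the paper's final display, which pulls the scalar $c_{D_0}^{-1}$ through $\pi$ even though $\pi$ is assumed only to be a ring homomorphism.
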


\begin{proof}
It is clear that if $\pi$ is injective, then $\pi(1_V) \ne 0$ for every nonempty compact open subset $V$ of $\Go$. Suppose that $\pi$ is not injective. Then there exists $f \in A_{\F_d}(G,\sigma)$ such that $f \ne 0$ and $\pi(f) = 0$. We aim to find a nonempty compact open subset $V$ of $\Go$ such that $\pi(1_V) = 0$. Since $\sigma$ is locally constant, we can use \cref{lemma: functions in twisted Steinberg algebras}\cref{item: bisection sum} to write $f = \sum_{D \in F} a_D 1_D$, where $F$ is a finite collection of disjoint nonempty compact open bisections of $G$ such that $\sigma(\alpha^{-1},\alpha)$ is constant for all $\alpha \in D$, and $a_D \in \F_d {\setminus} \{0\}$, for each $D \in F$. Let $g \coloneqq 1_{D_0^{-1}} f$ for some $D_0 \in F$. Then $g \in \ker(\pi)$, because $\pi$ is a homomorphism. Fix $\alpha \in D_0$, and define $c_{D_0} \coloneqq \sigma(\alpha^{-1},\alpha) \, a_{D_0} \ne 0$. Then
\begin{equation} \label{eqn: g(s(alpha)) is nonzero}
g(s(\alpha)) = g(\alpha^{-1}\alpha) = \sigma(\alpha^{-1},\alpha) \, 1_{D_0^{-1}}(\alpha^{-1}) \, f(\alpha) = \sigma(\alpha^{-1},\alpha) \, a_{D_0} = c_{D_0} \ne 0.
\end{equation}
Define $g_0\colon G \to \F_d$ by
\[
g_0(\gamma) \coloneqq \begin{cases}
g(\gamma) & \ \text{if } \gamma \in \Go \\
0 & \ \text{if } \gamma \in G {\setminus} \Go.
\end{cases}
\]
Then $g_0 \in C_c(G,\F_d) = A_{\F_d}(G,\sigma)$ by \cref{lemma: functions in twisted Steinberg algebras}\cref{item: functions are C_c and LC}, and $\supp(g_0) = \Go \cap \supp(g)$. Define $H \coloneqq \supp(g - g_0) \subseteq G {\setminus} \Go$. \Cref{eqn: g(s(alpha)) is nonzero} implies that $s(\alpha) \in \supp(g_0)$. Since $G$ is ample and effective, \cite[Lemma~3.1]{BCFS2014} implies that there is a nonempty compact open subset $V$ of $\supp(g_0) \cap s(D_0)$ such that $VHV = \varnothing$. Therefore, since $\supp(1_V (g - g_0) 1_V) \subseteq VHV$, we have $1_V (g - g_0) 1_V = 0$, and hence \cref{eqn: g(s(alpha)) is nonzero} implies that
\begin{equation} \label{eqn: 1_V g 1_V}
1_V \, g \, 1_V = 1_V \, g_0 \, 1_V = c_{D_0} \, 1_V.
\end{equation}
Thus, using that $\pi(g) = 0$, we deduce from \cref{eqn: 1_V g 1_V} that
\[
\pi(1_V) = c_{D_0}^{-1} \, \pi(c_{D_0} \, 1_V) = c_{D_0}^{-1} \, \pi(1_V) \, \pi(g) \, \pi(1_V) = 0,
\]
as required.
\end{proof}

Given a groupoid $G$, we call a subset $U \subseteq \Go$ \hl{invariant} if, for any $\gamma \in G$, we have
\[
s(\gamma) \in U \iff r(\gamma) \in U.
\]
We say that a topological groupoid $G$ is \hl{minimal} if $\Go$ has no nontrivial open invariant subsets. Equivalently, $G$ is minimal if and only if $\overline{s(r^{-1}(x))} = \Go$ for every $x \in \Go$.

\begin{thm} \label{thm: simplicity}
Let $\F_d$ be a discrete field, let $G$ be an effective ample Hausdorff groupoid, and let $\sigma\colon \Gc \to \F_d^\times$ be a continuous $2$-cocycle. Then $A_{\F_d}(G,\sigma)$ is simple if and only if $G$ is minimal.
\end{thm}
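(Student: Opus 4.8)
The plan is to prove the two implications separately, with the Cuntz--Krieger uniqueness theorem (\cref{thm: CK uniqueness}) doing the heavy lifting for the nontrivial direction.

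For the ``only if'' direction I would argue by contrapositive: assuming $G$ is not minimal, I would construct a proper nonzero two-sided ideal. Choose a nonempty, proper, open invariant subset $U \subseteq \Go$ and set
\[
I \coloneqq \{ f \in A_{\F_d}(G,\sigma) : \supp(f) \subseteq s^{-1}(U) \},
\]
noting that invariance gives $s^{-1}(U) = r^{-1}(U)$. Since $\supp(f *_\sigma g) \subseteq \supp(f)\,\supp(g)$, the source of any element of $\supp(f *_\sigma g)$ agrees with the source of the second factor and its range with the range of the first factor; using $s^{-1}(U) = r^{-1}(U)$ this shows $I$ absorbs multiplication on both sides, so $I$ is a two-sided ideal. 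It is nonzero because $U$ is nonempty and $G$ is ample, so $U$ contains a nonempty compact open set $W$ with $1_W \in I$; it is proper because any $x \in \Go \setminus U$ lies in a compact open $V \subseteq \Go$ with $V \not\subseteq U$, whence $1_V \notin I$. This contradicts simplicity.

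For the ``if'' direction, suppose $G$ is minimal and let $I$ be a nonzero ideal; I want $I = A_{\F_d}(G,\sigma)$. Applying \cref{thm: CK uniqueness} to the (noninjective) quotient homomorphism $A_{\F_d}(G,\sigma) \to A_{\F_d}(G,\sigma)/I$ produces a nonempty compact open $V_0 \subseteq \Go$ with $1_{V_0} \in I$. Let $U$ be the union of all compact open subsets $W$ of $\Go$ with $1_W \in I$; then $U$ is open and nonempty. The crux is to show $U$ is invariant. Given $x \in U$ and $\gamma \in G$ with $s(\gamma) = x$, I would pick a compact open $W \ni x$ in $\Go$ with $1_W \in I$ and a compact open bisection $B \ni \gamma$ with $s(B) \subseteq W$; then $1_B 1_W = 1_B$ by \cref{lemma: characteristic function properties}\cref{item: 1_B 1_D unit space}, so $1_B 1_W 1_{B^{-1}} = 1_B 1_{B^{-1}} \in I$. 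A direct computation shows $1_B 1_{B^{-1}}$ is supported on the compact open set $r(B) \ni r(\gamma)$ and takes the \emph{unit} value $\sigma(\alpha,\alpha^{-1})$ at each $r(\alpha)$. Localizing to a compact open $V' \subseteq r(B)$ on which this function is a constant unit $c$, multiplication by $1_{V'}$ gives $c\,1_{V'} \in I$, and since $c \in \F_d^\times$ we conclude $1_{V'} \in I$ with $r(\gamma) \in V'$; thus $r(\gamma) \in U$. Applying the same argument to $\gamma^{-1}$ gives the reverse implication, so $U$ is invariant, and minimality forces $U = \Go$. Finally, for any $f \in A_{\F_d}(G,\sigma)$ the compact sets $r(\supp(f))$ and $s(\supp(f))$ are covered by finitely many compact open $W_i \subseteq \Go$ with $1_{W_i} \in I$; an inclusion--exclusion argument keeps the indicator $1_E$ of their union in $I$, and $1_E$ is a local unit for $f$ (the cocycle is trivial against functions on $\Go$ by normalisation), so $f = 1_E f \in I$ and $I = A_{\F_d}(G,\sigma)$.

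The main obstacle is the invariance step, where the twist genuinely intervenes: in the untwisted setting one has the clean identity $1_B 1_W 1_{B^{-1}} = 1_{BWB^{-1}}$, whereas here the convolution introduces a cocycle factor. The resolution is that this factor is $\F_d^\times$-valued, hence a unit, so after restricting to a set where it is constant it can simply be scaled away; this is exactly where the hypothesis that $\F_d$ is a field is used (together with effectiveness, which enters through \cref{thm: CK uniqueness}). A further point to be careful about is that, since we do not assume $\F_d$ carries a $T$-inverse involution, the involution $f \mapsto f^*$ is unavailable; I would therefore phrase the entire invariance argument using $1_{B^{-1}}$ in place of $1_B^*$, rather than borrowing the cleaner relations of \cref{lemma: characteristic function properties}\cref{item: 1_B range and source}.
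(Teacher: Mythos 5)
Your proof is correct, and its skeleton coincides with the paper's: in the non-minimal direction you construct exactly the paper's ideal (the paper phrases it as $A_{\F_d}\big(G_U,\sigma\restr{G_U^{(2)}}\big)$, which is literally your $\{ f : \supp(f) \subseteq s^{-1}(U) \}$, with the same support computation showing two-sided absorption), and in the minimal direction you invoke \cref{thm: CK uniqueness} applied to the quotient map to obtain $1_{V_0} \in I$. Where you diverge is in how the ideal generated by $1_{V_0}$ is shown to be everything: the paper disposes of this in one sentence by citing the untwisted argument of \cite[Theorem~4.1]{CE-M2015}, justified only by the remark that convolution of characteristic functions of subsets of $\Go$ is untwisted. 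That remark does not literally cover the transport step, since the conjugation $1_B 1_W 1_{B^{-1}}$ used to move indicator functions along orbits involves factors not supported on $\Go$, and, as you correctly observe, it yields the locally constant function $r(\alpha) \mapsto \sigma(\alpha,\alpha^{-1})$ on $r(B)$ (by \cref{lemma: characteristic function properties}\cref{item: 1_B 1_D general}) rather than $1_{r(B)}$. Your localise-and-rescale step -- restricting to a compact open $V' \ni r(\gamma)$ on which this cocycle factor is a constant unit $c$ and then absorbing $c^{-1}$, which is where the field hypothesis enters -- is precisely the adaptation the paper leaves implicit, and your packaging via the invariant open set $U = \bigcup \{ W : 1_W \in I \}$ together with minimality is a clean equivalent of the covering route in the cited untwisted proof (whose inclusion--exclusion step you still use at the end to produce the local unit $1_E$, where normalisation of $\sigma$ makes $1_E f = f$). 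Your closing caution is also on target: since no $T$-inverse involution is assumed, $f \mapsto f^*$ is unavailable, and both \cref{thm: CK uniqueness} and this argument are purely ring-theoretic, so working with $1_{B^{-1}}$ in place of $1_B^*$ is the right move. In short: same approach as the paper, but with the delegated step carried out in full, making your version the more self-contained of the two.
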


\begin{proof}
Suppose that $G$ is minimal, and let $I$ be a nonzero ideal of $A_{\F_d}(G,\sigma)$. Then $I$ is the kernel of some noninjective ring homomorphism of $A_{\F_d}(G,\sigma)$, and so \cref{thm: CK uniqueness} implies that there is a compact open subset $V \subseteq \Go$ such that $1_V \in I$. We claim that the ideal generated by $1_V$ is the whole of $A_{\F_d}(G,\sigma)$. Since the twisted convolution product of characteristic functions on the unit space is the same as the untwisted convolution product, the proof follows directly from the arguments used in the proof of \cite[Theorem~4.1]{CE-M2015}.

For the converse, suppose that $G$ is not minimal. Then there exists a nonempty open invariant subset $U \subsetneq \Go$. The set
\[
G_U \coloneqq s^{-1}(U) = \{ \gamma \in G : s(\gamma) \in U\} = \{\gamma \in G : r(\gamma) \in U \}
\]
is a proper open subgroupoid of $G$, and so we can view $I \coloneqq A_{\F_d}\big(G_U,\sigma\restr{G_U^{(2)}}\big)$ as a proper subset of $A_{\F_d}(G,\sigma)$. Since $U$ is a nonempty open set and $G$ is ample, we can find a nonempty compact open bisection $B$ of $G$ contained in $U$, and thus $I \ne \{0\}$, because $1_B \in I$. We claim that $I$ is an ideal of $A_{\F_d}(G,\sigma)$. Since the vector-space operations are defined pointwise, it is straightforward to check that $I$ is a subspace. To see that $I$ is an ideal, fix $f \in I$ and $g \in A_{\F_d}(G,\sigma)$. Since $U$ is invariant, we have
\[
\supp(fg) \subseteq \supp(f) \, \supp(g) \subseteq G_U \, G \subseteq G_U,
\]
and so $fg \in I$. Similarly, $gf \in I$, and thus $I$ is an ideal. (In fact, if $A_{\F_d}(G,\sigma)$ is a $*$-algebra, then $I$ is a $*$-ideal.)
\end{proof}

\begin{remark}
By \cite[Theorem~4.1]{BCFS2014}, the untwisted complex Steinberg algebra $A(G)$ is simple if and only if $G$ is minimal and effective. Note that \cref{thm: simplicity} does not give necessary and sufficient conditions on $G$ and $\sigma$ for simplicity of twisted Steinberg algebras. This is a hard problem. We expect, as in the C*-setting of \cite[Remark~8.3]{KPS2015}, that there exist simple twisted Steinberg algebras for which the groupoid $G$ is not effective.
\end{remark}

\section{Gradings and a graded uniqueness theorem} \label{section: gradings}

In this section we describe the graded structure that twisted Steinberg algebras inherit from the underlying groupoid, and we prove a graded uniqueness theorem. The arguments are similar to those used in the untwisted setting (see \cite{CE-M2015}). Let $\Gamma$ be a discrete group, and suppose that $c\colon G \to \Gamma$ is a continuous groupoid homomorphism (or \hl{$1$-cocycle}). Then we call $G$ a \hl{$\Gamma$-graded groupoid}, and we define $G_\gamma \coloneqq c^{-1}(\gamma)$ for each $\gamma \in \Gamma$. Since $c$ is continuous and $\Gamma$ is discrete, each $G_\gamma$ is clopen. Since $c$ is a homomorphism, we have
\[
G_\gamma^{-1} = G_{\gamma^{-1}} \quad \text{ and } \quad G_\zeta \, G_\eta \subseteq G_{\zeta\eta}
\]
for all $\gamma, \zeta, \eta \in \Gamma$. Note that all groupoids are graded with respect to the groupoid homomorphism into the trivial group.

\begin{prop} \label{prop: graded}
Let $G$ be an ample Hausdorff groupoid, and let $\sigma\colon \Gc \to R^\times$ be a continuous $2$-cocycle. Suppose that $\Gamma$ is a discrete group and $c\colon G \to \Gamma$ is a continuous groupoid homomorphism. For each $\gamma \in \Gamma$, define the set of \hl{homogeneous elements of degree $\gamma$} by
\[
A_R(G,\sigma)_{\gamma} \coloneqq \{ f \in A_R(G,\sigma) \,:\, \supp(f) \subseteq G_{\gamma} \}.
\]
Then $A_R(G,\sigma)$ is a $\Gamma$-graded algebra.
\end{prop}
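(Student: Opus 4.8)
The plan is to verify the two defining properties of a $\Gamma$-grading on the $R$-algebra $A_R(G,\sigma)$: first, that it decomposes as an internal direct sum $\bigoplus_{\gamma \in \Gamma} A_R(G,\sigma)_\gamma$ of $R$-submodules, and second, that this decomposition is compatible with the twisted convolution, in the sense that $A_R(G,\sigma)_\zeta \, A_R(G,\sigma)_\eta \subseteq A_R(G,\sigma)_{\zeta\eta}$ for all $\zeta, \eta \in \Gamma$. The $2$-cocycle $\sigma$ will play no role beyond its appearance in the definition of convolution, since the grading is determined entirely by supports.

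First I would record that each $A_R(G,\sigma)_\gamma$ is an $R$-submodule: if $f, g \in A_R(G,\sigma)$ are supported in the clopen set $G_\gamma$ and $\lambda \in R$, then $\supp(\lambda f + g) \subseteq G_\gamma$, so $\lambda f + g \in A_R(G,\sigma)_\gamma$. For the direct-sum decomposition, given $f \in A_R(G,\sigma)$ and $\gamma \in \Gamma$, I would set $f_\gamma \coloneqq f \cdot 1_{G_\gamma}$ (pointwise product), which is the restriction of $f$ to the fibre $G_\gamma$ extended by zero. Since $c$ is continuous and $\Gamma$ is discrete, $G_\gamma$ is clopen, so $f_\gamma$ is again locally constant with $\supp(f_\gamma) = \supp(f) \medcap G_\gamma$; hence $f_\gamma \in A_R(G,\sigma)_\gamma$ by \cref{lemma: functions in twisted Steinberg algebras}\cref{item: functions are C_c and LC}. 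Because $\supp(f)$ is compact and the fibres $\{G_\gamma\}_{\gamma \in \Gamma}$ partition $G$ into pairwise disjoint open sets, only finitely many of the sets $\supp(f) \medcap G_\gamma$ are nonempty, so $f = \sum_{\gamma \in \Gamma} f_\gamma$ is a finite sum, proving that the homogeneous components span. The sum is direct because the $G_\gamma$ are pairwise disjoint: if $\sum_{\gamma} f_\gamma = 0$ with $\supp(f_\gamma) \subseteq G_\gamma$, then evaluating at any point of $G_\delta$ leaves only the term $f_\delta$, forcing every $f_\delta = 0$.

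Finally, for multiplicative compatibility I would fix $f \in A_R(G,\sigma)_\zeta$ and $g \in A_R(G,\sigma)_\eta$, so that $\supp(f) \subseteq G_\zeta$ and $\supp(g) \subseteq G_\eta$. By the remarks following \cref{prop: twisted Steinberg algebra}, we have $\supp(f *_\sigma g) \subseteq \supp(f)\,\supp(g) \subseteq G_\zeta \, G_\eta$, and since $c$ is a groupoid homomorphism we have $G_\zeta G_\eta \subseteq G_{\zeta\eta}$. Thus $\supp(f *_\sigma g) \subseteq G_{\zeta\eta}$, giving $f *_\sigma g \in A_R(G,\sigma)_{\zeta\eta}$. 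Combined with the direct-sum decomposition, this exhibits $A_R(G,\sigma)$ as a $\Gamma$-graded $R$-algebra.

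I do not expect a serious obstacle here; the argument is essentially the same as in the untwisted setting of \cite{CE-M2015}. The only points requiring care are that each $f_\gamma$ genuinely lies in $A_R(G,\sigma)$ — which relies on $G_\gamma$ being clopen so that $f_\gamma$ remains locally constant and compactly supported — and that the decomposition $f = \sum_\gamma f_\gamma$ is a \emph{finite} sum, which is exactly where compactness of $\supp(f)$ is used.
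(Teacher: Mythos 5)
Your proof is correct and follows essentially the same route as the paper's: the paper likewise verifies the submodule property, obtains the decomposition into homogeneous components (by citing the argument of \cite[Lemma~2.2]{CE-M2015} rather than writing out the pointwise cut-off $f \cdot 1_{G_\gamma}$ as you do), deduces directness of the sum from disjointness of the supports, and proves multiplicative compatibility via $\supp(f *_\sigma g) \subseteq \supp(f)\,\supp(g) \subseteq G_\zeta\, G_\eta \subseteq G_{\zeta\eta}$. Your only departure is making the decomposition explicit and self-contained, which is a harmless expansion of the same argument.
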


\begin{proof} It is clear that $A_R(G,\sigma)_\gamma$ is an $R$-submodule of $A_R(G,\sigma)$, for each $\gamma \in \Gamma$. Since $A_R(G,\sigma)$ and $A_R(G)$ agree as $R$-modules, the argument used in the proof of \cite[Lemma~2.2]{CE-M2015} can be used to show that every $f \in A_R(G,\sigma)$ can be expressed as an $R$-linear combination of homogeneous elements. Thus, to see that
\[
A_R(G,\sigma) = \bigoplus_{\gamma \in \Gamma} A_R(G,\sigma)_{\gamma},
\]
it suffices to show that any finite collection
\[
\{ f_i \in A_R(G,\sigma)_{\gamma_i} \,:\, 1 \le i \le n, \text{ and each $\gamma_i$ is distinct from the others} \}
\]
is linearly independent. But this is clear, because $\supp(f_i) \cap \supp(f_j) = \varnothing$ when $i \ne j$. Fix $\zeta, \eta \in \Gamma$. For all $f \in A_R(G,\sigma)_\zeta$ and $g \in A_R(G,\sigma)_\eta$, we have
\[
\supp(fg) \subseteq \supp(f) \, \supp(g) \subseteq G_\zeta \, G_\eta \subseteq G_{\zeta\eta},
\]
and hence
\[
A_R(G,\sigma)_\zeta \, A_R(G,\sigma)_\eta \subseteq A_R(G,\sigma)_{\zeta\eta}. \qedhere
\]
\end{proof}

As in the untwisted setting \cite[Theorem~3.4]{CE-M2015}, the graded uniqueness theorem follows from the Cuntz--Krieger uniqueness theorem. Note that if $e$ is the identity of $\Gamma$, then $G_e$ is a clopen subgroupoid of $G$, and so we can identify $A_R(G,\sigma)_e$ with $A_R(G_e,\sigma)$, just as we can identify $A_R(G_e)$ with $A_R(G)_e$.

\begin{thm}[Graded uniqueness theorem] \label{thm: graded uniqueness}
Let $\F_d$ be a discrete field, let $G$ be an ample Hausdorff groupoid, and let $\sigma\colon \Gc \to \F_d^\times$ be a continuous $2$-cocycle. Let $\Gamma$ be a discrete group with identity $e$, and suppose that $c\colon G \to \Gamma$ is a continuous groupoid homomorphism such that the subgroupoid $G_e$ is effective. Suppose that $Q$ is a $\Gamma$-graded ring and that $\pi\colon A_{\F_d}(G,\sigma) \to Q$ is a graded ring homomorphism. Then $\pi$ is injective if and only if $\pi(1_K) \ne 0$ for every nonempty compact open subset $K$ of $\Go$.
\end{thm}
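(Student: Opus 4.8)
The plan is to follow the standard template for graded uniqueness theorems, deducing the result from the Cuntz--Krieger uniqueness theorem (\cref{thm: CK uniqueness}) applied to the effective subgroupoid $G_e$. The forward implication is immediate: if $\pi$ is injective then $\pi(1_K) \neq 0$ whenever $1_K \neq 0$, and $1_K \neq 0$ for every nonempty compact open $K \subseteq \Go$. For the converse, assume $\pi(1_K) \neq 0$ for all such $K$, and suppose toward a contradiction that $\pi(f) = 0$ for some nonzero $f$. Using \cref{prop: graded}, write $f = \sum_{\gamma} f_\gamma$ with $f_\gamma \in A_{\F_d}(G,\sigma)_\gamma$ and only finitely many terms nonzero. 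Since $\pi$ is a graded homomorphism, $\pi(f_\gamma) \in Q_\gamma$, and the directness of the grading on $Q$ forces $\pi(f_\gamma) = 0$ for every $\gamma$. As some $f_\gamma \neq 0$, it suffices to prove that $\pi$ is injective on each homogeneous component; that is, I would reduce to the case where $f$ is homogeneous of some degree $\gamma$, nonzero, with $\pi(f) = 0$.

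The core step is to manufacture from such an $f$ a nonzero element of the identity component lying in $\ker \pi$. Using \cref{lemma: functions in twisted Steinberg algebras}, write $f = \sum_{D \in F} a_D 1_D$ with $F$ a finite family of mutually disjoint nonempty compact open bisections contained in $G_\gamma = c^{-1}(\gamma)$, each $a_D \in \F_d \setminus \{0\}$, and (after refining) $\sigma(\alpha^{-1},\alpha)$ constant on each $D$. Fix $D_0 \in F$ and set $h \coloneqq 1_{D_0^{-1}} f$. Since $\pi$ is multiplicative, $\pi(h) = \pi(1_{D_0^{-1}}) \, \pi(f) = 0$. Because $D_0^{-1} \subseteq G_{\gamma^{-1}}$ and $\supp(f) \subseteq G_\gamma$, we have $\supp(h) \subseteq G_{\gamma^{-1}} G_\gamma \subseteq G_e$, so $h \in A_{\F_d}(G,\sigma)_e$. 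Exactly as in the computation \cref{eqn: g(s(alpha)) is nonzero} from the proof of \cref{thm: CK uniqueness}, for $\alpha \in D_0$ one has $h(s(\alpha)) = \sigma(\alpha^{-1},\alpha) \, a_{D_0} \neq 0$, so $h$ is a nonzero homogeneous element of degree $e$.

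Finally, I would invoke \cref{thm: CK uniqueness} for the groupoid $G_e$. Identifying $A_{\F_d}(G,\sigma)_e$ with $A_{\F_d}\big(G_e, \sigma\restr{G_e^{(2)}}\big)$ as in the remark preceding the statement, the restriction $\pi\restr{A_{\F_d}(G,\sigma)_e}$ is a ring homomorphism out of the twisted Steinberg algebra of the effective ample Hausdorff groupoid $G_e$. Since $(G_e)^{(0)} = \Go$ (a groupoid homomorphism sends each unit $x = xx$ to an idempotent, hence to $e$), every nonempty compact open subset of $(G_e)^{(0)}$ is a $K \subseteq \Go$, so the hypothesis yields $\pi(1_K) \neq 0$ for all of them. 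Hence \cref{thm: CK uniqueness} shows this restriction is injective, contradicting $h \neq 0$ and $\pi(h) = 0$, and so $\pi$ is injective. I expect no deep obstacle here, since the essential mathematical content is carried entirely by \cref{thm: CK uniqueness}; the delicate points are bookkeeping ones: confirming that the grading-component identification is a ring isomorphism carrying the twisted convolution to the $\sigma\restr{G_e^{(2)}}$-twisted convolution, and checking that refining the bisections to make $\sigma(\alpha^{-1},\alpha)$ locally constant preserves the degree-$\gamma$ support condition (it does, as refinement stays inside $G_\gamma$).
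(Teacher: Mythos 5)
Your proposal is correct and follows essentially the same route as the paper's proof: decompose a kernel element into homogeneous components, use gradedness of $\pi$ and directness of the grading on $Q$ to conclude each component lies in $\ker\pi$, multiply a nonzero component of degree $\gamma$ on the left by $1_{D_0^{-1}}$ for a compact open bisection $D_0 \subseteq G_\gamma$ to produce a nonzero element of $A_{\F_d}(G_e,\sigma) \cap \ker\pi$, and then apply \cref{thm: CK uniqueness} to the effective subgroupoid $G_e$ via the identification of $A_{\F_d}(G,\sigma)_e$ with $A_{\F_d}\big(G_e,\sigma\restr{G_e^{(2)}}\big)$. The only (cosmetic) difference is that the paper selects a single bisection $B \subseteq G_\gamma$ on which the homogeneous component is constant rather than invoking the full disjoint-bisection decomposition of \cref{lemma: functions in twisted Steinberg algebras}; the evaluation $h(s(\alpha)) = \sigma(\alpha^{-1},\alpha)\,a_{D_0} \ne 0$ and the final appeal to \cref{thm: CK uniqueness} are identical.
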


\begin{proof}
It is clear that if $\pi$ is injective, then $\pi(1_K) \ne 0$ for every nonempty compact open subset $K$ of $\Go$. Suppose that $\pi$ is not injective. We claim that there exists $f \in A_{\F_d}(G_e,\sigma)$ such that $f \ne 0$ and $\pi(f) = 0$. To see this, fix $g \in \ker(\pi)$ such that $g \ne 0$. By the proof of \cref{prop: graded}, $g$ can be expressed as a finite sum of homogeneous elements; that is, $g = \sum_{\gamma \in F} g_{\gamma}$, where $F$ is a finite subset of $\Gamma$, and $g_{\gamma} \in A_{\F_d}(G,\sigma)_{\gamma}$ for each $\gamma \in F$. Thus,
\[
\sum_{\gamma \in F} \pi(g_{\gamma}) = \pi\Big(\sum_{\gamma \in F} g_{\gamma}\Big) = \pi(g) = 0.
\]
Since $\pi$ is graded, we have $\pi(g_{\gamma}) \in Q_\gamma$ for each $\gamma \in \Gamma$. Thus $\pi(g_{\gamma}) = 0$ for each $\gamma \in \Gamma$, because elements of different graded subspaces of $Q$ are linearly independent. Since $g \ne 0$, we can choose $\gamma \in F$ such that $g_{\gamma} \ne 0$. Since $g_{\gamma}$ is locally constant and $G_\gamma$ is open, there exists a compact open bisection $B \subseteq G_{\gamma}$ such that $g_{\gamma}(B) = \{k\}$, for some $k \in \F_d {\setminus} \{0\}$. Define $f \coloneqq 1_{B^{-1}} \, g_{\gamma}$. Since $\pi$ is a homomorphism and $G$ is graded, we have $f \in A_{\F_d}(G_e,\sigma) \cap \ker(\pi)$. For all $\alpha \in B$, we have
\[
f(s(\alpha)) = f(\alpha^{-1}\alpha) = \sigma(\alpha^{-1},\alpha) \, 1_{B^{-1}}(\alpha^{-1}) \, g_{\gamma}(\alpha) = \sigma(\alpha,\alpha^{-1}) \, k \ne 0,
\]
and hence $f \ne 0$. Thus the restriction $\pi_e$ of $\pi$ to $A_{\F_d}(G_e,\sigma)$ is not injective.

Since $\Go \subseteq G_e$ and we have assumed that the groupoid $G_e$ is effective, we can apply \cref{thm: CK uniqueness} to the restricted homomorphism $\pi_e$ to obtain a nonempty compact open subset $K \subseteq \Go$ such that $\pi(1_K) = 0$, as required.
\end{proof}

\vspace{4ex}

\begin{acknowledgements}
This research collaboration began as part of the project-oriented workshop ``Women in Operator Algebras'' (18w5168) in November 2018, which was funded and hosted by the Banff International Research Station. The attendance of the first-named author at this workshop was supported by an AustMS WIMSIG Cheryl E.\ Praeger Travel Award, and the attendance of the third-named author was supported by SFB 878 Groups, Geometry \& Actions. The research was also funded by the Australian Research Council grant DP170101821, and by the Deutsche Forschungsgemeinschaft (DFG, German Research Foundation) under Germany's Excellence Strategy -- EXC 2044 -- 390685587, Mathematics M\"{u}nster -- Dynamics -- Geometry -- Structure, and under SFB 878 Groups, Geometry \& Actions. The authors would like to thank Gilles de Castro, Aidan Sims, and Ben Steinberg for helpful feedback on the first version of this paper.
\end{acknowledgements}

\vspace{2ex}

\bibliographystyle{amsplain}
\makeatletter\renewcommand\@biblabel[1]{[#1]}\makeatother
\bibliography{ACCLMR_references}

\newpage
\end{document}